\documentclass[letterpaper,psamsfonts,12pt]{article}
\usepackage{pinlabel}
\usepackage{graphicx}
\usepackage{subfigure}
\usepackage[small,bf]{caption}
\usepackage{amsfonts}
\usepackage{amssymb}
\usepackage[all,arc]{xy}
\usepackage{units}
\usepackage{mathrsfs} 
\usepackage{amsmath}
\usepackage{amsthm}
\usepackage{amscd}
\usepackage{verbatim}
\usepackage{fullpage}
\usepackage{cite}
\usepackage{leftidx}
\usepackage{marginnote}
\usepackage{ifthen}
\usepackage{colonequals}
\usepackage{hyperref}


\theoremstyle{plain}
\newtheorem{thm}{Theorem}[section]
\newtheorem{cor}[thm]{Corollary}
\newtheorem{prop}[thm]{Proposition}
\newtheorem{lem}[thm]{Lemma}
\newtheorem{claim}[thm]{Claim}
\newtheorem*{claim*}{Claim}

\newtheorem{mques}[thm]{Motivating Question}
\newtheorem*{fact*}{Fact}
\newtheorem{mythm}[thm]{Theorem}
\newtheorem{thmref}{Theorem} 

\theoremstyle{definition}
\newtheorem{defn}[thm]{Definition}

\newtheorem{exmp}[thm]{Example}

\newtheorem{notn}[thm]{Notation}

\newtheorem{assm}[thm]{Assumptions}
\newtheorem{rem}[thm]{Remark}
\newtheorem*{rem*}{Remark}
\newtheorem{obs}[thm]{Observation}


\newcommand{\abs}[1]{\left\vert#1\right\vert}

\newcommand{\ceil}[1]{\left\lceil#1\right\rceil}

\newcommand{\R}{\mathbb {R}}

\newcommand{\Hyp}{\mathbb {H}}
\newcommand{\hyp}{\Hyp^2}

\newcommand{\Z}{\mathbb {Z}}

\newcommand{\inv}{^{-1}}
\newcommand{\eps}{\varepsilon}
\newcommand{\push}{\mathcal{P}} 
\newcommand{\card}[1]{\abs{#1}}

\DeclareMathOperator{\Mod}{Mod}

\DeclareMathOperator{\pp}{PP}
\DeclareMathOperator{\spec}{spec}
\DeclareMathOperator{\least}{L}
\DeclareMathOperator{\intnum}{i}

\newcommand{\pushdil}[1]{\lambda_{#1}}
\newcommand{\dilat}[1]{\lambda_{#1}}

\newcommand{\lpath}[2]{\leftidx{_{#1}}{#2}}
\newcommand{\liftgamma}{\Gamma}
\newcommand{\marked}{\mathcal{M}}
\newcommand{\baseloop}{\alpha}
\newcommand{\baselift}{\baseloop_{0}}
\newcommand{\basepoint}{p_0}
\newcommand{\turns}{\mathcal{W}}
\newcommand{\turnat}[1]{\tau_{#1}}

\newcommand{\bedge}{\varepsilon_0}
\newcommand{\liftimage}[1]{\baseloop_{#1}}
\newcommand{\liftclass}[2]{[\baseloop_{#1}]_{#2}}
\newcommand{\Gstab}{G_{\bedge}}
\newcommand{\pants}{\mathcal{C}}
\newcommand{\seam}{c_i}
\newcommand{\liftpants}{\tilde{\pants}}
\newcommand{\tree}{\mathcal{T}}
\newcommand{\vertices}{V(\tree)}
\newcommand{\edges}{E(\tree)}
\newcommand{\proj}{\sigma}
\newcommand{\len}{l_{\pants}}

\newcommand{\cone}[1]{V_{#1}}
\newcommand{\weights}[1]{E_{#1}}
\newcommand{\MFS}{\mathscr{M\!F}}

\newcommand{\pretrack}[1]{\tau_{#1}}
\newcommand{\itime}[2]{t_{#1#2}}

\newcommand{\drag}{d}
\newcommand{\lside}{l}
\newcommand{\rside}{r}
\newcommand{\sidecor}{c}
\newcommand{\cross}{a}
\newcommand{\outcor}{b}
\newcommand{\ewt}[1]{
\ifthenelse{\equal{#1}{1}}{a}{}
\ifthenelse{\equal{#1}{2}}{b}{}
\ifthenelse{\equal{#1}{3}}{c}{}
\ifthenelse{\equal{#1}{4}}{e}{}
\ifthenelse{\equal{#1}{5}}{f}{}
\ifthenelse{\equal{#1}{6}}{g}{}
\ifthenelse{\equal{#1}{7}}{h}{}
\ifthenelse{\equal{#1}{8}}{i}{}
\ifthenelse{\equal{#1}{9}}{j}{}
\ifthenelse{\equal{#1}{10}}{u}{}
\ifthenelse{\equal{#1}{11}}{w}{}
\ifthenelse{\equal{#1}{12}}{x}{}
\ifthenelse{\equal{#1}{13}}{y}{}
\ifthenelse{\equal{#1}{14}}{z}{}
}


\renewcommand{\thefootnote}{\fnsymbol{footnote}}

\makeatletter
\let\c@equation\c@thm
\makeatother
\numberwithin{equation}{section}

\title{Dilatation versus self-intersection number for point-pushing pseudo-Anosov homeomorphisms}

\author{Spencer Dowdall}

\date{\today}

\begin{document}

\maketitle

\begin{abstract}
A filling curve $\gamma$ on a based surface $S$ determines a pseudo-Anosov homeomorphism $\push(\gamma)$ of $S$ via the process of ``point-pushing along $\gamma$.'' We consider the relationship between the self-intersection number $\intnum(\gamma)$ of $\gamma$ and the dilatation $\pushdil{\gamma}$ of $\push(\gamma)$; our main result is that $\left(\intnum(\gamma)+1\right)^{\nicefrac{1}{5}} \leq \pushdil{\gamma} \leq 9^{\intnum(\gamma)}$. We also bound the least dilatation of any pseudo-Anosov in the point-pushing subgroup of a closed surface and prove that this number tends to infinity with genus. Lastly, we investigate the minimal entropy of any pseudo-Anosov homeomorphism obtained by pushing along a curve with self-intersection number $k$ and show that, for a closed surface, this number grows like $\log(k)$.\footnotetext{2000 {\normalfont\itshape
  Mathematics Subject Classification\/} 37E30 (primary) 37D20, 37B40, 57M99 (secondary).}
\end{abstract}

\renewcommand{\thefootnote}{\arabic{footnote}}
\section{Introduction}\label{sec:introduction}

In this paper we consider the entropy generated by ``stirring'' a surface $S$ in the following manner. Place a finger on a point $p\in S$ and smoothly deform the surface by pushing $p$ along a closed path $\gamma\colon [0,1]\to S$; the resulting homeomorphism of $S$ mixes the surface just as one stirs a pot of soup. Certainly the amount of entropy introduced in this way depends entirely upon the stirring pattern: pushing along a simple path will have little effect, whereas following a complicated path that winds all over the surface will mix things up in short order. The goal of this paper is to understand \emph{how} the entropy depends on the pushing path.

\subsubsection*{Point-pushing homeomorphisms}

Throughout, $S=S_{g,n}$ will denote the surface obtained from a closed, connected, orientable surface of genus $g$ by removing $n\geq 0$ points or punctures. The \emph{mapping class group} of $S$ is the group $\Mod(S)$ of isotopy classes of orientation-preserving homeomorphisms of $S$. By a \emph{marked} or \emph{based} surface $(S,p)$, we simply mean a surface $S$ with basepoint $p\in S$; the corresponding \emph{based mapping class group} $\Mod(S,p)$ is defined analogously by restricting to homeomorphisms and isotopies of the pair $(S,p)$. These two groups are related by the \emph{Birman exact sequence} \cite[\S4.1]{Birman74}:
\begin{equation*}
\xymatrix{
1 \ar[r] & \pi_1(S,p) \ar[r]^{\push} & \Mod(S,p) \ar[r]^{\text{forget}} & \Mod(S) \ar[r] & 1
}
\end{equation*}
The first map $\push$ constitutes the stirring procedure mentioned above. It is called the \emph{point-pushing homomorphism} and is constructed by pushing the basepoint $p$ around a closed curve $\gamma\in\pi_1(S,p)$; see \S\ref{point-pushingDef} for details. Representatives of the resulting class $\push(\gamma)$ are called \emph{point-pushing homeomorphisms}.

The second map is obtained by simply forgetting the basepoint. Thus the image of $\push$ consists precisely of those mapping classes in $\Mod(S,p)$ that become trivial when one allows isotopies to move the basepoint. This subgroup is called the \emph{point-pushing subgroup} of $\Mod(S,p)$ and will be denoted by
\begin{equation*}
\pp(S) \colonequals \push(\pi_1(S,p)) \leq \Mod(S,p).
\end{equation*}

\subsubsection*{Pseudo-Anosov dilatation}
A mapping class $f\in \Mod(S,p)$ exhibits the mixing behavior that interests us precisely if it is \emph{pseudo-Anosov}, meaning that it has a representative homeomorphism which respectively stretches and contracts a transverse pair of measured foliations on $(S,p)$ by some stretching factor $\dilat{f}>1$; see \cite{FLP79} or \cite{FarbMargalit}. This stretching factor $\dilat{f}$ is called the \emph{dilatation} of $f$; it is an algebraic integer that is an important measure of the dynamical properties of $f$. For instance,

\begin{itemize}
\item $\log(\dilat{f})$ is the minimal topological entropy of any representative homeomorphism in the mapping class of $f$,
\item $\log(\dilat{f})$ is the translation length of the isometric action of $f$ on the Teichm\"uller space of $S\setminus\{p\}$ equipped with the Teichm\"uller metric; thus $\log(\dilat{f})$ also represents the length of the geodesic loop corresponding to $[f]$ in the moduli space of hyperbolic structures on $S\setminus\{p\}$,
\item for any simple closed curve $\alpha\subset S\setminus\{p\}$ and any Riemannian metric $g$ on $S\setminus\{p\}$, the length of $f^k(\alpha)$ grows like $\dilat{f}^k$; more precisely, $\sqrt[k]{l_g(f^k(\alpha))} \to \dilat{f}$ for all $\alpha$ and $g$.
\end{itemize}

According to the Nielsen--Thurston classification \cite{Thurston88,Bers78}, $f\in \Mod(S,p)$ is pseudo-Anosov if and only if no iterate of $f$ fixes the isotopy class of any essential simple closed curve on $(S,p)$. Here, a simple closed curve $\alpha\subset S$ is \emph{essential} if it is neither homotopically trivial nor homotopic into every neighborhood of a puncture; the \emph{(essential) simple closed curves on $(S,p)$} are the same as for $S\setminus\{p\}$. We refer the reader to \cite{FLP79} or \cite{FarbMargalit} for a more thorough discussion of the basic properties of pseudo-Anosov mapping classes.

\subsubsection*{Dilatations in $\pp(S)$}
In the context of the point-pushing subgroup, there is a simple criterion, due to Kra, that determines whether a pushing loop defines a pseudo-Anosov mapping class. Recall that a closed curve $\gamma\subset S$ \emph{fills} $S$ if every loop that is freely homotopic to $\gamma$ intersects every essential simple closed curve in $S$. The following is taken from \cite[Theorem 2']{Kra81}.

\begin{thm}[Kra]\label{thm:Kra}
Let $S = S_{g,n}$ be an orientable surface satisfying $3g + n > 3$, and let $\gamma\in \pi_1(S,p)$ be a closed curve on $S$. Then the mapping class $\push(\gamma)\in\Mod(S,p)$ is pseudo-Anosov if and only if $\gamma$ fills $S$.
\end{thm}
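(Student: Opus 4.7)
The plan is to leverage the Nielsen--Thurston classification, according to which $\push(\gamma)$ is pseudo-Anosov if and only if it has infinite order and no positive power preserves the isotopy class of an essential simple closed curve in $S\setminus\{p\}$.

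Infinite order follows immediately from the hypothesis $3g+n>3$: the Birman exact sequence then ensures $\push$ is injective, and $\pi_1(S,p)$ is torsion-free, so $\push(\gamma)$ has infinite order unless $\gamma=1$ (in which case $\gamma$ obviously does not fill).

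The main content is therefore to equate reducibility of $\push(\gamma)$ with non-filling of $\gamma$. My plan is to prove the dictionary
\[
\text{some power } \push(\gamma)^k \text{ fixes an essential } [\alpha] \iff \gamma \text{ does not fill } S,
\]
from which both directions of Kra's theorem follow. For the ``$\Leftarrow$'' implication, if $\gamma$ is freely homotopic off some essential simple closed curve $\alpha$, then (possibly after passing to a $\pi_1(S,p)$-conjugate of $\gamma$, and correspondingly to a $\Mod(S,p)$-conjugate of $\push(\gamma)$, to handle the case where the off-$\alpha$ representative lies in a different complementary component from $p$) we may represent $\push(\gamma)$ by a homeomorphism supported in a tubular neighborhood of a free representative of $\gamma$ that is disjoint from $\alpha$. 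The boundary of a sufficiently thickened such neighborhood consists of essential curves preserved by this representative, supplying the desired reducing system.

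The ``$\Rightarrow$'' direction is the main obstacle. Assume $\push(\gamma^k)=\push(\gamma)^k$ fixes $[\alpha]$ for some essential simple closed curve $\alpha$. My plan is to work in the annular cover $\tilde S_\alpha\to S$ associated to $\langle\alpha\rangle$, in which $\alpha$ lifts to a unique simple closed core curve $\tilde\alpha$. An isotopy witnessing this preserved class produces a continuous family $\{\alpha_t\}$ of embedded circles, with $\alpha_t\subset S\setminus\{\gamma^k(t)\}$, running from $\alpha$ back to itself; lifting this family alongside the pushing path $\gamma^k$ to $\tilde S_\alpha$, the requirement that every $\alpha_t$ stays isotopic to $\alpha$ in $S\setminus\{\gamma^k(t)\}$ should force the endpoint of the lifted $\gamma^k$ to lie in the same complementary region of $\tilde\alpha$ as its starting point. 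This in turn translates to $\gamma^k$ being freely homotopic off $\alpha$, and since geometric intersection number satisfies $\intnum(\gamma^k,\alpha) = |k|\cdot\intnum(\gamma,\alpha)$, we obtain $\intnum(\gamma,\alpha)=0$, so $\gamma$ fails to fill $S$. The delicate piece of this covering-space argument --- and where I anticipate the technical heart of the proof to lie --- is rigorously controlling the monodromy of the lifted family $\{\alpha_t\}$, ensuring that the lifts track coherently and that ``same side'' of $\tilde\alpha$ is correctly identified, since this is precisely the step where the isotopy-preservation hypothesis is brought to bear.
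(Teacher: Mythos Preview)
The paper does not itself prove Kra's theorem: it attributes the statement to Kra and refers the reader to Farb--Margalit for a proof. That argument, which the paper's \S\ref{sec:lowerBound} elaborates quantitatively, works in the \emph{universal} cover rather than the annular cover: when $\gamma$ fills, one tracks how the lifts of the marked point push a fixed lift of $\alpha$ and shows directly that intersection numbers with a reference curve must grow, so no isotopy class can be fixed.

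Your Nielsen--Thurston framework and the easy direction are fine, but the annular-cover argument has a genuine gap---and not where you think. The step you flag as delicate (that $\tilde p_0$ and $\tilde p_1$ land on the same side of the core $\tilde\alpha$) is actually not the hard part: the moving family $\tilde\alpha_t$ separates $\tilde S_\alpha$ at each instant, the lifted marked point never meets it, and continuity together with $\tilde\alpha_1=\tilde\alpha_0$ forces the sides to match. The real problem is the next sentence: ``same side of $\tilde\alpha$'' does \emph{not} imply $\intnum(\gamma^k,\alpha)=0$. The core $\tilde\alpha$ is only one component of the full preimage of $\alpha$ in $\tilde S_\alpha$; the others are bi-infinite arcs, and the geometric intersection number counts essential crossings of the path-lift with \emph{all} of them. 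A loop $\gamma$ whose axis in the universal cover misses the particular lift lying over $\tilde\alpha$ but hits a different $\pi_1(S)$--translate already gives endpoints on the same side of $\tilde\alpha$ while having $\intnum(\gamma,\alpha)>0$. You also do not address the case (arising when $n\geq 1$) in which the fixed curve $\alpha$ is essential in $(S,p)$ but puncture-parallel in $S$, for instance when $\alpha$ bounds a disk containing $p$ and one puncture: then $\intnum(\gamma,\alpha)=0$ holds automatically for every $\gamma$ and cannot witness that $\gamma$ fails to fill, so this case needs a separate argument.
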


It is clear from the definition that $\gamma$ must fill $S$ in order for $\push(\gamma)$ to be pseudo-Anosov; the point of Theorem~\ref{thm:Kra} is that every sufficiently complicated curve does produce a pseudo-Anosov mapping class under point-pushing. For a quick proof of this result, see the elegant argument given by Farb and Margalit in \cite[Theorem 14.6]{FarbMargalit}.

Each free homotopy class of oriented closed curves on $S$ corresponds to a conjugacy class in $\pi_1(S,p)$. As dilatation is a conjugacy invariant, it follows that to the free homotopy class of each oriented filling curve $\gamma\subset S$ we may assign a number
\begin{equation*}
\pushdil{\gamma} \colonequals \text{the dilatation of $\push(\gamma)$}
\end{equation*}
whose logarithm measures the entropy introduced by stirring the surface along the path $\gamma$. Our primary goal is now to answer the following general question.

\begin{mques}\label{ques:goal}
How does $\pushdil{\gamma}$ depend on the the complexity of the filling curve $\gamma$?
\end{mques}

In order to address Question~\ref{ques:goal}, we need a quantitative measure of the complexity of a closed curve. The most natural choice seems to be self-intersection number.

\begin{defn}[Self-intersection number]\label{defn:selfIntNum}
If $\gamma\colon S^1 \to S$ is a closed curve on the surface $S$, then the \emph{(geometric) self-intersection number} of $\gamma$ is defined to be the quantity
\[ \intnum(\gamma)\colonequals \min_{\mu\in[\gamma]} \frac{1}{2}\card{\left\{ (x,y) \mid \text{$x,y\in S^1$, $x\neq y$, and $\mu(x)=\mu(y)$} \right\}},\]
where the minimum is taken over all closed curves $\mu$ in the free homotopy class of $\gamma$.
\end{defn}

We point out that $\intnum(\gamma)$ is an integer and that it is the ``obvious'' geometric quantity---for representative curves whose intersections are all $4$--valent, the quantity being minimized is just the number of intersection points.

Our goal is to relate $\lambda_\gamma$ and $\intnum(\gamma)$. A simple argument shows that, on a fixed surface $S$, $\pushdil{\gamma}$ tends to infinity as $\intnum(\gamma)$ increases in the sense that
\begin{equation}\label{eqn:toInfinity}
\text{for every $K$ there exists some $N$ so that $\intnum(\gamma)\geq N \implies \pushdil{\gamma} > K$.}
\end{equation}
Indeed, since $f\push(\gamma)f\inv = \push(f(\gamma))$ for any $f\in \Mod(S,p)$, we see that $\intnum(\gamma)$ is a conjugacy invariant of $\push(\gamma)$ and that \eqref{eqn:toInfinity} is a consequence of Ivanov's \cite{Ivanov88} well-known compactness property: For each $K>1$ there are only finitely many conjugacy classes of pseudo-Anosov elements $f\in \Mod(S,p)$ with $\dilat{f} \leq K$. However, this does not explain \emph{how} $\pushdil{\gamma}$ depends on $\intnum(\gamma)$ nor at what rate $\pushdil{\gamma}$ tends to infinity. Our main result addresses these issues by describing an explicit relationship between dilatation and self-intersection number.

\begin{mythm}\label{thm:generalBounds}
Let $S = S_{g,n}$ be a surface satisfying $3g + n > 3$. If $\gamma\colon[0,1]\to S$ is a closed filling curve on $S$ that represents a primitive element of $\pi_1(S)$, then the dilatation $\pushdil{\gamma}$ of the mapping class $\push(\gamma)$ satisfies
\[\sqrt[5]{\intnum(\gamma)+1} \leq \pushdil{\gamma} \leq 9^{\intnum(\gamma)}.\]
Furthermore, the upper bound holds without the assumption that $\gamma$ is primitive.
\end{mythm}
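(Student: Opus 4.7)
The plan is to prove the upper and lower inequalities by essentially disjoint arguments: a combinatorial/algebraic upper bound based on decomposing $\gamma$ into simple pieces, and a more delicate geometric lower bound based on intersection-number growth.

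\emph{Upper bound.} My approach is to realize $\push(\gamma)$ as a composition of simpler mapping classes. A representative of $\gamma$ with $k = \intnum(\gamma)$ transverse double points can be split at each self-intersection and regrouped into a factorization $\gamma = \sigma_1 \cdots \sigma_m$ in $\pi_1(S, p)$ with each $\sigma_i$ a simple loop and $m$ controlled by $k$. Since $\push$ is a homomorphism, $\push(\gamma) = \push(\sigma_1)\cdots\push(\sigma_m)$, and for a simple loop $\sigma$ one has the classical identity $\push(\sigma) = T_\ell T_r^{-1}$, the difference of Dehn twists along the two boundary components of an annular regular neighborhood of $\sigma$. Fixing a finite filling system of simple closed curves on $S\setminus\{p\}$ gives a PL-norm on the space of measured laminations in which each $(\push(\sigma_i))_*$ has operator norm bounded by a small universal constant. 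Since the dilatation $\pushdil{\gamma}$ is dominated by the operator norm of $\push(\gamma)_*$, submultiplicativity of operator norms under composition yields an exponential upper bound, and a direct computation — tracking precisely how a weighted branch under $T_\ell T_r^{-1}$ is redistributed across the fixed filling system — pins the base at $9$.

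\emph{Lower bound.} Here the goal is the equivalent statement $\intnum(\gamma) \leq \pushdil{\gamma}^5 - 1$: many self-intersections force large dilatation. The strategy is to construct a pair of simple closed curves $\alpha,\beta$ on $S\setminus\{p\}$ for which $\intnum(\push(\gamma)(\alpha), \beta)$ is simultaneously bounded above by $C\pushdil{\gamma}$ and below by a positive power of $\intnum(\gamma)$. The upper estimate is standard: the classical asymptotic $\intnum(\push(\gamma)^n(\alpha), \beta)\sim c\pushdil{\gamma}^n$ for pseudo-Anosov $\push(\gamma)$ controls single-step intersection growth in $\pushdil{\gamma}$. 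The lower estimate exploits the geometric picture that $\push(\gamma)$ is isotopic to a homeomorphism supported in a thin neighborhood of $\gamma$ which drags a small disk around $p$ through essentially a full copy of $\gamma$ at each point of $\alpha\cap\gamma$. Consequently $\push(\gamma)(\alpha)$ inherits many intersections with $\beta$ from the self-intersections of $\gamma$, and a judicious choice of $\alpha,\beta$ (likely constructed with reference to a hyperbolic metric on $S\setminus\{p\}$ so that a geodesic of length $L$ has $O(L^2)$ self-intersections) converts this into a lower bound of the form $c\,\intnum(\gamma)^{1/5}$. The exponent $1/5$ is almost certainly an artifact of chaining several such inequalities — conversion between topological intersection and hyperbolic length, the quadratic length/self-intersection estimate, and passage from the operator norm estimate back to $\pushdil{\gamma}$.

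\emph{Main obstacle.} The principal difficulty will be the lower bound, specifically the construction of the certifying pair $\alpha,\beta$ together with the quantitative estimate on $\intnum(\push(\gamma)(\alpha), \beta)$. The upper bound is essentially algebraic and reduces to honest operator-norm bookkeeping on products of Dehn twists; the lower bound, by contrast, must extract quantitative topological information about $\gamma$ as a curve on $S$ from the dilatation of a mapping class specified only by $\gamma$'s homotopy class. That this is unavoidably lossy is reflected in the large gap between the exponential upper bound $9^{\intnum(\gamma)}$ and the polynomial lower bound $(\intnum(\gamma)+1)^{1/5}$.
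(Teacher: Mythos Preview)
Both halves of your plan have genuine gaps, and neither matches the paper's actual route.

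\textbf{Upper bound.} Your proposed factorization $\push(\gamma)=\push(\sigma_1)\cdots\push(\sigma_m)$ with each $\sigma_i$ simple is fine, but the operator-norm step breaks. On a PL-norm coming from a \emph{fixed} filling system $\{c_j\}$ on $S\setminus\{p\}$, the norm of $T_\ell T_r^{-1}$ is governed by $\max_j \intnum(\sigma_i,c_j)$, which is not bounded by any universal constant: the pieces $\sigma_i$ you obtain by resolving $\gamma$ at its double points can intersect the $c_j$ arbitrarily many times. So you cannot get a surface-independent base like $9$ this way. The paper instead builds a pretrack $\tau_\gamma$ directly from $\gamma$ (smoothing each self-intersection into a local trigon) and shows $\push(\gamma)(\tau_\gamma)\prec\tau_\gamma$. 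The incidence matrix then factors as a product of $2\intnum(\gamma)$ explicit local matrices---one for each crossing of a self-intersection---whose rows each sum to at most $3$; a simple closed curve carried by $\tau_\gamma$ plus the Thurston asymptotic then gives $\pushdil{\gamma}\le 3^{2\intnum(\gamma)}=9^{\intnum(\gamma)}$. The universality of the constant comes from the purely local nature of crossing a double point, not from any global coordinate system.

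\textbf{Lower bound.} Your single-step strategy cannot work as stated. The asymptotic $\intnum(\push(\gamma)^n(\alpha),\beta)\sim c\,\pushdil{\gamma}^n$ does \emph{not} yield $\intnum(\push(\gamma)(\alpha),\beta)\le C\pushdil{\gamma}$ for any usable $C$: the constant $c$ depends on $\alpha,\beta$ and can swamp a one-step comparison. What the paper does is fundamentally an \emph{iterated} growth estimate: lifting to $\hyp$, choosing a pants decomposition $\pants$ and its dual tree $\tree$, and tracking ``dynamic marked points'' (lifts of $p$ flowing along lifts of $\gamma$), one shows that after $5k$ iterations the lift of $\varphi_\gamma^{5k}(\alpha)$ is isotopically forced to visit at least $(\intnum(\gamma)+1)^{k-1}$ distinct $\Gstab$-orbits of vertices in $\tree$. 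This gives
\[
\intnum\bigl(\varphi_\gamma^{5k}(\alpha),\pants\bigr)\ \ge\ (\intnum(\gamma)+1)^{k-1}\qquad\text{for all }k\ge 1,
\]
and dividing by $\pushdil{\gamma}^{5k}$ and sending $k\to\infty$ via the Thurston asymptotic forces $\pushdil{\gamma}^5\ge \intnum(\gamma)+1$. The exponent $5$ is not an artifact of chained length estimates; it is the literal word-length of the padded letters $\gamma^2\omega_i\gamma^2$ (with $\omega_i$ either $\gamma$ or a ``turn'' at a self-intersection) that one needs so that successive turns in the universal cover are far enough apart in $\tree$ to guarantee the branching is genuine. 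Your sketch has no mechanism to produce this kind of uniform exponential branching over all iterates.
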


Surprisingly, these bounds are independent of the surface---they only depend on the geometric complexity of the pushing curve $\gamma$. Recalling the context of stirring on a surface, this shows that any stirring path with many self-crossings is guaranteed to generate a lot of entropy and that, in a sense, the entropy is an actual consequence of the complexity of the stirring path.
Since dilatation grows faster than self-intersection number upon taking powers of $\gamma\in\pi_1(S)$, the lower bound in Theorem~\ref{thm:generalBounds} may be extended to non-primitive filling curves as follows.

\begin{cor}\label{cor:nonPrimitiveLowerBounds}
Let $S = S_{g,n}$ be a surface satisfying $3g+n > 3$, and let $\gamma\colon[0,1]\to S$ be a closed filling curve on $S$.
\begin{itemize}
\item[i)] If $S = S_{0,4}$ or $S_{1,2}$ and $\gamma$ is the square of a primitive element in $\pi_1(S)$, then $\pushdil{\gamma}\geq\sqrt[5]{\intnum(\gamma)}$.
\item[ii)]  If $S = S_{1,1}$ and $\gamma$ is the second, third, or fourth power of a primitive element, then $ \pushdil{\gamma} \geq \sqrt[5]{(\intnum(\gamma)+1)/2}$.
\item[iii)] In all other cases, $\pushdil{\gamma} \geq \sqrt[5]{\intnum(\gamma)+1}$.
\end{itemize}
\end{cor}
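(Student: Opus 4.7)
The plan is to reduce to the primitive case via the homomorphism property of $\push$, apply Theorem~\ref{thm:generalBounds}, and then relate $\intnum(\eta^k)$ to $\intnum(\eta)$.

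First I would write $\gamma = \eta^k$ for a primitive $\eta\in\pi_1(S,p)$ with $k\geq 2$; the case $k=1$ is Theorem~\ref{thm:generalBounds} itself. Since $\eta$ and $\eta^k$ share the same image in $S$, the filling hypothesis on $\gamma$ forces $\eta$ to fill. Theorem~\ref{thm:generalBounds} then yields $\pushdil{\eta}\geq(\intnum(\eta)+1)^{1/5}$, and since $\push$ is a homomorphism and dilatation is multiplicative under pseudo-Anosov iteration,
\[
\pushdil{\gamma} \;=\; \pushdil{\eta}^k \;\geq\; (\intnum(\eta)+1)^{k/5}.
\]
So it suffices to translate this bound in $\intnum(\eta)$ into a bound in $\intnum(\gamma)=\intnum(\eta^k)$.

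The key technical step is the sharp estimate $\intnum(\eta^k)\leq k^2\intnum(\eta)$, obtained by representing $\eta^k$ via a smooth time-dependent perturbation of a minimal-position immersed representative of $\eta$ through $k$ traversals: at each of the $\intnum(\eta)$ self-crossings of $\eta$, the $2k$ local strands decompose into $k$ mutually parallel copies of each of the two transverse directions, producing exactly $k^2$ transverse crossings and no auxiliary ones. Substituting into the previous display reduces case~(iii) to the elementary numerical inequality $(\intnum(\eta)+1)^k \geq k^2\intnum(\eta)+1$, which a direct algebraic check shows holds whenever $\intnum(\eta)\geq 2$ or $k\geq 5$.

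The borderline regime is $\intnum(\eta)=1$ with $k\in\{2,3,4\}$. A surface-by-surface analysis then identifies $S_{0,4}$, $S_{1,2}$, and $S_{1,1}$ as precisely the surfaces admitting a primitive filling curve with $\intnum(\eta)=1$, and a careful (and sharper) computation of $\intnum(\eta^k)$ for these specific minimal fillers is needed to obtain the weakened conclusions. For $k=2$ on $S_{0,4}$ or $S_{1,2}$, the inequality $\intnum(\eta^2)\leq 4 = (\intnum(\eta)+1)^2$ gives $\pushdil{\gamma}^5\geq \intnum(\gamma)$, yielding (i) at the cost of the ``$+1$''. For $S_{1,1}$ with $k\in\{2,3,4\}$, the factor of $2$ in (ii) absorbs the slack, since $k^2+1\leq 2(\intnum(\eta)+1)^k$ holds in each case.

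The main obstacle is the sharpness of $\intnum(\eta^k)\leq k^2\intnum(\eta)$: even a $+1$ additive slack would invalidate the borderline equality at $\intnum(\eta)=1$, $k=2$ underlying (i), so the construction of $\eta^k$ must be arranged as a continuous smooth perturbation rather than a naive splicing of disjoint parallel copies joined by extra arcs. A secondary difficulty is the surface-by-surface identification of minimal filling curves, together with the finer computation of $\intnum(\eta^k)$ needed to handle the remaining $k=3,4$ configurations on $S_{0,4}$ and $S_{1,2}$ within case~(iii).
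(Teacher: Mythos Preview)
Your overall architecture matches the paper's: write $\gamma=\eta^k$, apply the primitive bound to $\eta$, then compare $(\intnum(\eta)+1)^k$ with $\intnum(\eta^k)$. But the linchpin of your argument---the ``sharp estimate'' $\intnum(\eta^k)\leq k^2\intnum(\eta)$---is false, and this error propagates through the case analysis.

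To see the bound fails, take $\eta$ simple: your estimate gives $\intnum(\eta^k)\le 0$, yet $\eta^k$ is never simple for $k\ge 2$. The flaw in your perturbation argument is that the $k$ ``parallel'' strands away from the crossings of $\eta$ cannot remain disjoint. The normal bundle of $\eta$ is an interval (not a circle), so the strands are linearly ordered along each simple arc; but after one full trip around $\eta$ the $j$-th strand must connect to the $(j{+}1)$-st, realizing a $k$-cycle. A $k$-cycle requires at least $k-1$ transpositions, so any generic representative of $\eta^k$ acquires at least $k-1$ extra crossings. The paper uses exactly this: $k$ offset copies give $\intnum(\eta^k)\le k^2\intnum(\eta)+(k-1)$.

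With the correct bound, your numerical reduction becomes $(\intnum(\eta)+1)^k\ge k^2\intnum(\eta)+k$, which \emph{fails} at $\intnum(\eta)=2$, $k=2$ (that is, $9\ge 10$). This is precisely why case~(i) exists. Relatedly, your surface identification is off: a filling curve on $S_{0,4}$ or $S_{1,2}$ has $\intnum(\eta)\ge -\chi(S)=2$, never $1$. The paper's case split is driven by the Euler-characteristic lower bound on $\intnum(\eta)$: surfaces with $\intnum(\eta)\ge 3$ fall under (iii) for all $k$; $S_{0,4}$ and $S_{1,2}$ (where $\intnum(\eta)\ge 2$) need the weakened bound only at $k=2$; and $S_{1,1}$ (where $\intnum(\eta)\ge 1$) needs it for $k\in\{2,3,4\}$. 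Once you replace your bound by $\intnum(\eta^k)\le k^2\intnum(\eta)+k-1$ and reorganize the cases around $-\chi(S)$ rather than around ``$\intnum(\eta)=1$,'' your outline becomes the paper's proof.
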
 

\begin{rem*}\label{rem:surfacesWithBoundary}
We have chosen to work in the context of (possibly) punctured surfaces in order to ease the exposition. However, Theorem~\ref{thm:generalBounds} and Corollary~\ref{cor:nonPrimitiveLowerBounds} also hold for compact surfaces with boundary. The conversion between these two contexts is straightforward and left to the reader.
\end{rem*}

It is also interesting to consider Question~\ref{ques:goal} in the context of other measures of complexity. For example, the \emph{lower     central series} $\{G_i\}$ and the \emph{derived series} $\{G^{(i)}\}$ of a group $G=G_1 = G^{(1)}$ are the recursively defined sequences
\begin{equation*}
G_{k+1} = [G_{k},G]\qquad \text{and} \qquad G^{(k+1)} = [G^{(k)},G^{(k)}],
\end{equation*}
respectively. For non-abelian surface groups $G = \pi_1(S_{g,n})$, Malestein and Putman \cite{MalesteinPutman09} have related the self-intersection number of a nontrivial element $\gamma\in G$ to its depth in both the lower central series and the derived series of $G$. More precisely, they showed 
that $\intnum(\gamma)\geq \log_8(k)-1$ for all $\gamma\in G_k$, 
that $\intnum(\gamma)\geq 2^{\lceil k/2\rceil}-2$ for all $\gamma\in G^{(k)}$,
and that $\intnum(\gamma)\geq \tfrac{k}{4g+n-1}-1$ when $\gamma\in G_k$ and $n\geq 1$ \cite{MalesteinPutman09}.
Combining their results with Corollary~\ref{cor:nonPrimitiveLowerBounds} immediately implies the following corollary.

\begin{cor}\label{cor:algebraicConnection}
Let $S = S_{g,n}$ be a surface satisfying $3g + n > 5$, and let $\gamma\in G = \pi_1(S,p)$ be a loop that fills $S$. For $k\geq 1$, let $G_k$ and $G^{(k)}$ denote the $k^{\text{th}}$ terms in the lower central series and the derived series of $G$, respectively.
\begin{enumerate}
\renewcommand{\theenumi}{\arabic{enumi}}
\renewcommand{\labelenumi}{(\theenumi)}
\item[i)] If $\gamma\in G_k$, then 
$\pushdil{\gamma} \geq \left(\log_8(k)\right)^{\nicefrac{1}{5}}$.
\item[ii)] If $\gamma\in G^{(k)}$, then 
$\pushdil{\gamma} \geq \left(2^{\ceil{\nicefrac{k}{2}}-2}+1\right)^{\nicefrac{1}{5}} \geq 2^{\frac{k-4}{10}}$.
\item[iii)] Furthermore, if $\gamma\in G_k$  and $n\geq 1$, then $\pushdil{\gamma} \geq \left(\frac{k}{4g+n-1}\right)^{\nicefrac{1}{5}}$.
\end{enumerate}
(As per Corollary~\ref{cor:nonPrimitiveLowerBounds}, slightly weaker bounds hold for surfaces satisfying $4 \leq 3g+n \leq 5$.)
\end{cor}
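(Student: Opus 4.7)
The statement is set up to be essentially a direct substitution, so my plan is to frame it that way rather than developing any new machinery. The hypothesis $3g+n > 5$ is precisely what rules out the small-surface exceptions $S_{0,4}$, $S_{1,1}$, and $S_{1,2}$ in Corollary~\ref{cor:nonPrimitiveLowerBounds}, so we always land in its case (iii). Hence for any filling loop $\gamma\in\pi_1(S,p)$ we have the clean bound
\[
\pushdil{\gamma} \;\geq\; \sqrt[5]{\intnum(\gamma)+1}.
\]
I would state this as the first line of the proof and then handle (i), (ii), (iii) by plugging in the three Malestein--Putman inequalities recalled just before the corollary.

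For (i), if $\gamma\in G_k$ then $\intnum(\gamma)+1 \geq \log_8(k)$, so raising to the $\nicefrac{1}{5}$ power yields the claim. For (iii), the bound $\intnum(\gamma)+1 \geq k/(4g+n-1)$ is immediate, and the fifth root finishes it. For (ii), the relation $\intnum(\gamma)\geq 2^{\lceil k/2\rceil}-2$ gives $\intnum(\gamma)+1 \geq 2^{\lceil k/2\rceil}-1$, which we want to compare with $2^{\lceil k/2\rceil -2}+1$. Here the only mild bookkeeping is the elementary inequality $2^{m}-1 \geq 2^{m-2}+1$ valid for all $m \geq 2$ (and the case $k\leq 3$, where $\lceil k/2\rceil -2 < 0$, is handled by noting the claimed bound is already at most~$1$ and so is trivial). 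The final comparison $(2^{\lceil k/2\rceil -2}+1)^{\nicefrac{1}{5}} \geq 2^{(k-4)/10}$ follows from $\lceil k/2\rceil \geq k/2$ together with dropping the $+1$.

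I do not anticipate a real obstacle: the whole argument is bookkeeping on exponents, and each of the three items is one line once the Malestein--Putman inequality is quoted. If anything deserves care it is just the case analysis on small $k$ in part~(ii) to verify the passage from $2^{\lceil k/2\rceil}-1$ to the slightly weakened form $2^{\lceil k/2\rceil-2}+1$ used in the statement; I would dispose of this with a single sentence noting both sides are $\leq 1$ in the degenerate range.
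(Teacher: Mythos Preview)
Your approach is exactly the one the paper intends: the corollary is stated without proof, with the sentence ``Combining their results with Corollary~\ref{cor:nonPrimitiveLowerBounds} immediately implies the following corollary'' serving as the entire argument. Your plan---observe that $3g+n>5$ places us in case (iii) of Corollary~\ref{cor:nonPrimitiveLowerBounds}, then substitute each Malestein--Putman inequality into $\pushdil{\gamma}\geq(\intnum(\gamma)+1)^{1/5}$---is precisely this.

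There is one small slip in your treatment of part (ii) for small $k$. You write that when $\lceil k/2\rceil-2<0$ (i.e.\ $k\in\{1,2\}$) the claimed bound is ``already at most $1$ and so is trivial.'' But for $k\in\{1,2\}$ the claimed bound is $(2^{-1}+1)^{1/5}=(3/2)^{1/5}>1$, so this dismissal does not work as stated. The fix is immediate: any filling curve has $\intnum(\gamma)\geq 1$, so $\intnum(\gamma)+1\geq 2>3/2$, and the bound follows. (Alternatively, under $3g+n>5$ one has $\intnum(\gamma)\geq -\chi(S)\geq 2$.) With this one-sentence patch your argument is complete.
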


\subsubsection*{The spectrum of pseudo-Anosov dilatations}
In addition to studying the dilatation of individual elements, one may also consider the spectrum 
\begin{equation*}
\spec(A) \colonequals \{\log(\dilat{f}) \mid f\in A\text{ is pseudo-Anosov}\}
\end{equation*}
of all entropies attained in a particular subset $A\subseteq \Mod(S)$ (or $A\subset \Mod(S,p)$) of mapping classes. The aforementioned compactness property \cite{Ivanov88} implies that $\spec(A)$ is a discrete closed subset of $\R$, a fact which was previously observed by Arnoux and Yoccoz \cite{ArnouxYoccoz81}. Consequently, this spectrum has a least element
\begin{equation}\label{eqn:leastDilatation}
\least(A) \colonequals \inf \left\{\spec(A)\right\}.
\end{equation}
For example, $\spec(\Mod(S))$ may be thought of as the length spectrum of closed Teichm\"uller geodesics in the moduli space of $S$, and $\least(\Mod(S))$ is the length of the shortest such geodesic.

While explicit calculations of $\least(\Mod(S))$ have only been made in a few low-genus examples (see, e.g., \cite{LanneauThiffeault09} or \cite{Hironaka09}), its asymptotic behavior has been understood for some time. For real-valued functions $f$ and $h$, we write $f \asymp h$ if the quotient $f(x)/h(x)$ is bounded between two positive numbers. For the closed surface $S_g$ of genus $g$, Penner \cite{Penner91} has shown that $\least(\Mod(S_g)) \asymp 1/g$. In particular, by increasing the genus, it is possible to find pseudo-Anosov elements of $\Mod(S_g)$ with dilatation arbitrarily close to $1$.

Farb, Leininger, and Margalit \cite{FarbLeiningerMargalit08} have studied the spectrum of dilatations in the Torelli group $\mathcal{I}_g$, which is the subgroup of $\Mod(S_g)$ consisting of mapping classes that act trivially on $H
_1(S_g;\Z)$. Contrasting Penner's result, they proved that $\least(\mathcal{I}_g)$ is universally bounded between $0.197$ and $4.127$. Since these bounds are independent of genus, this shows that $L(\mathcal{I}_g) \asymp 1$.

\subsubsection*{Least point-pushing dilatations}

In light of these results, it is natural to consider the asymptotics of $\least(\pp(S_g))$ and, more generally, of $\least(\pp(S_{g,n}))$. Since point-pushing homeomorphisms act trivially on homology, it is apparent from \cite{FarbLeiningerMargalit08} that $\least(\pp(S_g))$ is universally bounded below away from zero. Furthermore, since the self-intersection number of any filling curve $\gamma\subset S_{g,n}$ satisfies
\begin{equation}\label{eqn:EulerCharacteristicBound}
\intnum(\gamma) \geq -\chi(S_{g,n}) = 2g+n-2
\end{equation}
one might be tempted to invoke the observation \eqref{eqn:toInfinity} and conclude that $\least(\pp(S_{g,n}))$ tends to infinity with both $g$ and $n$. While this reasoning is invalid, because Ivanov's compactness property only applies to one surface at a time, the conclusion that $\least(\pp(S_{g,n}))\to\infty$ does follow from Theorem~\ref{thm:generalBounds} together with \eqref{eqn:EulerCharacteristicBound}:

\begin{cor}\label{cor:boundOnLeast}
For any surface $S_{g,n}$ satisfying $3g+3>n$, we have
\[\least(\pp(S_{g,n})) \geq \tfrac{1}{5}\log(2g+n-1).\]
In particular $\least(\pp(S_{g,n}))$ tends to infinity with both $g$ and $n$.
\end{cor}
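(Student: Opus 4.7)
The plan is to feed the Euler-characteristic lower bound on the self-intersection number of a filling curve (inequality~\eqref{eqn:EulerCharacteristicBound}) into the dilatation-vs-intersection inequality of Theorem~\ref{thm:generalBounds} (together with its non-primitive extension Corollary~\ref{cor:nonPrimitiveLowerBounds}), and then pass to the infimum over filling curves.

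First, I would use Kra's Theorem~\ref{thm:Kra} to identify the pseudo-Anosov elements of $\pp(S_{g,n})$ with filling loops: every pseudo-Anosov mapping class in $\pp(S_{g,n})$ is of the form $\push(\gamma)$ for some filling curve $\gamma \in \pi_1(S,p)$, and conversely. Consequently,
\[
\least(\pp(S_{g,n})) \;=\; \inf\bigl\{\log \pushdil{\gamma} \;:\; \gamma \text{ is a filling loop on } S_{g,n}\bigr\}.
\]
Thus to prove the corollary, it suffices to give a uniform lower bound on $\log\pushdil{\gamma}$ over all filling loops $\gamma$.

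Next, I would apply Theorem~\ref{thm:generalBounds} (in the primitive case) or Corollary~\ref{cor:nonPrimitiveLowerBounds} (in general) to obtain $\pushdil{\gamma} \geq \sqrt[5]{\intnum(\gamma)+1}$, and then combine this with the estimate \eqref{eqn:EulerCharacteristicBound}, which says $\intnum(\gamma) \geq 2g+n-2$ for any filling curve. Together these give
\[
\pushdil{\gamma} \;\geq\; \sqrt[5]{\intnum(\gamma)+1} \;\geq\; \sqrt[5]{2g+n-1},
\]
and taking logarithms yields $\log \pushdil{\gamma} \geq \tfrac{1}{5}\log(2g+n-1)$. Since this bound is independent of $\gamma$, taking the infimum over all filling loops produces the desired inequality $\least(\pp(S_{g,n})) \geq \tfrac{1}{5}\log(2g+n-1)$. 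The second assertion that $\least(\pp(S_{g,n})) \to \infty$ with $g$ and $n$ is then immediate from the fact that $2g+n-1 \to \infty$.

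The only real wrinkle is the handful of small-surface exceptions recorded in parts (i)--(ii) of Corollary~\ref{cor:nonPrimitiveLowerBounds}, where the lower bound on $\pushdil{\gamma}$ degrades slightly for certain non-primitive $\gamma$ on $S_{0,4}$, $S_{1,1}$, or $S_{1,2}$. This is the main (and, in truth, only) obstacle. However, these surfaces comprise finitely many cases for which $2g+n-1$ is bounded by a small constant, so the weaker bounds given by Corollary~\ref{cor:nonPrimitiveLowerBounds} still suffice to establish the desired inequality $\least(\pp(S_{g,n})) \geq \tfrac{1}{5}\log(2g+n-1)$ (possibly after a direct check). Once these exceptions are dispatched, the corollary follows purely formally from Theorem~\ref{thm:generalBounds} and the Euler characteristic estimate.
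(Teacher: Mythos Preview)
Your argument is correct in spirit and arrives at the right conclusion, but you have introduced an unnecessary complication by invoking Corollary~\ref{cor:nonPrimitiveLowerBounds} and then having to worry about its exceptional cases on $S_{0,4}$, $S_{1,1}$, and $S_{1,2}$. The paper's proof avoids this entirely with a one-line observation: the infimum $\least(\pp(S_{g,n}))$ is actually attained (the spectrum is discrete and closed), and it must be attained by a \emph{primitive} filling curve. Indeed, if $\gamma = \mu^m$ with $m \geq 2$, then $\pushdil{\gamma} = \pushdil{\mu}^m > \pushdil{\mu}$, so no proper power can realize the minimum. With primitivity in hand, Theorem~\ref{thm:generalBounds} applies directly and gives $\pushdil{\gamma} \geq \sqrt[5]{\intnum(\gamma)+1} \geq \sqrt[5]{2g+n-1}$ via \eqref{eqn:EulerCharacteristicBound}, with no case analysis needed.

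Your route through Corollary~\ref{cor:nonPrimitiveLowerBounds} does work, but the ``direct check'' you defer is not entirely trivial: for instance, on $S_{0,4}$ with $\gamma$ the square of a primitive, the corollary only gives $\pushdil{\gamma} \geq \sqrt[5]{\intnum(\gamma)}$, and you would need to argue that $\intnum(\gamma) \geq 3$ (rather than just $\intnum(\gamma) \geq 2$ from \eqref{eqn:EulerCharacteristicBound}) to recover the stated bound. This can be done by noting that a square has strictly more self-intersections than its primitive root, but it is extra work. The primitivity-of-the-minimizer observation is both simpler and more robust.
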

\begin{proof}
We simply note that $\least(\pp(S_{g,n}))$ is realized by a primitive filling curve $\gamma\subset S_{g,n}$.
\end{proof}

Corollary~\ref{cor:boundOnLeast} proves that the least dilatation in $\pp(S_g)$ exhibits drastically different behavior than that for the larger Torelli group---any point-pushing pseudo-Anosov on a high-genus surface must have large dilatation. Interestingly, Corollary~\ref{cor:boundOnLeast} also has the following implication regarding the spectrum of all pseudo-Anosov dilatations of point-pushing homeomorphisms on all surfaces.

\begin{cor}
The infinite union $\bigcup_{3g+n>3} \spec(\pp(S_{g,n}))$ is a discrete closed subset of $\R$.
\end{cor}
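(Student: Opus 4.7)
The plan is to show that the union, when intersected with any bounded interval $[0,K]$, consists of only finitely many points; this immediately forces the union to be a discrete closed subset of $\mathbb{R}$.

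First, I would fix an arbitrary $K > 0$ and ask which surfaces $S_{g,n}$ can possibly contribute an element of $\spec(\pp(S_{g,n}))$ that lies in $[0,K]$. By Corollary~\ref{cor:boundOnLeast}, any such surface must satisfy
\[\tfrac{1}{5}\log(2g+n-1) \leq \least(\pp(S_{g,n})) \leq K,\]
which rearranges to $2g + n - 1 \leq e^{5K}$. There are only finitely many pairs $(g,n)$ with $3g+n>3$ meeting this inequality, so only finitely many surfaces contribute.

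Next, for each such surface $S_{g,n}$, Ivanov's compactness property ensures that $\spec(\pp(S_{g,n}))$ is a discrete closed subset of $\mathbb{R}$ (as noted in the paragraph preceding equation~\eqref{eqn:leastDilatation}). In particular the intersection $\spec(\pp(S_{g,n})) \cap [0,K]$ is finite. Taking the finite union of these finitely many finite sets shows that
\[\Bigl(\bigcup_{3g+n>3} \spec(\pp(S_{g,n}))\Bigr) \cap [0,K]\]
is finite, hence closed in $[0,K]$ with no accumulation points. Since $K$ was arbitrary, the full union is discrete and closed in $\mathbb{R}$.

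There is essentially no obstacle to overcome: the corollary does all the work by converting the problem of controlling the union over infinitely many surfaces into a uniform lower bound that grows with the topological complexity, so that each bounded window only sees finitely many surfaces at once. The only minor care needed is to note that ``discrete closed subset of $\R$'' is equivalent to having finite intersection with every compact interval, which is what the argument directly verifies.
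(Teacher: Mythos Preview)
Your argument is correct and is exactly the intended one: the paper states this corollary without proof, treating it as an immediate consequence of Corollary~\ref{cor:boundOnLeast}, and your write-up spells out precisely that deduction.
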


This is a marked contrast to the situation for the full mapping class group: although each spectrum $\spec(\Mod(S_{g,n}))$ is discrete, by looking at powers of pseudo-Anosovs $f\in\Mod(S_g)$ with arbitrarily small dilatation, we see that the subspectrum $\bigcup_{g}\spec(\Mod(S_{g}))$ of all pseudo-Anosov dilatations on all closed surfaces is in fact dense in $[0,\infty)$. 

In the case of closed surfaces, we also establish an upper bound on $\least(\pp(S_g))$.

\begin{mythm}\label{thm:leastDilatationFixedGenus}
For the closed surface $S_g$ of genus $g\geq 2$, we have
\[ \tfrac{1}{5}\log(2g) \leq \least(\pp(S_g)) < g\log(11).\]
\end{mythm}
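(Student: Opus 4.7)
By Kra's criterion, any pseudo-Anosov $\push(\gamma) \in \pp(S_g)$ has a filling representative $\gamma$, and its minimum-intersection representative cuts $S_g$ into disks. An Euler-characteristic count on the resulting CW structure---$V = \intnum(\gamma)$ four-valent vertices, $E = 2\intnum(\gamma)$ edges, $F\geq 1$ complementary faces---gives $V-E+F = 2-2g$, forcing $\intnum(\gamma) \geq 2g-1$. Since $S_g$ with $g\geq 2$ is none of $S_{0,4}$, $S_{1,1}$, $S_{1,2}$, case (iii) of Corollary~\ref{cor:nonPrimitiveLowerBounds} applies to every such $\gamma$, yielding $\pushdil{\gamma}\geq \sqrt[5]{\intnum(\gamma)+1}\geq \sqrt[5]{2g}$. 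Taking logarithms and the infimum over pseudo-Anosov $\push(\gamma)$ gives $\least(\pp(S_g)) \geq \tfrac{1}{5}\log(2g)$.

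\textbf{Upper bound, general strategy.} The universal bound $\pushdil{\gamma}\leq 9^{\intnum(\gamma)}$ from Theorem~\ref{thm:generalBounds}, combined with the minimum $\intnum(\gamma)\geq 2g-1$ just established, only yields $\pushdil{\gamma}\leq 9^{2g-1}$, which exceeds $11^g$ for every $g\geq 2$. Since the universal estimate is insufficient, one must exhibit an \emph{explicit} family $\{\gamma_g \subset S_g\}$ of filling curves whose dilatations are bounded directly. My plan is a recursive construction: fix a base curve $\gamma_2$ on $S_2$ for which an explicit invariant train track yields $\pushdil{\gamma_2} < 11^2$, and build $\gamma_g$ from $\gamma_{g-1}$ by attaching a handle to $S_{g-1}$ and splicing in a short loop $\delta_g$ through the new handle. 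If arranged so that the invariant train track for $\push(\gamma_{g-1})$ extends to one carrying $\push(\gamma_g)$ by appending a small local block to the transition matrix, then iterating gives $\pushdil{\gamma_g} \leq 11^{g-2}\pushdil{\gamma_2} < 11^g$ and hence $\least(\pp(S_g)) < g\log(11)$.

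\textbf{Main obstacle.} The hard part is showing that each handle splice multiplies the Perron--Frobenius eigenvalue of the transition matrix by at most $11$. This splits into (i) arranging the splice so that only a bounded number of new branches appear near $\delta_g$, (ii) verifying that the local block contributed by $\delta_g$ has spectral radius at most $11$, and (iii) controlling the cross-block interactions between the old and new branches---most naturally via a block-triangular structure or a Perron--Frobenius perturbation estimate. The base case on $S_2$ reduces to a concrete, if tedious, train-track folding computation. If such an inductive construction proves unwieldy, an alternative plan is to write down a single explicit family such as $\gamma_g = a_1b_1\cdots a_g b_g$ in standard generators of $\pi_1(S_g)$ and extract a closed-form dilatation bound from the resulting train track. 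Either way, the essential challenge is obtaining, for a specific family of filling curves, a dilatation bound strictly sharper than the universal $9^{\intnum(\gamma)}$.
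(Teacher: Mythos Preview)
Your lower bound is correct and matches the paper exactly: the strict Euler-characteristic inequality for closed surfaces gives $\intnum(\gamma)\geq 2g-1$, and then Corollary~\ref{cor:nonPrimitiveLowerBounds}(iii) (equivalently, the primitive case of Theorem~\ref{thm:generalBounds} applied to the curve realizing $\least(\pp(S_g))$) yields $\least(\pp(S_g))\geq \tfrac{1}{5}\log(2g)$.

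Your upper bound, however, is a plan rather than a proof, and the plan diverges from the paper in a way that makes your ``main obstacle'' genuinely hard while the paper's route sidesteps it. You propose to build $\gamma_g$ inductively by attaching one handle at a time and to control the Perron--Frobenius eigenvalue via a block-triangular or perturbation argument. The difficulty you flag---cross-block interactions between old and new branches---is real: when you push the marked point through the spliced-in handle, the eye of the track carries weight accumulated from all previous handles, so the new block is \emph{not} decoupled from the old ones, and a clean multiplicative bound ``$\times 11$ per handle'' does not fall out of a block-triangular structure.

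The paper avoids this by a different construction. Rather than attaching handles one at a time, it fixes a single filling curve $\gamma_0\subset S_2$ with $\intnum(\gamma_0)=3$ whose induced pretrack $\pretrack{\gamma_0}$ happens to be an honest train track, and then takes the $(g-1)$-fold cyclic cover $S_g\to S_2$ and lifts $\gamma_0^{g-1}$ to a filling curve $\gamma\subset S_g$. Because every piece of $\pretrack{\gamma}$ is a copy of the same local model, the incidence matrix $M_\gamma$ factors as a product of $g-1$ block matrices that are identical up to which block they act on (equation~\eqref{eqn:bigMatrixProduct}). This product multiplies out to an explicit closed form (equation~\eqref{eqn:bigIncidenceMatrix}) in which every block is a polynomial in the four fixed $3\times 3$ and $3\times 9$ matrices $A,B,C,D$ coming from the genus-$2$ computation. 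Because $A$ is upper-triangular with $A_{11}=11$, the powers $A^n$ are computed exactly (equation~\eqref{eqn:MtoTheN}), and the largest row sum of $M_\gamma$ is then bounded by a geometric series summing to less than $\tfrac{2}{5}\cdot 11^g$. No perturbation estimate or inductive spectral control is needed---just a direct row-sum computation on an explicit matrix.

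So the missing idea in your proposal is the cyclic-cover construction, which replaces your inductive splice by a single uniform lift and turns the eigenvalue bound into an elementary row-sum calculation. Your alternative suggestion of trying $\gamma_g=a_1b_1\cdots a_gb_g$ is closer in spirit, but you would still need to produce an invariant \emph{train} track (not just the pretrack $\pretrack{\gamma_g}$, which will have complementary monogons once $\intnum(\gamma_g)$ is large) and compute its incidence matrix; the paper's choice of $\gamma_0$ is made precisely so that $\pretrack{\gamma_0}$---and hence its lift---has no bad complementary regions.
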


\subsubsection*{Least dilatations and self-intersection number}

We now consider the dependence of least dilatation on self-intersection number. For a nonnegative integer $k$, we define the subset
\begin{equation*}
\pp_k(S) \colonequals \left\{\push(\gamma)\;\middle\vert\; \text{$\gamma\in \pi_1(S)$ with $\intnum(\gamma)=k$}\right\}
\end{equation*}
of point-pushing homeomorphisms coming from pushing curves with self-intersection number $k$. Refining our investigation to this stratification of $\pp(S)$ leads to the following result, which completely describes the asymptotic dependence of least dilatation on self-intersection number.

\begin{mythm}\label{thm:leastDilatationVaryN}
Let $S_g$ be a closed surface of genus $g\geq 3$. For any integer $k \geq 3g-1$, we have that
\[\tfrac{1}{5}\log(k+1) \leq \least(\pp_k(S_g)) < \log(k) + g\log(11).\]
In particular, for a closed surface $S$ of genus at least $3$, this shows that $\least(\pp_k(S)) \asymp \log(k)$.
\end{mythm}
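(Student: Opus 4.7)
The lower bound is immediate from Theorem~\ref{thm:generalBounds}: every pseudo-Anosov $\push(\gamma)\in\pp_k(S_g)$ satisfies $\pushdil{\gamma}\geq\sqrt[5]{\intnum(\gamma)+1}=\sqrt[5]{k+1}$, so taking logarithms and infimizing over the stratum gives $\least(\pp_k(S_g))\geq\tfrac{1}{5}\log(k+1)$.

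For the upper bound, the plan is to exhibit, for each $k\geq 3g-1$, a filling primitive curve $\gamma_k\subset S_g$ with $\intnum(\gamma_k)=k$ and $\pushdil{\gamma_k}<k\cdot 11^g$, so that $\log\pushdil{\gamma_k}<\log(k)+g\log(11)$. I would start from a ``core'' filling curve $\gamma_*\subset S_g$ realizing (approximately) the bound of Theorem~\ref{thm:leastDilatationFixedGenus}: filling, with $\pushdil{\gamma_*}<11^g$ and $\intnum(\gamma_*)=n_*\leq 3g-1$. The curve $\gamma_k$ is then built by concatenating $\gamma_*$ with a simple ``tail'' arc $\beta$ whose self-crossings with $\gamma_*$ can be prescribed one at a time in a controlled cluster, yielding exactly $k-n_*$ new crossings. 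The hypothesis $k\geq 3g-1\geq n_*$ guarantees there is enough room to tune $\intnum(\gamma_k)$ to any prescribed target value in that range, and since $\gamma_*$ already fills $S_g$ the concatenation is automatically filling.

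The main technical hurdle is bounding $\pushdil{\gamma_k}$: the general upper bound in Theorem~\ref{thm:generalBounds} yields only $\pushdil{\gamma_k}\leq 9^k$, which is exponentially too weak. To get the sharper linear-in-$k$ bound I plan to exploit the fact that $\gamma_k$ carries an invariant train track $\tau_k$ obtained by grafting the tail $\beta$ onto the train track $\tau_*$ underlying $\push(\gamma_*)$. The transition matrix for $\push(\gamma_k)$ on $\tau_k$ is, up to conjugation, block triangular with a ``core'' block essentially equal to the transition matrix of $\gamma_*$ (whose Perron--Frobenius eigenvalue is less than $11^g$) and a ``tail'' block whose Perron eigenvalue grows only linearly in $k-n_*$, because the added crossings come from simple arcs whose action on the carrying train track is essentially twist-like. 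A Perron--Frobenius estimate on this combined matrix --- for instance via the maximum row sum in suitable rescaled coordinates --- then produces $\pushdil{\gamma_k}<k\cdot 11^g$.

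The chief obstacle I anticipate is verifying that the grafted train track $\tau_k$ is genuinely invariant under $\push(\gamma_k)$ with the claimed block-triangular transition matrix. To this end I would arrange that $\beta$ is supported in a disk neighborhood disjoint from the ``dynamic core'' of $\tau_*$ --- that is, at a location where the Perron--Frobenius eigenvector of $M_*$ assigns only small weight --- so that the block decomposition follows from the geometry of the construction rather than from any delicate cancellation. Once that block structure is in place, the desired bound $\pushdil{\gamma_k}<k\cdot 11^g$ reduces to a routine linear-algebra estimate, and combining it with the trivial lower bound completes the proof that $\least(\pp_k(S_g))\asymp\log(k)$.
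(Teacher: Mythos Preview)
Your lower bound is correct and identical to the paper's. The overall strategy for the upper bound---build an explicit filling curve with prescribed self-intersection number and bound its dilatation via a Perron--Frobenius row-sum estimate on an invariant-track incidence matrix---is also what the paper does. But the mechanism you propose for why the bound is only linear in $k$ is not right, and this is a genuine gap rather than a missing detail.

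You claim the transition matrix will be block-triangular, with a ``core'' block governed by $\gamma_*$ and an essentially independent ``tail'' block. This is not what happens: when the marked point is pushed through the extra crossings, it drags strands that then propagate through the entire core of the track, so the core and tail weights mix thoroughly. The paper's actual construction makes this visible. It modifies the base curve $\gamma_0$ on $S_2$ by inserting $n$ extra windings around a single handle, lifts $\gamma_0^{g-2}\mu_0$ to $S_g$ via the cyclic cover, and builds an invariant \emph{bigon} track (the na\"ive pretrack has complementary monogons coming from the winding, so one must hand-craft a replacement track and verify invariance by an explicit sequence of carrying moves). The resulting incidence matrix is not block-triangular at all; instead, its individual \emph{entries} are affine functions of $n$ (see the matrix $(\tilde A\;\tilde B;\tilde C\;\tilde D)$ in the paper), and it is this linear dependence of entries that forces every row sum, and hence the Perron eigenvalue, to be $O(n\cdot 11^{g})$. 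Your block-triangular picture would, if true, give the max of the two block eigenvalues rather than their product, so even the heuristic arithmetic you sketch does not lead to $k\cdot 11^g$. To make your plan rigorous you would need to actually specify the tail, build the invariant track, and compute (or at least bound) the incidence matrix---and at that point you would find the linear-entry phenomenon, not block-triangularity, doing the work.
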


\subsubsection*{Outline}
The theory of train tracks provides a natural means of calculating pseudo-Anosov dilatations. This perspective is investigated in \S\ref{sec:trainTracks}, where we describe a completely straightforward procedure to construct an invariant ``pretrack'' for any point-pushing homeomorphism; see Proposition~\ref{prop:invariantPretrack}. However, this approach is only partially successful because our methods do not produce a train track in general. Nevertheless, the pretrack $\pretrack{\gamma}$ is significant because it comes with an explicit incidence matrix $M_\gamma$ that depends only on the combinatorial structure of $\gamma$. In \S\ref{sec:biggest}, we use this matrix to establish the general upper bound in Theorem~\ref{thm:generalBounds}. In the case that $\pretrack{\gamma}$ is an actual train track, it provides a direct method for calculating the dilatation $\pushdil{\gamma}$. This framework is used in \S\ref{sec:upperbounds} to analyze concrete examples and prove the upper bounds on least dilatations in Theorems~\ref{thm:leastDilatationFixedGenus} and \ref{thm:leastDilatationVaryN}.

The pretrack $\pretrack{\gamma}$ is not able to provide a general lower bound on dilatation; see Remarks~\ref{rem:thePretrack} and \ref{rem:whyTracks}. Thus, in contrast to the our other calculation-based results, the lower bound in Theorem~\ref{thm:generalBounds} presents the primary theoretical difficulty. We prove this inequality in \S\ref{sec:lowerBound} by analyzing the action of $\push(\gamma)$ on simple closed curves and counting the exponential growth rate of intersection numbers. The technique is to lift to the universal cover and control the images of paths by studying the motion of the marked points.

\subsubsection*{Acknowledgments}
The author would like to thank his advisor, Benson Farb, for suggesting this project, for his guidance and insights, and for his constant encouragement and enthusiasm. The author is also grateful to Anna Marie Bohmann and Justin Malestein for their helpful comments on an earlier draft of this paper. Thanks also to the referee for their thorough comments and insightful recommendations.

\section{A lower bound on dilatation}\label{sec:lowerBound}

Our first objective is to establish a general lower bound on the dilatation of a point-pushing pseudo-Anosov map. The train track approach developed in \S\ref{sec:trainTracks} is inadequate for this purpose because it does not produce a train track in general, but only a pretrack. We instead estimate dilatations by examining the action on simple closed curves. Our primary tool in this endeavor is geometric intersection number.

\begin{defn}[Intersection number]\label{defn:intNum}
Let $a$ and $b$ be essential simple closed curves on a marked surface $(S,p)$, and let $\alpha$ and $\beta$ denote their respective isotopy classes in $(S,p)$. The \emph{geometric intersection number} of $a$ and $b$ is then defined as
\[ \intnum(a,b) = \inf \left\{\card{a'\cap b'} :   a'\in\alpha,~ b'\in\beta \right\}.\]
\end{defn}

We emphasize that isotopies of the marked surface $(S,p)$ are required to fix the basepoint $p$, and that essential simple closed curves in $(S,p)$ are the same as in $S\setminus\{p\}$. In these regards, the basepoint plays a similar role as a puncture. The connection between intersection number and pseudo-Anosov dilatation is made precise by the following theorem of Thurston; for a proof, see \cite[Theorem 12.2]{FLP79}.

\begin{thm}[Thurston]\label{thm:pAIntersectionNumber}
Let $f\in\Mod(S,p)$ be pseudo-Anosov with dilatation $\dilat{f} > 1$. For any two essential simple closed curves $a,b\subset (S,p)$, there is a constant $c\in (0,\infty)$ for which
\[\lim_{k\to \infty} \frac{\intnum(f^k(a),b)}{\lambda_f^k} = c.\]
\end{thm}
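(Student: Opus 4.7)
The plan is to realize the curves $f^k(a)$ as points in Thurston's space of measured foliations $\MFS$ and to exploit the hyperbolic dynamics of $f$ on $\MFS$ induced by its stable and unstable foliations. First I would recall the standard package: every weighted isotopy class of essential simple closed curves embeds in $\MFS$; geometric intersection number extends to a continuous pairing $\intnum(\cdot,\cdot)\colon\MFS\times\MFS\to\R_{\geq 0}$ that is homogeneous of degree $1$ in each variable and recovers the intersection number of curves; and the pseudo-Anosov $f$ acts continuously on $\MFS$ with invariant measured foliations $\mathcal{F}^u,\mathcal{F}^s$ satisfying $f\cdot\mathcal{F}^u=\lambda_f\,\mathcal{F}^u$ and $f\cdot\mathcal{F}^s=\lambda_f^{-1}\mathcal{F}^s$. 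A crucial property is that these foliations \emph{fill}: $\intnum(\mathcal{F}^s,\mu)>0$ for every $\mu\in\MFS$ that is not a nonnegative multiple of $\mathcal{F}^s$, and likewise for $\mathcal{F}^u$.

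The central convergence to prove is
\[ \lambda_f^{-k} f^k(\mu)\;\longrightarrow\;\intnum(\mu,\mathcal{F}^s)\cdot\mathcal{F}^u\qquad\text{in }\MFS,\]
valid for every $\mu\in\MFS$ (with $\mathcal{F}^u,\mathcal{F}^s$ normalized so that $\intnum(\mathcal{F}^u,\mathcal{F}^s)=1$). Granting this, I apply it with $\mu=a$. Because $a$ is an essential simple closed curve while $\mathcal{F}^s$ is an arational measured foliation, $a$ is not a positive multiple of $\mathcal{F}^s$, so $\intnum(a,\mathcal{F}^s)>0$. Continuity and bilinearity of the intersection pairing then yield
\[\lim_{k\to\infty}\frac{\intnum(f^k(a),b)}{\lambda_f^k} \;=\; \intnum\!\Bigl(\lim_{k\to\infty}\lambda_f^{-k}f^k(a),\,b\Bigr) \;=\; \intnum(a,\mathcal{F}^s)\cdot\intnum(\mathcal{F}^u,b),\]
and the right-hand side is a constant $c\in(0,\infty)$, the positivity following because $\mathcal{F}^u$ fills and $b$ is essential.

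The main obstacle is the convergence statement itself. The natural route is via train tracks: choose a train track $\tau$ carrying $\mathcal{F}^u$ and invariant under $f^{-1}$, so that $f$ acts on the cone $V_\tau\subset\MFS$ of transverse measures on $\tau$ via a primitive nonnegative integer matrix $M$ whose Perron eigenvalue is $\lambda_f$ and whose Perron eigenvector represents $\mathcal{F}^u$. A standard Perron--Frobenius argument shows that for any $v\in V_\tau$,
\[ \lambda_f^{-k} M^k v \;\longrightarrow\; (\ell^{\top}v)\,\mathcal{F}^u,\]
where $\ell$ is the left Perron eigenvector; and one identifies the functional $v\mapsto \ell^{\top}v$ with $\intnum(\,\cdot\,,\mathcal{F}^s)$ through the duality between $\tau$ and a dual train track carrying $\mathcal{F}^s$. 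To extend the conclusion from $V_\tau$ to all of $\MFS$, I would invoke the north--south dynamics of $f$ on $\PMF$: for any $\mu\in\MFS$ not projectively equal to $\mathcal{F}^s$, the iterates $f^k(\mu)$ eventually enter any preassigned projective neighborhood of $\mathcal{F}^u$, and in particular enter $V_\tau$, at which point the eigenvector estimate takes over and propagates forward. The degenerate case $\mu=c\mathcal{F}^s$ produces limit zero, consistent with $\intnum(\mathcal{F}^s,\mathcal{F}^s)=0$ and with our verification that this case does not arise for $a$.
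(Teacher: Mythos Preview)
The paper does not supply a proof of this theorem at all; it merely states the result and refers the reader to \cite[Theorem~12.2]{FLP79}. Your proposal is correct and is essentially the standard argument one finds there: embed curves into $\MFS$, use the north--south dynamics of the pseudo-Anosov on $\PMF$ together with the Perron--Frobenius structure of an invariant train track to prove $\lambda_f^{-k}f^k(\mu)\to\intnum(\mu,\mathcal{F}^s)\,\mathcal{F}^u$, and then conclude by continuity and bihomogeneity of the intersection pairing, with positivity coming from the filling property of $\mathcal{F}^s$ and $\mathcal{F}^u$.

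One small wording slip: you want a train track $\tau$ with $f(\tau)\prec\tau$ (i.e., $f$-invariant in the sense used in this paper), not ``invariant under $f^{-1}$''; this is what makes the incidence matrix $M$ represent the forward action of $f$ on the cone $V_\tau$ with Perron eigenvalue $\lambda_f$ and eigenvector $\mathcal{F}^u$. With that adjustment the argument is sound.
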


\begin{assm}\label{assm:Assumptions}
For the remainder of this section, we fix a genus $g$ surface $S = S_{g,n}$ with $n\geq 0$ punctures that satisfies $3g+n>3$. We fix a complete, finite-area hyperbolic metric on $S$ and a filling curve $\gamma \subset S$ that represents a primitive conjugacy class of $\pi_1(S)$; \emph{primitive} here means that $\gamma$ cannot be written as a power $\gamma=\mu^m$ in $\pi_1(S)$ for any $\abs{m}>1$. Upon adjusting $\gamma$ by a homotopy, we may assume that $\gamma$ is geodesic; this geodesic representative realizes the minimum self-intersection number $\intnum(\gamma)> 0$ in Definition~\ref{defn:selfIntNum}. Lastly, we fix an essential, geodesic, simple closed curve $\baseloop\subset S$. Since $\gamma$ is filling, the curve $\baseloop$ necessarily intersects $\gamma$ nontrivially.
\end{assm}

For any two basepoints $q,q'\in S$, the mapping class groups $\Mod(S,q)$ and $\Mod(S,q')$ are naturally isomorphic via an isomorphism that preserves pseudo-Anosov dilatation. Therefore, we are free to choose a basepoint $p\in \gamma$ which is not a self-intersection point of $\gamma$ and such that $p\notin\alpha$. With this basepoint, $\alpha$ becomes an essential simple closed curve in $(S,p)$. After parameterizing $\gamma\colon[0,1]\to S$ so that $\gamma(0) = p = \gamma(1)$, we obtain the point-pushing pseudo-Anosov $\push(\gamma)\in\Mod(S,p)$; our goal is to relate its dilatation $\pushdil{\gamma}$ to the self-intersection number $\intnum(\gamma)$. The hyperbolic metric gives a locally-isometric universal covering $\pi\colon\hyp\to S$, and we fix a preimage $\basepoint\in\pi\inv(p)$ to serve as the basepoint of $\hyp$. This defines an isometric action by deck transformations of the fundamental group $G = \pi_1(S,p)$ on $\hyp$. 

\paragraph{Strategy.} 
Our proof of the lower bound now proceeds in several steps. We first review the definition of $\push$ and build a representative point-pushing homeomorphism $\varphi_\gamma$ in the mapping class $\push(\gamma)$. The ultimate goal is then to study the images of $\baseloop$ under iteration by  $\varphi_\gamma$ and to count their intersection numbers with other curves on the marked surface $(S,p)$. Although it is relatively easy to describe these iterates using, for instance, the train track theory developed in \S\ref{sec:trainTracks} and \S\ref{sec:loopImages}, such representative curves do not aid in calculating intersection numbers because they need not realize the infimum in Definition~\ref{defn:intNum}; see Remark~\ref{rem:overCounting}.

To get around this difficulty, we lift everything to the universal cover $\hyp$ where it will be easier to understand the structure of $\varphi_\gamma^k(\baseloop)$. As discussed in \S\ref{sec:weaving}, the simple closed curves $\varphi_\gamma^k(\baseloop)$ lift to infinite paths $\liftimage{k}$ in $\hyp$, and the process of point-pushing on $S$ lifts to a procedure that we call ``weaving'' in hyperbolic space. In this setting, our goal is to study the paths $\liftimage{k}$ obtained by weaving and to relate their complexity to the self-intersection number $\intnum(\gamma)$.

Making these ideas precise involves many technical tools that we develop over the next several subsections. In \S\ref{sec:divergence} we introduce a tree $\tree$ that will serve as a sort of coordinate system for $\hyp$, and in \S\ref{sec:chainsAndPaths} we develop the technical devices that will help us navigate through $\tree$. As we will see in Observation~\ref{obs:countingIntersections}, bounding the intersection numbers $\intnum(\varphi_\gamma^k(\baseloop),\beta)$ on $(S,p)$ roughly translates into showing that all paths in the isotopy class of $\liftimage{k}$ must cross many edges of $\tree$.

To prove that this is the case, we develop the notion of a constraint on $\liftimage{k}$; this is essentially a marked point in $\hyp$ that forces every path in the isotopy class of $\liftimage{k}$ to visit a particular vertex of $\tree$. The relevant machinery for working with constraints is developed in \S\ref{sec:pushing}; we then give a recursive construction in \S\ref{sec:pointsThatPush} that identifies exponentially many constraints. Finally, in \S\ref{sec:countingIntersections}, we count intersection numbers and establish a lower bound on the dilatation $\pushdil{\gamma}$.

\subsection{Setting the stage: weaving in hyperbolic space}\label{sec:weaving}\label{point-pushingDef}

The first step in our proof is to translate the idea of point-pushing on the surface $S$ to its analogue in the universal cover $\hyp$; it is in this setting that that we will ultimately be able to understand the iterates of $\baseloop$ and count their intersection numbers with other curves. In this subsection we quickly review the construction of the point-pushing homomorphism $\push$ and choose a particular representative $\varphi_\gamma$ of the mapping class $\push(\gamma)$. We then lift this point-pushing homeomorphism to a ``weaving homeomorphism'' $\tilde{\varphi}_\gamma$ in $\hyp$ and introduce the relevant intuition and notation for understanding its structure.

A closed loop $\beta\colon[0,1]\to S$ based at $p = \beta(0) = \beta(1)$ defines an ``isotopy of maps'' $f_t\colon\{p\}\to S$ given by $f_t(p) = \beta(t)$; this may be extended to an isotopy $F_t\colon S\to S$ of the whole surface that effectively ``pushes'' the basepoint $p$ along the path $\beta$ and drags the rest of the surface along. At the end of this isotopy one obtains a point-pushing homeomorphism $\varphi_\beta \colonequals F_1$ that is well-defined up to isotopy in $(S,p)$. Furthermore, as shown by Birman \cite{Birman69,Birman74}, the corresponding isotopy class $[\varphi_\beta]$ depends only on the homotopy class of $\beta$, and the assignment $\beta\mapsto [\varphi_\beta]$ descends to an injective group homomorphism $\push\colon\pi_1(S,p)\to \Mod(S,p)$ called the \emph{point-pushing homomorphism}. We remark that, with our definition, $\push$ is technically an {\em anti}-homomorphism.

For those unfamiliar with point-pushing, it is instructive to consider a simple closed curve $\beta\subset S$, in which case $\push(\beta)$ is just the composition of two Dehn twists (in opposite directions) about the boundary curves of a tubular neighborhood of $\beta$; see \cite[\S4.2]{FarbMargalit}.

Returning now to the filling curve $\gamma \subset S$ chosen in Assumptions~\ref{assm:Assumptions}, we fix, once and for all, an isotopy $F_t\colon S\to S$ satisfying $F_t(p)=\gamma(t)$ and a point-pushing homeomorphism $\varphi_\gamma\colonequals F_1$ that represents the mapping class $\push(\gamma)\in\Mod(S,p)$. Since we are interested in iterating $\varphi_\gamma$, we extend the isotopy $F_t$, via the relation $F_{t+1} =F_t\circ F_1$, so as to be defined for all times $t\in \R$; with this convention we have that $\varphi_\gamma^k = F_k$ for all $k\in \Z$. The isotopy $F_t$ and homeomorphism $\varphi_\gamma$ will remain fixed for the duration of \S\ref{sec:lowerBound}. 

Lifting $F_t$ to the universal cover yields an isotopy $\tilde{F}_t\colon \hyp\to\hyp$ between the identity and a homeomorphism $\tilde{\varphi}_\gamma\colonequals \tilde{F}_1$; this map $\tilde{\varphi}_\gamma$ is the unique lift of the point-pushing map $\varphi_\gamma$ that sends the basepoint $\basepoint$ to its image under the deck transformation $\gamma\in G$. We think of $\tilde{\varphi}_\gamma$ as a ``weaving homeomorphism'' for reasons which will soon become evident. 

In attempts to avoid the confusing situation of ``moving'' the basepoint $p$ throughout the point-pushing procedure, we introduce the notion of a dynamic marked point. The basepoint $p\in S$ remains stationary while the isotopy $F_t$ instead pushes the marked point around $\gamma$. Thus the marked point's location at time $t$ is given by $F_t(p)$, and this location agrees with the basepoint if and only if $t$ is an integer. This concept of a dynamic marked point will be made precise in Definition~\ref{defn:markedPoint} below.

Throughout we will suppress the distinction between a path $\beta\colon[0,1]\to S$ and its image $\beta = \beta([0,1])\subset S$; paths that differ by a reparameterization will not be considered distinct. A \emph{lift} of a closed loop $\beta\colon\R/\Z \to S$ is any path $\tilde{\beta}\colon\R\to \hyp$ that cyclically covers $\beta$; if $\beta$ is a simple loop, then its lifts are exactly the connected components of $\pi\inv(\beta)$. We denote the set of lifts of our chosen filling curve $\gamma$ by 
\begin{equation}\label{eqn:liftGamma}
\liftgamma = \{\tilde{\gamma} \subset\hyp \mid \tilde{\gamma}\colon\R\to \hyp \text{ covers }\gamma\colon \R/\Z\to S\}.
\end{equation}
Since $\gamma$ is a geodesic loop, the elements of $\liftgamma$ are infinite geodesic lines. We will use the following notation to discuss lifts of paths to $\hyp$.

\begin{notn}
If $\eta\colon[0,1]\to S$ is a path in $S$ starting at a point $x_0 = \eta(0)$, then for any preimage $x\in\pi\inv(x_0)$ we let $\lpath{x}{\eta}\colon[0,1]\to \hyp$ denote the unique path lift of $\eta$ starting at $x$. The terminal endpoint of this path will be denoted by $x\cdot\eta \colonequals \lpath{x}{\eta}(1)$. A deck transformation $h\in G$ acts on the set of such paths by changing the starting point: $h(\lpath{x}{\eta}) = \lpath{hx}{\eta}$. 

If we consider loops $\eta\subset S$ based at $x_0$, then the pairing $(x,\eta)\mapsto x\cdot \eta$ defines a right action of $\pi_1(S,x_0)$ on the set $\pi\inv(x_0)$, that is, $(x\cdot \eta)\cdot\mu = x\cdot(\eta\mu)$ for $\eta,\mu\in\pi_1(S,x_0)$. This right action commutes with the left action of $h\in G$ in the sense that $h(x\cdot \eta) = (hx)\cdot \eta$. If $\eta\subset S$ is a loop without a natural basepoint and $x\in\pi\inv(\eta)$ is any point in its preimage, then $\lpath{x}{\eta}$ is understood to mean $\lpath{x}{\hat{\eta}}$, where $\hat{\eta}\colon[0,1]\to S$ is any parameterization of $\eta$ based at $\pi(x)$.
\end{notn}

On the surface, the isotopy $F_t$ pushes the marked point $p$ along the curve $\gamma$. Therefore, in the universal cover, we consider each preimage $x\in \pi\inv(p)$ to be a marked point of $\hyp$ and find that $\tilde{F}_t$ has the effect of pushing $x$ along the path $\lpath{x}{\gamma}$ to the point $x\cdot \gamma$. Since $\gamma$ has self-intersections, each of its lifts $l\in \liftgamma$ intersects infinitely many other lifts. Thus the full preimage $\pi\inv(\gamma)$ is an infinite grid of intersecting geodesics, and the isotopy $\tilde{F}_t$ simultaneously pushes all of the marked points along their corresponding lifts in an intertwining pattern that resembles weaving on an infinite loom.

We are concerned with the images of our simple closed curve $\baseloop\subset (S,p)$ under iteration by $\varphi_\gamma$. Since $\intnum(\baseloop,\gamma) > 0$, each lift of $\baseloop$ is a geodesic line that intersects infinitely many lifts of $\gamma$. We may therefore choose a particular lift $\tilde{\baseloop}$ that intersects the segment $\lpath{\basepoint}{\gamma}$, where $\basepoint$ is the chosen basepoint of $\hyp$. We denote this particular lift by $\baselift$, and denote its image under $\tilde{F}_t$ by
\begin{equation*}
\liftimage{t} = \tilde{F}_t(\baselift).
\end{equation*}

Recall that the closed curves $\varphi_\gamma^k(\baseloop)$ naturally live in the marked surface $(S,p)$, meaning that their isotopy classes are determined up to isotopies that fix the basepoint $p\in S$ (in the unmarked surface $S$, the curves $\varphi_\gamma^k(\baseloop)$ are all isotopic to $\baseloop$). Analogously, the infinite paths $\liftimage{k}=\tilde{\varphi}_\gamma^k(\baselift)$ naturally live in the \emph{marked space} $(\hyp,\pi\inv(p))$---here all homeomorphisms and isotopies are required to preserve the set $\pi\inv(p)$. Equivalently, one may think of the punctured space $\hyp\setminus\pi\inv(p)$. Notice that $\tilde{F}_t$ is not an isotopy of $(\hyp,\pi\inv(p))$, but its terminal homeomorphism $\tilde{\varphi}_\gamma= \tilde{F}_1$ is a homeomorphism of $(\hyp,\pi\inv(p))$. Since $\tilde{\varphi}_\gamma^k$ and $\baselift$ cover $\varphi_\gamma^k$ and $\baseloop$, respectively, it follows that $\pi(\liftimage{k}) = \varphi_\gamma^k(\baseloop)$; therefore we may use the paths $\liftimage{k}\subset (\hyp,\pi\inv(p))$ to study the iterates $\varphi_\gamma^k(\baseloop)\subset (S,p)$ of $\baseloop$.

\begin{figure}
\centering
\subfigure[A pushing curve $\gamma$ with $\intnum(\gamma)=2$, and a simple closed curve $\baseloop\subset S_3$.]
{
\labellist
\small\hair 2pt
\pinlabel $p$ [tr] at 87 53
\pinlabel $\gamma$ [bl] <0pt,-2pt> at 130 96
\pinlabel $\alpha$ [bl] <0.5pt,0pt> at 127 30
\pinlabel $q_1$ [bl] <-2pt,1pt> at 149 66
\pinlabel $q_2$ [r] <1pt,0pt> at 71 73
\endlabellist
\includegraphics[scale=1.0]{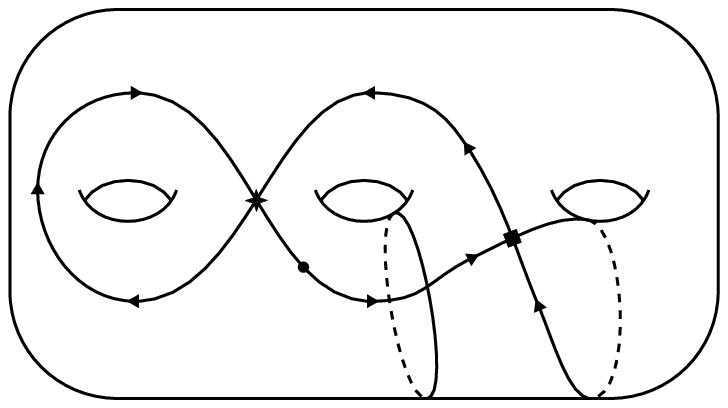}
\label{fig:simpleWeaveSetup}
}
\subfigure[Push $p$ once around $\gamma$ to obtain $\varphi_\gamma(\baseloop)$.]
{
\labellist\small\hair 2pt
\pinlabel {$\varphi_\gamma(\alpha)$} [bl] at 132 98
\endlabellist
\includegraphics[scale=1.0]{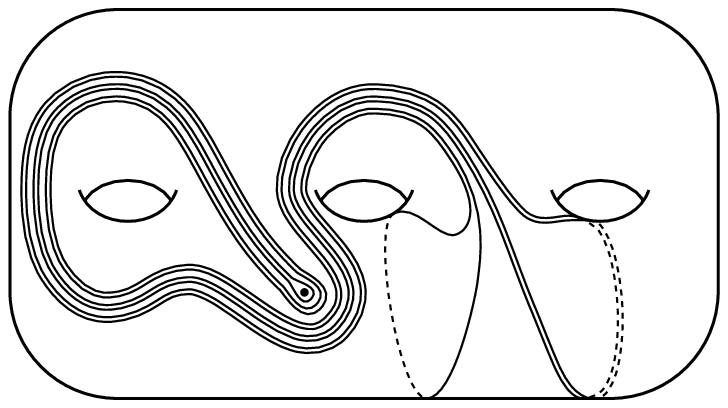}
\label{fig:simpleWeavePush1}
}
\subfigure[Lift to $\hyp$: The marked points $\tilde{p}$ simultaneously slide along the lines $\tilde{\gamma}$ and push the initial lift $\baselift=\tilde{\baseloop}$. The resulting path is $\tilde{\varphi}_\gamma(\baselift)$.]
{
\labellist
\small\hair 2pt
\pinlabel {$\basepoint$} [tr] at 42 51
\pinlabel {$\tilde{\gamma}$} [l] at 160 146
\pinlabel {$\tilde{p}$} [l] at 160 130
\pinlabel {$\tilde{q_1}$} [l] at 200 146
\pinlabel {$\tilde{q_2}$} [l] at 200 130
\pinlabel {$\baselift$} [r] at 42 78
\pinlabel {$\tilde{\varphi}_\gamma(\baselift)$} [bl] at 337 63
\endlabellist
\includegraphics[scale=1.0]{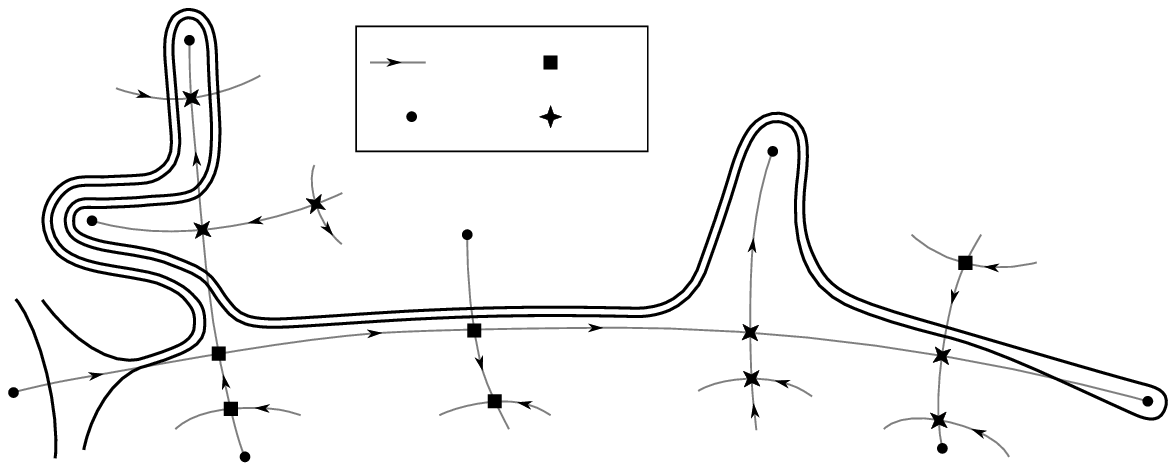}
\label{fig:weavePushUpstairs}
}
\caption{An example of point-pushing on $S_3$ and weaving in $\hyp$.}
\label{fig:simpleWeaveExample}
\end{figure}

Given explicit choices of $\baseloop$ and $\gamma$, it is relatively straightforward to determine $\varphi_\gamma(\baseloop)$ and $\tilde{\varphi}_\gamma(\baselift)$: simply move the marked points along their respective paths and push $\baseloop$ or $\baselift$ along. An illustrative example is depicted in Figure~\ref{fig:simpleWeaveExample}. Here we consider a (non-filling) pushing curve $\gamma\subset S_3$ with two self-intersection points $q_i$ and a simple closed curve $\baseloop$ that intersects $\gamma$ exactly once. As one can check, pushing the marked point once around $\gamma$ transforms $\baseloop$ into the the curve $\varphi_\gamma(\baseloop)$ shown in Figure~\ref{fig:simpleWeavePush1}. In the universal cover, all of the marked points $\tilde{p}\in\pi\inv(p)$ flow simultaneously along the lines $\tilde{\gamma}$ comprising the grid $\pi\inv(\gamma)$. As they travel, some of these points interact with the lift $\baselift$ and
drag it along with them. The resulting path $\tilde{\varphi}_\gamma(\baselift)$, as shown in Figure~\ref{fig:weavePushUpstairs}, is forced to bend around these marked points.

This example exhibits the following key features. The curve $\varphi_\gamma(\baseloop)\subset (S, p)$ is already quite complicated---the marked point is pushing nine strands of $\varphi_\gamma(\baseloop)$, and it is difficult to keep track of which strand is which and how it got there. The structure is more transparent when one unwinds this picture in the universal cover; here we see that these nine strands come from different parts of $\hyp$ and are being pushed in diverging directions by multiple marked points.

This essential observation is the foundation of our entire argument: On the surface there is only one marked point, but in the universal cover there are many marked points pushing the path $\liftimage{t}$ in various directions. By identifying these marked points and keeping track of their locations, we will be able to quantify the complexity of $\varphi_\gamma^k(\baseloop)$, estimate intersection numbers, and establish a lower bound on dilatation. To formalize these ideas, we need a more careful definition of the dynamic marked points.

Intuitively, a dynamic marked point should be a point that moves. For instance, a marked point located at $x\in\hyp$ at time $t$ might move to a new location $y\in \hyp$ at some later time $t' > t$. We need a consistent naming scheme so that a given dynamic marked point has the same name no matter where it is. Clearly the necessary data is captured by a continuous function $\R\to\hyp$, i.e., a path, which records the marked point's position at any given time. In our situation, the relevant marked points are those coming from lifts of the basepoint $p\in S$, and we have a fixed isotopy $\tilde{F}_t\colon \hyp\to\hyp$ describing exactly how these points move.

\begin{defn}[Dynamic marked point]\label{defn:markedPoint}
A \emph{dynamic marked point} in $\hyp$ is a function $\rho\colon \R \to \hyp$ of the form $\rho(t) = \tilde{F}_t(\tilde{p})$, where $\tilde{p}\in\pi\inv(p)$ is any lift of the basepoint. The set of marked points is denoted by $\marked$. The \emph{location} of a marked point $\rho\in\marked$ at a time $t\in \R$ is simply its value $\rho(t)$, and the set of all these locations is denoted by $\marked_t = \{\rho(t) \mid \rho\in\marked\}$.
\end{defn}

While dynamic marked points are ostensibly geodesic paths in $\hyp$, we prefer to think of them as points that physically move with respect to time. To recall the path-like nature of  $\rho\in \marked$, we may simply consider its image $\rho(\R)$, which is a geodesic in the set $\liftgamma$. At each time $k\in \Z$ the set $\marked_k$ of locations is exactly equal to $\pi\inv(p)$; thus the map $\rho\mapsto\rho(k)$ provides a natural bijection $\marked\cong \pi\inv(p)$. However, we stress that all of the bijections obtained in this way are distinct. 

In \S\S\ref{sec:pushing}--\ref{sec:pointsThatPush} we will give precise meaning to the concept of a dynamic marked point that ``pushes $\liftimage{t}$'' and find many points that have this pushing property. In order to do so, it will be important to keep track of the locations of the dynamic marked points and their relative positions to each other. To this end, we construct a tree that will serve as a sort of coordinate system for $\hyp$. This tree and its properties are the business of the next subsection.

\subsection{Uniform divergence in the coordinate tree $\tree$}\label{sec:divergence}

Choose a pants decomposition $\pants$ of $S$, that is, a maximal collection $\pants = \{\seam\}$ of homotopically distinct, disjoint, essential, simple closed curves $\seam\subset S$. Any such collection contains exactly $3g-3+n>0$ curves; in particular, our assumption on $S=S_{g,n}$ ensures that $\pants$ is nonempty. By a slight abuse of notation, the subset $\cup_i\seam\subset S$ will also be denoted by $\pants$. The curves $\seam$ may be chosen to be closed geodesics for the hyperbolic metric on $S$ and, after adjusting the choice of basepoint $p\in \gamma$ if necessary, we may furthermore assume that $p\notin \pants$ so that each $\seam$ defines an essential simple closed curve in the marked surface $(S,p)$. 

The connected components of $\liftpants \colonequals \pi\inv(\pants)$ are geodesic lines that cut $\hyp$ into infinitely many components. Dual to this decomposition of $\hyp$ there is a tree $\tree = \tree_\pants$ whose vertices $v\in\vertices$ are the connected components of $\hyp\setminus\liftpants$ and whose oriented edges $e\in \edges$ are ordered pairs $e=(v_0,v_1)$ of vertices corresponding to regions that share a boundary component.\footnote{Note that $\tree$ is just the Bass--Serre tree for the graph of groups description of $\pi_1(S)$ corresponding to the pants decomposition $\pants$ of $S$; see, for example, \cite{ScottWall79}.} This same edge with the reverse orientation will be denoted by $\bar{e} = (v_1,v_0)$. The unoriented edges of $\tree$ are in bijective correspondence with the components of $\liftpants$; accordingly, we will often suppress the distinction between edges in $\tree$ and these geodesics in $\hyp$. 

An oriented \emph{edge path} in $\tree$ is a (possibly bi-infinite) ordered list $(e_1,\dotsc,e_n)$ of oriented edges $e_i\in \edges$ satisfying the condition that the terminal vertex of $e_i$ is the initial vertex of $e_{i+1}$. An edge path is \emph{geodesic} if it is without backtracking, that is, if $e_{i+1} \neq \bar{e_i}$ for each $i$. Applying the Jordan curve theorem to a geodesic in $\liftpants$, we see that each edge $e\in\edges$ separates $\tree$ into two connected components. Thus $\tree$ is in fact a tree, meaning that there is a unique geodesic between any two vertices. The \emph{length} of a finite edge path $e=(e_1,\dotsc,e_n)$ is denoted by $\len(e) = n$; this defines a path metric on $\tree$. The action of $G$ on $\hyp$ descends to an isometric action on $\tree$, and there is a natural, $G$-equivariant projection $\proj\colon\hyp\to \tree$ that collapses the components of $\hyp\setminus\liftpants$ and $\liftpants$ to vertices and edges, respectively. Since a non-elliptic isometry of $\hyp$ can preserve at most one geodesic line, we see that the fixed set in $\tree$ of a nontrivial deck transformation $h\in G$ contains at most a single edge.

Any oriented path $\mu\subset \hyp$ that is transverse to $\liftpants$ with endpoints in $\hyp\setminus\liftpants$ projects to an edge path in $\tree$. If $\mu$ is a geodesic path, then so is $\proj(\mu)$, and we use $\len(\mu)$ to denote $\len(\proj(\mu))$. The $\tree$-length of a loop $\beta\subset S$ is similarly defined by $\len(\beta) = \len(\proj(\lpath{x}{\beta}))$, where $x$ is any point in $\pi\inv(\beta)\setminus\liftpants$. With this notation, $\len(\gamma) = \sum_{i}\intnum(\gamma,\seam)$ is the number of times the loop $\gamma$ crosses the curves in $\pants$. Since $\gamma$ fills $S$, we have that $\len(\gamma)\geq \card{\pants} \geq 1$.

Geodesic lines $l,l'\subset\hyp$ with distinct endpoints in $\partial\hyp$ necessarily diverge when projected to $\tree$ in the sense that they determine distinct edge paths. Nevertheless, these projections may agree along an arbitrarily long edge path. The following crucial lemma shows that, for geodesics in an equivariant family, the divergence in $\tree$ happens uniformly quickly. In \S\ref{sec:chainsAndPaths}, we will apply this to lifts of the geodesic $\gamma$.

\begin{lem}[Uniform divergence]\label{lem:boundOnDivergence}
Let $l,l'\subset\hyp$ be two distinct lifts of a closed geodesic $\beta\subset S$, and let $X = \proj(l)\cap \proj(l')$ be the intersection of their projections to $\tree$. Then $X$ is a (possibly empty or degenerate) geodesic edge path of length $\len(X)\leq \len(\beta)+1$. In the case that $\len(\beta)\leq 2$, this bound may be improved to $\len(X) \leq \len(\beta)$.
\end{lem}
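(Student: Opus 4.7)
\emph{Setup.} The strategy is to realize $\proj(l)$ and $\proj(l')$ as axes of two conjugate hyperbolic isometries of $\tree$ with the same translation length $\len(\beta)$, and then to use the observation recorded earlier that any nontrivial deck transformation fixes at most one edge of $\tree$. The case $\len(\beta)=0$ is immediate (both projections collapse to vertices, so $\len(X)=0$), so assume $\len(\beta)\ge 1$. Write $l'=h\cdot l$ for some $h\in G$, and let $g\in G$ be the deck transformation that stabilizes $l$ and translates $l$ by one period of $\beta$; then $g$ acts on $\tree$ as translation by $\len(\beta)$ along its axis $A_g\colonequals\proj(l)$, while the conjugate $g'\colonequals hgh\inv$ translates $A_{g'}\colonequals\proj(l')$ by the same amount. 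Since $\tree$ is a tree, $X\colonequals A_g\cap A_{g'}$ is a geodesic segment; it cannot be biinfinite, for that would force $A_g=A_{g'}$ and hence $l=l'$.

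\emph{Counting fixed edges.} Label the $N\colonequals\len(X)$ edges of $X$ consecutively as $e_1,\dotsc,e_N$. The translations of $g$ and $g'$ on $X$ are each of length $\len(\beta)$, though possibly in opposite directions; accordingly I form the element $s\colonequals g^{\pm 1}g'$, choosing the sign so that the two actions on $X$ line up. The key observation is that whenever both $e_i$ and $e_{i+\len(\beta)}$ lie in $X$, both factors of $s$ send $e_i$ to $e_{i+\len(\beta)}$, so $s$ fixes $e_i$; this provides $N-\len(\beta)$ fixed edges. When $N-\len(\beta)\ge 2$ the ``at most one edge'' fact forces $s=1$, so $g'=g^{\mp 1}$; but $g$ and $g\inv$ share the common axis $l$, giving $l'=l$ and a contradiction. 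This yields the general bound $\len(X)\le\len(\beta)+1$.

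\emph{Improvement and main obstacle.} To obtain the sharper bound $\len(X)\le\len(\beta)$ when $\len(\beta)\le 2$, I would rule out the borderline case $N=\len(\beta)+1$. There $s$ is nontrivial but fixes only a single edge $e$, so must be a nontrivial power of its cyclic stabilizer: $s=\tau^n$ with $n\ne 0$, where $\tau$ is the deck transformation about the pants curve $c=\pi(e)$. The resulting relation in $\pi_1(S,p)$ abelianizes to $n[c]=0$ in the same-direction subcase (an immediate contradiction as soon as $c$ is non-separating), and only to $n[c]=\pm 2[\beta]$ in the opposite-direction subcase. The configurations in which abelianization is silent---separating pants curves, and the opposite-direction subcase---form the main obstacle: they require a direct geometric argument, exploiting the very constrained combinatorics of a loop crossing $\pants$ at most twice in one period, to eliminate the borderline configuration. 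By contrast, the general bound itself is essentially automatic from the translation-length picture once the setup is in place.
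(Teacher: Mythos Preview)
Your argument for the main bound $\len(X)\le\len(\beta)+1$ is correct in spirit and takes a genuinely different route from the paper. The paper chooses specific orbit points $x_0,x_1\in l$ and $x'\in l'$ lying in prescribed vertices of an overlong overlap, builds deck transformations $f_0,f_1$ with $f_j(x')=x_j$, and then runs a case analysis on whether $l$ and $l'$ induce the same or opposite orientations on $X$; each case manufactures a nontrivial element fixing two distinct edges. Your axis/translation-length argument is more conceptual and sidesteps the orientation case split. One small slip: if both factors $g^{\pm1}$ and $g'$ send $e_i$ to $e_{i+\len(\beta)}$, then their \emph{product} sends $e_i$ to $e_{i+2\len(\beta)}$; the element that fixes $e_i$ is $(g^{\pm1})^{-1}g'$, not $g^{\pm1}g'$. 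This is harmless once corrected. (Your remark that $A_g=A_{g'}$ forces $l=l'$ also tacitly uses the fixed-edge argument you give in the next paragraph; you might reorder so this is not a forward reference.)

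The improvement for $\len(\beta)\le2$ is, as you say, incomplete: the homological approach stalls when the relevant pants curve is separating and in the opposite-orientation subcase. The paper closes this gap not via $H_1$ but by pushing the same concrete $f_0,f_1$ analysis one step further under the weaker hypothesis that $X$ contains $(e_0,\dotsc,e_n)$. In the opposite-orientation case the general argument already goes through verbatim, since it never used the edge $e_{n+1}$. In the same-orientation case with $n\le2$ there are only two possible positions for $x'$: if $x'\in v_1$ one checks that $f_0$ would fix both $e_0$ and $e_1$; if $n=2$ and $x'\in v_2$ one computes that $f_1f_0$ fixes both $e_1$ and $e_2$. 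So the sharpening really does hinge on the small number of combinatorial configurations available when $\len(\beta)\le2$, confirming your intuition, but the efficient route is through the explicit $f_j$ rather than through homology.
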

\begin{proof}
If $\beta\in\pants$, then $l$ and $l'$ correspond to distinct edges of $\tree$ and we have $X = \emptyset$. Therefore, we may assume that $\beta\notin\pants$, in which case $\len(\beta)=\sum\intnum(\beta,\seam)\geq 1$ because $\pants$ is a pants decomposition of $S$. It follows that $\proj(l)$ and $\proj(l')$ are both bi-infinite geodesic edge paths in $\tree$. Their intersection $X$ is clearly a geodesic edge path as well.

Suppose, on the contrary, that $\len(X) \geq n+2$, where $n = \len(\beta)$. Then $X$ contains a subpath of the form $(e_0,e_1,\dotsc,e_n,e_{n+1})$. Writing $e_j = (v_j,v_{j+1})$, we choose a generic point $x_0\in l$ that is contained in $v_1$ and does not project to a self-intersection point of $\beta$ on $S$. Since points in the $G$-orbit of $x_0$ occur with spacing $\len(\beta)=n$ along both $l$ and $l'$, we may find orbit points $x_1\in l$ and $x'\in l'$ with $x_1\in v_{n+1}$ and $x'\in v_{i}$ for some $1\leq i\leq n$. The situation is depicted in Figure~\ref{fig:boundOnTracking}. Letting $f_0,f_1\in G$ be the deck transformations defined by $f_j(x')= x_j$, we see that $h=f_1f_0\inv$ and the hyperbolic translation along $l$ sending $x_0$ to $x_1$ both agree at the point $x_0$---therefore they are equal. Since $f_j\inv(l) = l' \neq l$, the isometry $f_j$ does not preserve the axis $l$ of $h$ and therefore cannot commute with $h$: $f_jh \neq hf_j$. 

\begin{figure}
\centering
\labellist
\small\hair 3pt
\pinlabel {$l$} [tr] at 62 398
\pinlabel {$x_0$} [tr] <3pt,-1pt> at 110 388
\pinlabel {$x_1$} [tl] <-1pt,-1pt> at 481 397
\pinlabel {$l'$} [br] <1pt,-1pt> at 68 283
\pinlabel {$x'$} [bl] <-2pt,1pt> at 243 323
\pinlabel {$e_0$} [l] at 75 257
\pinlabel {$e_1$} [l] at 131 276
\pinlabel {$e_{i-1}$} [l] at 220 288
\pinlabel {$e_{i}$} [l] at 278 293
\pinlabel {$e_n$} [r] <2pt,0pt> at 470 292
\pinlabel {$e_{n+1}$} [l] <1pt,-2pt> at 522 289
\pinlabel {$f_0$} [t] <-1pt,0pt> at 157 352
\pinlabel {$f_1$} [tl] <-4pt,-1pt> at 441 362
\pinlabel {$f_0(x_1)$} [t] <2pt,-1pt> at 342 429
\pinlabel {$f_1(x_0)$} [b] <1pt,1pt> at 344 469
\endlabellist
\includegraphics[width=4in]{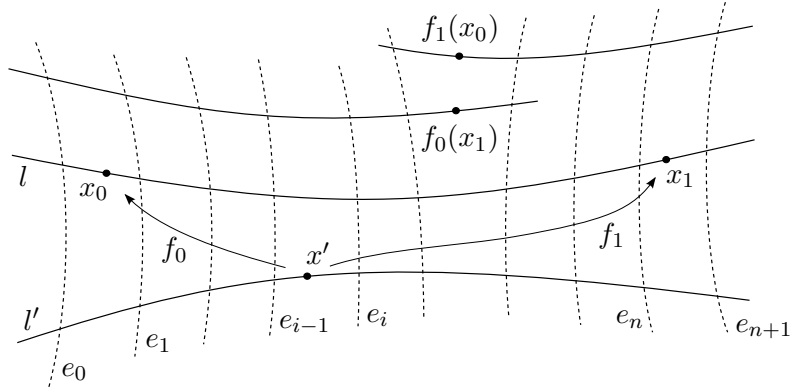}
\caption{The two edge paths $\proj(l)$ and $\proj(l')$ cannot overlap too much, for otherwise the deck transformation sending $f_0(x_1)$ to $f_1(x_0)$ would fix two edges of $\tree$.}
\label{fig:boundOnTracking}
\end{figure}

An orientation on $\beta\colon[0,1]\to S$ lifts to natural orientations on $l$ and $l'$ which in turn induce orientations on $X$. We may assume that $l$ induces the orientation $(e_0,\dotsc,e_{n+1})$ on $X$, so that the path $\lpath{x_0}{\beta}$ crosses the edges $e_1$, $e_2$, \dots, $e_n$ in order. 

\textbf{Case 1:} \emph{The geodesic $l'$ induces the opposite orientation on $X$.} In this case, $\lpath{x'}{\beta}$ projects to a (finite) edge path of the form $(\overline{e_{i-1}},\overline{e_{i-2}},\dotsc,\overline{e_0},\dotsc)$. Consequently, since $f_0$ maps $\lpath{x'}{\beta}$ equivariantly onto $\lpath{x_0}{\beta}$, we see that $f_0((\overline{e_{i-1}},\dotsc,\overline{e_{0}})) = (e_1,\dotsc,e_i)$. In particular, we have $f_0(\overline{e_0}) = e_i$. Reversing direction and considering the first edges crossed by the paths $\lpath{x_0}{\bar{\beta}}$ and $\lpath{x'}{\bar{\beta}}$, where $\bar{\beta}$ denotes $\beta$ with the opposite orientation, we similarly find that $f_0(e_i) = \overline{e_0}$. As $G = \pi_1(S)$ is torsion-free, this implies that $f_0^2$ is a nontrivial deck transformation preserving the distinct edges $\overline{e_0}$ and $e_i$---a contradiction.

\textbf{Case 2:} \emph{The geodesics $l$ and $l'$ induce the same orientation on $X$.} We now have that $\lpath{x'}{\beta}$ and $\lpath{x'}{\bar{\beta}}$ project to edge paths of the form $(e_i,e_{i+1},\dotsc)$ and $(\overline{e_{i-1}},\overline{e_{i-2}},\dotsc)$, respectively. As $f_0$ maps $\lpath{x'}{\beta}$ equivariantly onto $\lpath{x_0}{\beta}$, we see that $f_0((e_{i},\dotsc,e_{n+1})) = (e_1,\dotsc, e_{n+2-i})$. In particular, $f_0((e_{n},e_{n+1})) = (e_{n+1-i},e_{n+2-i})$. Similarly, we have $f_1((\overline{e_{i-1}},\dots,\overline{e_{0}})) = (\overline{e_{n}},\dots,\overline{e_{n+1-i}})$, so that $f_1((\overline{e_1},\overline{e_0}))= (\overline{e_{n+2-i}},\overline{e_{n+1-i}})$. Therefore
\[f_0h((e_0,e_1)) = f_0((e_{n}, e_{n+1})) = (e_{n+1-i},e_{n+2-i}) = f_1((e_0,e_1)).\]
Since $f_1= hf_0$, this shows that the distinct deck transformations $f_0 h$ and $hf_0$ both send $e_0\mapsto e_{n+1-i}$ and $e_1\mapsto e_{n+2-i}$, which is impossible.

It remains to prove the stronger inequality in the case that $n = \len(\beta)\leq 2$. We proceed as above assuming only that $X$ contains a subpath of the form $(e_0,\dotsc,e_n)$; the setup and notation are otherwise unchanged. The arguments in Case 1 are still valid because they do not involve the edge $e_{n+1}$. Thus it suffices to assume that $l$ and $l'$ induce the same orientation on $X$. The case $x'\in v_1$ then yields an immediate contradiction because $f_0$ cannot preserve both $e_0$ and $e_1$. The remaining possibility $x'\notin v_1$ necessitates $n=2$ and $x'\in v_2$, in which case we find that 
$f_1$ sends $(\overline{e_1},\overline{e_0})$ to $(\overline{e_2},\overline{e_1})$ and that $f_0\inv$ sends $(e_0,e_1)$ to $(e_1,e_2)$. It now follows that $f_1f_0((e_1,e_2)) = f_1((e_0,e_1)) = (e_1,e_2)$, contradicting the fact that $f_1f_0$ is nontrivial.
\end{proof}

\subsection{$\liftgamma$-chains and grid paths}\label{sec:chainsAndPaths}

As we flow the isotopy $\tilde{F}_t$, the dynamic marked points travel along lifts of our fixed geodesic $\gamma$ and pass each other at the intersections of these lines. Recall from (\ref{eqn:liftGamma}) that the set of all lifts of $\gamma$ is denoted by $\liftgamma$. In order to describe the locations and interactions of the dynamic marked points that ``push $\liftimage{t}$,'' we will need to consider paths in $\hyp$ that travel along geodesics in $\liftgamma$ and potentially turn at their intersections. These turning paths will lead us to the desired dynamic marked points and show that many such points exist. \

\begin{defn}[Grid path]\label{defn:gridPath}
A \emph{grid path} is a concatenation $\mu= \mu_1\dotsb\mu_n$ of oriented geodesic segments $\mu_i\subset l_i\in \liftgamma$ whose endpoints ``match up,'' that is, the terminal endpoint of $\mu_i$ is the initial endpoint of $\mu_{i+1}$. The $\mu_i$ are called the \emph{straight segments} of $\mu$, and we require that adjacent segments segments $\mu_i$ lie on distinct geodesics $l_i$.
\end{defn}

While the projection of a grid path may backtrack in $\tree$ when it turns at the junction of two straight segments, the extent of this backtracking is universally bounded by Lemma~\ref{lem:boundOnDivergence}. Therefore, by making the straight segments $\mu_i$ sufficiently long, we may effectively disregard any backtracking because it will be contained within a bounded neighborhood of the endpoints of the $\proj(\mu_i)$. This will ensure that the projection of each straight segment $\mu_i$ contributes definite progress in $\tree$. The following definitions and lemma make this precise.

\begin{defn}[$\liftgamma$-chain]\label{defn:chain}
A \emph{$\liftgamma$-chain} is an ordered tuple $(l_1,\dotsc,l_n)$ of
distinct geodesics $l_i\in \liftgamma$ which satisfy the property that $l_i$ and $l_j$ intersect if and only if $\abs{i-j} = 1$. (This is analogous to the familiar notion of a ``chain'' of simple closed curves on a surface.)
\end{defn}

\begin{defn}[Internal edge]\label{defn:internalEdge}
For a finite geodesic segment $\mu\subset \hyp$, an \emph{internal edge of $\proj(\mu)$} is simply an edge $e\in \edges$ whose removal separates $\proj(\mu)$ into two edge paths of length at least $\len(\gamma)+1$. In the case that $\len(\gamma)\leq 2$, we only require these pieces to have length at least $\len(\gamma)$. In either case, $\proj(\mu)$ contains internal edges provided that $\len(\mu)\geq 3\len(\gamma)$.
\end{defn}

\begin{lem}[Long grid paths]\label{lem:lazyChain}
Let $\mu = \mu_1\dotsb\mu_n$ be a grid path with endpoints $x,y\in \hyp$, and let $l_i\in \liftgamma$ be the geodesic containing $\mu_i$. Suppose that $\len(\mu_i) \geq 3\len(\gamma)$ for each $1<i<n$. Then the $\tree$-geodesic from $\proj(x)$ to $\proj(y)$ contains every internal edge of each projection $\proj(\mu_i)$. Furthermore, $(l_1,\dotsc,l_n)$ is a $\liftgamma$-chain.
\end{lem}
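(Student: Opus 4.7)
The plan relies on Lemma~\ref{lem:boundOnDivergence}, which bounds the tree-overlap of any two distinct lifts: $\proj(l) \cap \proj(l')$ is an edge path of length at most $\len(\gamma) + 1$ (at most $\len(\gamma)$ when $\len(\gamma) \leq 2$). I will use this to control the cancellations that occur when the concatenation $\proj(\mu_1) \cdots \proj(\mu_n)$ is reduced to the $\tree$-geodesic $\sigma$ from $\proj(x)$ to $\proj(y)$. The structural observation is that any cancellation of opposite edge pairs at a junction vertex $v_i \colonequals \proj(l_i \cap l_{i+1})$ must occur within the overlap $\proj(l_i) \cap \proj(l_{i+1})$, since these are the only edges simultaneously traversable by the end of $\proj(\mu_i) \subseteq \proj(l_i)$ and the start of $\proj(\mu_{i+1}) \subseteq \proj(l_{i+1})$ near $v_i$.

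For the first conclusion, Lemma~\ref{lem:boundOnDivergence} implies that the overlap at each junction $v_i$ contains at most $\len(\gamma)+1$ edges and so extends at most $\len(\gamma)+1$ edges from $v_i$ along either $\proj(l_i)$ or $\proj(l_{i+1})$. Thus the cancellations bordering $\proj(\mu_k)$ eliminate at most $\len(\gamma)+1$ edges at each end of $\proj(\mu_k)$. By definition, the internal edges of $\proj(\mu_k)$ are precisely those lying strictly beyond these $(\len(\gamma)+1)$-edge neighborhoods of both endpoints, so they lie outside every cancellation region and survive in the reduced path $\sigma$.

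For the chain property, I argue by contradiction using the minimality of $j-i$. If $(l_1,\dotsc,l_n)$ is not a $\liftgamma$-chain, choose minimal $i<j$ with $j-i\geq 2$ and $l_i\cap l_j\neq\emptyset$ (the case $l_i=l_j$ is included, where the tree-projections coincide). Pick internal vertices $u$ of $\proj(\mu_i)$ and $w$ of $\proj(\mu_j)$; by the first conclusion these are vertices of $\sigma$, so the sub-arc $\sigma_{uw}\subseteq\sigma$ is the unique $\tree$-geodesic from $u$ to $w$. Since $u\in\proj(l_i)$, $w\in\proj(l_j)$, and the two tree-lines share a vertex, the concatenated route from $u$ along $\proj(l_i)$ to the shared vertex and then along $\proj(l_j)$ to $w$ reduces (after cancelling backtracking in $\proj(l_i)\cap\proj(l_j)$) uniquely to $\sigma_{uw}$, forcing $\sigma_{uw}\subseteq \proj(l_i)\cup\proj(l_j)$. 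However, by the first conclusion $\sigma_{uw}$ must also contain the internal edges of each intermediate $\proj(\mu_k)$, and in particular of $\proj(\mu_{i+1})$; these lie on $\proj(l_{i+1})$ and would have to sit in $(\proj(l_{i+1})\cap\proj(l_i))\cup(\proj(l_{i+1})\cap\proj(l_j))$. By Lemma~\ref{lem:boundOnDivergence} combined with the minimality of $j-i$ (which forbids hyperbolic intersections among intermediate lifts and localizes the resulting tree overlaps near junction vertices $v_i$ and $v_{i+1}$), the same distance estimate as in the first conclusion shows that internal edges of $\proj(\mu_{i+1})$ are too far from these junctions to lie in either overlap, a contradiction.

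The principal obstacle is the non-adjacent intersection case of the chain property, since a priori the overlap $\proj(l_{i+1})\cap\proj(l_j)$ need not contain any junction vertex. Carefully invoking the minimality of $j-i$ to rule out hidden shortcuts among intermediate lifts, together with Lemma~\ref{lem:boundOnDivergence}, is what allows one to localize this overlap. The first conclusion, by contrast, is a direct accounting argument based on the uniform overlap bound.
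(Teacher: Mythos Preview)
Your treatment of the first conclusion is essentially the paper's argument, though you leave implicit a point the paper makes explicit: one must rule out \emph{cascading} cancellations, i.e.\ edges of $\proj(\mu_{k-1})$ cancelling with edges of $\proj(\mu_{k+1})$ after $\proj(\mu_k)$ is entirely consumed. This cannot happen precisely because each interior segment $\mu_k$ (with $1<k<n$) is long enough to retain an internal edge, which then blocks any interaction between its two neighbors.

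The chain argument has a genuine gap. Your minimality hypothesis is on \emph{hyperbolic} intersection: for proper sub-pairs $(i',j')$ you conclude only that $l_{i'}\cap l_{j'}=\emptyset$ in $\hyp$. This says nothing about where the tree overlap $\proj(l_{i+1})\cap\proj(l_j)$ lies along $\proj(l_{i+1})$; two disjoint geodesics in $\hyp$ can still have tree projections overlapping far from any junction vertex. So your claim that this overlap is ``localized near $v_i$ and $v_{i+1}$'' is unjustified, and without it the contradiction for $j>i+2$ does not follow. The paper avoids this by proving the stronger assertion that $\proj(l_i)\cap\proj(l_j)=\emptyset$ in $\tree$ whenever $|i-j|\geq 2$, arguing by induction on $|i-j|$; the inductive hypothesis then yields $\proj(l_{i+1})\cap\proj(l_j)=\emptyset$ outright. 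You could repair your argument by taking the minimal pair with respect to \emph{tree} overlap rather than hyperbolic intersection, which amounts to the same induction. A secondary issue: you pick internal vertices of $\proj(\mu_i)$ and $\proj(\mu_j)$, but the length hypothesis only applies for $1<k<n$, so these need not exist when $i=1$ or $j=n$; the paper sidesteps this by passing to the shortened grid path $\mu_{i+1}\dotsb\mu_{j-1}$ and using internal edges of its first and last segments.
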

\begin{proof}
The condition $\len(\mu_i)\geq 3\len(\gamma)$ implies that $\proj(\mu_i)$ contains an internal edge  when $1 < i < n$. The first and last segments $\proj(\mu_1)$ and $\proj(\mu_n)$ need not have internal edges, but the claim applies if such edges do exist. Let $M$ denote the optimal bound guaranteed by Lemma~\ref{lem:boundOnDivergence}, so $M$ is either $\len(\gamma)+1$ or $\len(\gamma)$ depending on whether or not $\len(\gamma) > 2$. The edge path $\proj(\mu)$ connects $\proj(x)$ to $\proj(y)$ and would be a $\tree$-geodesic except for the fact that backtracking may occur when the geodesic segments $\proj(\mu_i)$ are concatenated. Upon removing all such backtracking by successively cancelling edge pairs $(\dotsc,e,\bar{e},\dotsc)$, we will obtain the desired $\tree$-geodesic. Every edge that is removed because of backtracking at the junction of $\mu_i$ with $\mu_{i+1}$ must be contained in both $\proj(\mu_i)$ and $\proj(\mu_{i+1})$. In light of Lemma~\ref{lem:boundOnDivergence}, it follows that each junction $\proj(\mu_i)\proj(\mu_{i+1})$ can result in at most $M$ cancellations. Since internal edges are, by definition, separated from these junctions by at least $M$ edges on either side, this shows that internal edges cannot cancel with edges from neighboring segments.

If $\proj(\mu_i)$ contains an edge that does not cancel with an edge from either neighboring segment, then it is impossible for edges from $\proj(\mu_{i-1})$ and $\proj(\mu_{i+1})$ to cancel with each other. Thus it is essential that \emph{all} of the segments $\proj(\mu_i)$, $1<i<n$, are long enough to contain internal edges, as this prevents cascading effects and ensures that cancellations only occur between neighboring segments. The first claim now follows from the above observation that such cancellations do not involve internal edges.

As for the second claim, it suffices to show that $\proj(l_i)$ and $\proj(l_j)$ are disjoint whenever $\abs{i-j}\geq2$. If this is not the case, there is a vertex $v\in \proj(l_i)\cap\proj(l_j)$ for some $i+1 \leq j-1$. Consider the shortened grid path $\mu_{i+1}\dotsb\mu_{j-1}$ from $x'$ to $y'$. Let $a$ and $b$ be internal edges of $\proj(\mu_{i+1})$ and $\proj(\mu_{j-1})$, respectively; such internal edges exist because $1 < i+1\leq j-1 < n$. Since $a$ is separated from $\proj(x')$ by at least $M$ edges, the fact that $\len(\proj(l_i)\cap \proj(\mu_{i+1}))\leq M$ implies $a\notin\proj(l_i)$. We similarly have $b\notin\proj(l_j)$. By inducting on $\abs{i-j}$, we may furthermore assume that $a\notin\proj(l_j)$ and $b\notin\proj(l_i)$; this is possible because the base case $\abs{i-j} = 2$ allows one to choose $a=b$. We now see that $\proj(l_i)\cup\proj(l_j)$ contains an edge path from $\proj(x')$ through $v$ to $\proj(y')$ that avoids both $a$ and $b$. Applying the second assertion to the grid path $\mu_{i+1}\dotsb\mu_{j-1}$ yields a contradiction.
\end{proof}

\subsection{The tools for weaving}\label{sec:pushing}

We return to the task of finding dynamic marked points that ``push'' $\liftimage{t}$, where $\liftimage{t} = \tilde{F}_t(\baselift)$ is the image of our initial lift $\baselift$ at time $t$. In this subsection we formalize this notion in terms of ``constraining points'' (Definition~\ref{defn:constrains}) and provide the necessary tools for working with these points. The sought-after points will be described explicitly in the next subsection.

These considerations involve infinite paths in $\hyp$ and their images under isotopies of $\hyp$. We are primarily concerned with lifts of isotopies of the surface $S$; any such isotopy moves points a uniformly bounded distance and, in particular, fixes the boundary at infinity $\partial\hyp$ pointwise. Therefore, we will only consider isotopies of $\hyp$ that fix $\partial\hyp$ pointwise: if $\mu\colon\R\to\hyp$ is path with two endpoints at infinity, then these endpoints remain fixed throughout all isotopies.

We henceforth assume that the pants decomposition $\pants$ is chosen to contain our simple closed curve $\baseloop\subset S$. In this case, $\baselift$ is a component of $\liftpants$ and corresponds to an edge of $\tree$; this edge $\proj(\baselift)$, together with its adjacent vertices, will be denoted by $\bedge\subset\tree$. 

Let $\Gstab\leq G$ be the stabilizer of $\bedge$ in $G$. This is a cyclic subgroup consisting of hyperbolic isometries that act by translation along the geodesic axis $\baselift\subset \hyp$. If $\mu\subset S$ is any simple closed curve isotopic to $\varphi_\gamma^k(\baseloop)=\pi(\liftimage{k})$ in the marked surface $(S,p)$, then $\mu$ has a particular lift $\tilde{\mu}\subset\hyp$ which is isotopic to $\liftimage{k}$ in $(\hyp,\pi\inv(p))$. This lift $\tilde{\mu}$ is characterized by having the same endpoints in $\partial \hyp$ as $\liftimage{k}$. Since the endpoints of $\liftimage{k}$ are the same as those of $\baselift$, we see that $\Gstab$ fixes the endpoints of $\tilde{\mu}$. Thus each $g\in \Gstab$ in fact preserves $\tilde{\mu}$ and acts as a translation along $\tilde{\mu}$ of the form $y\mapsto y\cdot \mu^n$ for some $n\in \Z$. Indeed, since $\mu$ and $\baseloop$ are isotopic in $S$, they determine the same conjugacy class in $\pi_1(S,p)$. Elements of this conjugacy class are in bijective correspondence with the lifts of $\baseloop$ and also with the lifts of $\mu$. The two lifts $\baselift$ and $\tilde{\mu}$ have the same endpoints at infinity and therefore correspond to the same element of $\pi_1(S,p)$; this element is a generator of $\Gstab$. 

The cyclic group $\Gstab$ acts on $\hyp$ on the left with quotient space $\Gstab\backslash \hyp$. In this quotient, the lift $\tilde{\mu}$ projects to a simple closed curve $\Gstab\backslash\tilde{\mu}$ that bijectively covers $\mu$; that is, the natural covering $\Gstab\backslash\hyp\to S$ restricts to a degree one cover $\Gstab\backslash\tilde{\mu}\to \mu$. Therefore, each intersection point $y\in \mu\cap \pants$ with a pants curve $\seam$ lifts to a \emph{unique} intersection point of the loop $\Gstab\backslash\tilde{\mu}$ with a lift of $\seam$ to $\Gstab\backslash \hyp$. Furthermore each such lift exactly corresponds to a $\Gstab$--orbit $\Gstab\tilde{\seam}\subset\hyp$ of geodesics in $\liftpants$. This has the following implication:

\begin{obs}\label{obs:countingIntersections}
Let $\mu\subset (S,p)$ be a simple closed curve that is isotopic to $\varphi_\gamma^k(\baseloop)$ in $(S,p)$, and let $\tilde{\mu}\subset(\hyp,\pi\inv(p))$ be the unique lift whose endpoints agree with those of $\liftimage{k}$. Assuming $\mu$ is transverse to $\pants$, the cardinality of $\mu\cap \pants$ is equal to the number of edges that the loop $\Gstab\backslash\proj(\tilde{\mu})$ crosses in the quotient graph $\Gstab\backslash \tree$. That is, $\card{\mu\cap \pants}$ is equal to the number of $\Gstab$--orbits of edges that $\proj(\tilde{\mu})$ crosses in $\tree$. Therefore, in order to estimate the intersection number
\[\intnum(\varphi_\gamma^k(\baseloop),\pants) \colonequals \sum_i\intnum(\varphi_\gamma^k(\baseloop),\seam) = \inf_{\mu}\card{\mu\cap \pants},\]
it suffices to vary $\mu$ in the isotopy class of $\varphi_\gamma^k(\baseloop)\subset (S,p)$ and bound the number of $\Gstab$--orbits of edges $e\in\edges$ that $\proj(\tilde{\mu})$ crosses in $\tree$.
\end{obs}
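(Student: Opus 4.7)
The plan is to unpack the two equalities directly from the covering-space setup established just before the observation, which already supplies the key ingredient: the cover $\Gstab\backslash\hyp\to S$ restricts to a degree-one covering $\Gstab\backslash\tilde\mu\to\mu$, hence a homeomorphism. I would take this as a black box (it rests on $\mu$ and $\baseloop$ representing the same conjugacy class in $\pi_1(S,p)$, and $\tilde\mu$ being the unique lift with endpoints at $\partial\hyp$ matching $\baselift$, so that $\Gstab$ is the full stabilizer of $\tilde\mu$).

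Given transversality of $\mu$ with $\pants$, each point $y\in\mu\cap\pants$ lifts uniquely under this homeomorphism to a transverse intersection of the simple loop $\Gstab\backslash\tilde\mu$ with a lift of some pants curve in $\Gstab\backslash\hyp$. The components of $\liftpants$ are in bijection with the edges of $\tree$, so their $\Gstab$-quotients correspond bijectively to edges of the quotient graph $\Gstab\backslash\tree$. Thus each lifted intersection point records a transverse edge-crossing of $\Gstab\backslash\proj(\tilde\mu)$, and transversality upstairs prevents these from being tangencies or backtracks. Conversely, every edge-crossing of $\Gstab\backslash\proj(\tilde\mu)$ arises from a unique point of $\mu\cap\pants$, yielding the first equality
\[\card{\mu\cap\pants}=\#\{\text{edge-crossings of }\Gstab\backslash\proj(\tilde\mu)\text{ in }\Gstab\backslash\tree\}.\]

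For the second equality, edges of $\Gstab\backslash\tree$ are by construction $\Gstab$-orbits of edges in $\tree$. Since $\Gstab$ stabilizes $\proj(\tilde\mu)$ setwise (acting on $\tilde\mu$ by translations), any two edges of $\tree$ in a common $\Gstab$-orbit are traversed by $\proj(\tilde\mu)$ the same number of times, so summing crossings over $\Gstab$-orbits in $\tree$ matches counting crossings in the quotient graph. The final statement about $\intnum$ is then immediate from the definition $\intnum(\alpha,\beta)=\inf\card{a\cap b}$, with $\pants$ held fixed and $\mu$ ranging over the isotopy class of $\varphi_\gamma^k(\baseloop)$ in $(S,p)$.

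The main delicate point — and the only step where anything could go wrong — is the passage from ``transverse intersection of $\Gstab\backslash\tilde\mu$ with a lift of $\seam$ in $\Gstab\backslash\hyp$'' to ``edge-crossing (not tangency) of the projected loop in $\Gstab\backslash\tree$''; this is really the content of the tree projection $\proj$ sending geodesics transverse to $\liftpants$ onto honest edge paths, and it is guaranteed by choosing $\mu$ transverse to $\pants$ and by the fact that $\tree$ was defined to be dual to $\liftpants$. Beyond this, no substantive theoretical obstacle arises; the statement is bookkeeping once the intermediate cover $\Gstab\backslash\hyp$ and the projection $\proj$ to $\tree$ are in place.
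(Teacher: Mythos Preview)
Your proposal is correct and follows essentially the same route as the paper: the observation is presented there as an immediate consequence of the preceding paragraph, which establishes that $\Gstab\backslash\tilde\mu\to\mu$ is a degree-one cover and that lifts of pants curves to $\Gstab\backslash\hyp$ correspond to $\Gstab$-orbits of components of $\liftpants$, i.e., to edges of $\Gstab\backslash\tree$. You have simply unpacked these steps more explicitly (and rightly flagged the transversality point ensuring genuine edge-crossings), but there is no substantive difference in approach.
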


It is now apparent that we should consider paths that are isotopic to $\liftimage{k}$ and study their projections to $\tree$. Recall the set $\marked$ of dynamic marked points defined in Definition~\ref{defn:markedPoint}. For a given $t\in \R$ and a subset $V\subseteq \marked$, let $\liftclass{t}{V}$ denote the isotopy class of the path $\liftimage{t}$ in $(\hyp,V_{t})$, where $V_t = \{\rho(t) \mid \rho\in V\}$ is the set of locations of those dynamic marked points in $V$ at time $t$. Equivalently, $\liftclass{t}{V}$ is the isotopy class of $\liftimage{t}$ in $\hyp\setminus V_t$. This isotopy class is obtained from $\liftimage{t}\subset(\hyp,\marked_t)$ by simply ``forgetting,'' at time $t$, all of the dynamic marked that are not in $V$. One may alternately think of forgetting these points at time $0$ and pushing the initial path $\baselift$ by a modified isotopy that only moves those dynamic marked points contained in $V$. The resulting path is a representative of $\liftclass{t}{V}$.

Recall that the vertices and edges of $\tree$ are defined to be subsets of $\hyp$; in particular, it makes sense to say that a path $\mu\subset \hyp$ intersects a vertex $v\in \vertices$. More generally, any subset $A\subset \tree$ may be thought of as a subset of $\hyp$ by looking at the preimage $\proj\inv(A)\subset \hyp$. This identification will be used implicitly in the sequel. 

For a subset $X\subseteq\hyp$ (or $X\subseteq \tree$), we say that $\liftclass{t}{V}$ \emph{intersects $X$} if every path in the isotopy class intersects $X$. Otherwise, there is a representative path that avoids $X$ and we say that $\liftclass{t}{V}$ is \emph{disjoint from $X$}. Since every path that is isotopic to $\liftimage{t}$ in $(\hyp, \marked_t)$ lies in the isotopy class $\liftclass{t}{V}$, we see that if $\liftclass{t}{V}$ intersects $X$, then so does $\liftclass{t}{\marked}$. In particular, we may gain information about $\liftclass{t}{\marked}$ by considering the drastically simplified isotopy classes $\liftclass{t}{V}$ corresponding to certain finite subsets $V\subset \marked$.

As the endpoints of $\baselift$ in $\partial\hyp$ remain fixed throughout all isotopies, we find that $\liftclass{t}{V}$ intersects the base edge $\bedge\subset\tree$ for all times $t$ and all subsets $V\subseteq \marked$ (recall that $\bedge$ contains the edge $\proj(\baselift)$ and its adjacent vertices). Our goal is to show that, as time progresses, $\liftclass{t}{V}$ intersects larger and larger subsets of $\tree$.

\begin{defn}[Constraining points]\label{defn:constrains}
Let $V\subseteq\marked$ be a set of marked points containing a marked point $\rho$, and let $t\geq 0$ be a real number. We say that $\rho$ \emph{constrains} $\liftclass{t}{V}$ if $\liftclass{s}{V}$ intersects $\proj(\rho(s))\in \tree$ for all times $s\geq t$. In this case, every path in $\liftclass{s}{V}$ projects onto (a superset of) the unique $\tree$-geodesic connecting $\proj(\rho(s))$ to $\bedge$. See Figure~\ref{fig:constraint} for an illustration.
\end{defn}

The first thing to check is that such points exist. Recall that $\baselift$ was chosen specifically so that it intersects the path $\lpath{\basepoint}{\gamma}$ starting at the basepoint $\basepoint\in \hyp$; it follows that $\lpath{h\basepoint}{\gamma} = h(\lpath{\basepoint}{\gamma})$ intersects $\baselift =h(\baselift)$ for every $h\in \Gstab$.

\begin{figure}
\begin{minipage}[t]{0.48\linewidth}
\centering
\labellist
\small\hair 2pt
\pinlabel {$\liftimage{t}$} [b] <0pt,1pt> at 107 45
\pinlabel {$\proj(x_0(t))$} [t] <-1pt,-0.5pt> at 185 29
\pinlabel {$x_0$} [r] <0.5pt,1pt> at 135 83
\pinlabel {$x_1$} [b] <0pt,0pt> at 105 81
\pinlabel {$x_2$} [r] at 70 97
\pinlabel {$x_3$} [l] at 59 152
\pinlabel {$x_4$} [r] at 104 150
\pinlabel {$x_5$} [tr] <1pt,0pt> at 157 152
\pinlabel {$x_6$} [l] <0pt,-1pt> at 134 120
\endlabellist
\includegraphics[scale=1.0]{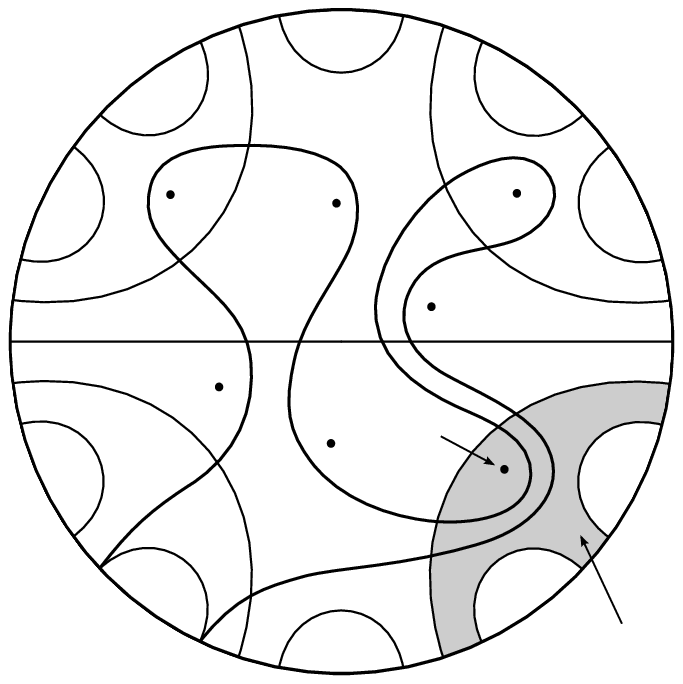}
\caption{The marked point $x_0$ constrains $\liftclass{t}{\{x_0,\dotsc,x_6\}}$ because $\liftimage{s}$ is isotopically forced to intersect the vertex $\proj(x_0(s))\in\tree$ for all $s\geq t$.}
\label{fig:constraint}
\end{minipage}
\hfill
\begin{minipage}[t]{0.48\linewidth}
\centering
\labellist\small\hair 2pt
\pinlabel {$\rho(s)$} [r] <1pt,0pt> at 70 159
\pinlabel {$z$} [l] <1pt,1pt> at 165 198
\pinlabel {$\rho(0)$} [r] <1pt,0pt> at 81 66
\pinlabel {$\proj(\rho(s))$} [t] at 172 126
\pinlabel {$B_0$} [c] at 90 38
\pinlabel {$\liftimage{0}$} [bl] <0pt,-1pt> at 94 73
\pinlabel {$\liftimage{s}$} [l] <1pt,0pt> at 124 76
\pinlabel {$R_0\bigtriangleup R_s$} [c] <1pt,-2pt> at 97 109
\pinlabel {$R_s$} [c] <1pt,-2pt> at 56 134
\endlabellist
\includegraphics[scale=1.0]{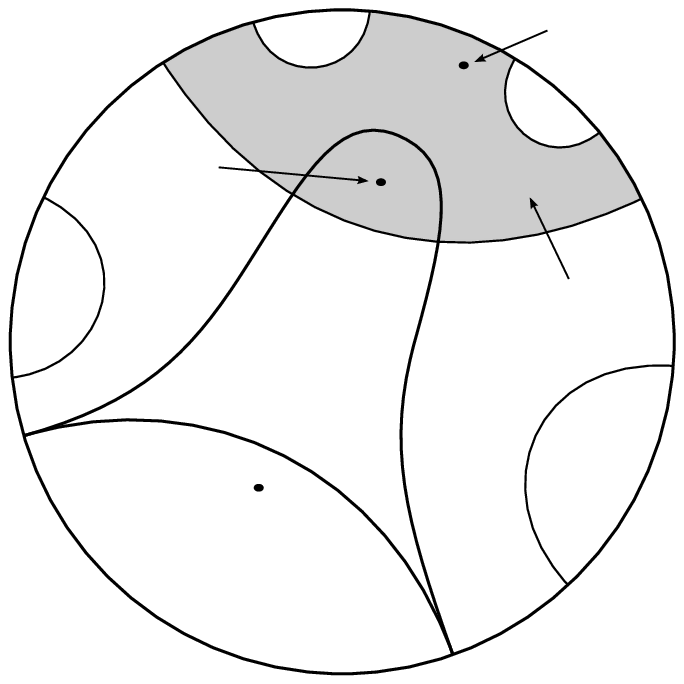}
\caption{Proving Proposition~\ref{prop:initialPushPoints}: The marked point $\rho$ necessarily constrains $\liftclass{s}{\{\rho\}}$.}
\label{fig:initialConstraints}
\end{minipage}
\end{figure}

\begin{prop}[Initial constraints] \label{prop:initialPushPoints}
For a fixed $h\in \Gstab$, let $\rho\in \marked$ be the dynamic marked point whose position at time $t$ is given by $\rho(t) = \tilde{F}_t(h \basepoint)$. Then $\rho$ constrains $\liftclass{1}{\{\rho\}}$.
\end{prop}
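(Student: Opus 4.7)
The plan is to identify a disk-like component of the symmetric difference of the sides of $\baselift$ and $\liftimage{s}$ containing $\rho(s)$, then argue that any representative of $\liftclass{s}{\{\rho\}}$ avoiding the vertex $v_s \colonequals \proj(\rho(s))$ produces a topological contradiction with this disk structure.

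\emph{Setup.} Let $R_0 \subset \hyp$ be the component of $\hyp \setminus \baselift$ containing $\rho(0) = h\basepoint$, and set $R_s \colonequals \tilde{F}_s(R_0)$, which is the component of $\hyp \setminus \liftimage{s}$ containing $\rho(s)$.  Because $\baselift$ was chosen to meet $\lpath{\basepoint}{\gamma}$ and $h \in \Gstab$ preserves $\baselift$, the lifted geodesic $\lpath{h\basepoint}{\gamma}$---which coincides with the trajectory $\rho|_{[0,1]}$---meets $\baselift$, and in fact exactly once since two distinct geodesics in $\hyp$ intersect at most once.  So there is a unique time $t^{*} \in (0,1)$ at which $\rho$ transitions between the two sides of $\baselift$, and by continuity of the isotopy $\tilde{F}_t$ the symmetric difference $R_0 \bigtriangleup R_t$ acquires (starting at $t^{*}$) a connected component $B^{*}$ that contains $\rho(t)$ and persists for all $t \geq 1$.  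A planar separation argument shows that the topological boundary $\partial B^{*}$ is a Jordan curve composed of one subarc of $\baselift$ joined to one subarc of $\liftimage{t}$ that wraps around $\rho(t)$ inside $v_s$.

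\emph{Conclusion.} Suppose toward contradiction that some representative $\eta \in \liftclass{s}{\{\rho\}}$ satisfies $\eta \cap v_s = \emptyset$.  Then $\eta$ avoids $\rho(s)$ and is isotopic to $\liftimage{s}$ in $\hyp \setminus \{\rho(s)\}$, so $\eta \cup \liftimage{s}$ cobounds a union of planar regions in $\hyp$ none of which contains $\rho(s)$.  But the wrapping subarc of $\liftimage{s}$ that forms half of $\partial B^{*}$ lies entirely inside $v_s$ and encircles $\rho(s)$; to cobound a $\rho(s)$-free region with this subarc, $\eta$ would have to partner it on the $v_s$-side of the wrapping arc, which is impossible given $\eta \cap v_s = \emptyset$.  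The main technical obstacle is the rigorous verification that $B^{*}$ is indeed a Jordan disk with the described boundary, for which I expect a continuity/persistence argument tracking $R_0 \bigtriangleup R_t$ from $t = 0$ through $t = s$, applying planar separation centered on the single transverse crossing at $t^{*}$.
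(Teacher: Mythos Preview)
Your proposal has a genuine gap beyond the Jordan-disk obstacle you flag at the end. The claim that ``the wrapping subarc of $\liftimage{s}$ that forms half of $\partial B^{*}$ lies entirely inside $v_s$'' is unsupported and generally false: nothing about the isotopy $\tilde{F}_t$ forces the $\liftimage{s}$-portion of $\partial B^{*}$ to stay within any particular vertex of $\tree$. The region $B^{*}$ may be large and its $\liftimage{s}$-boundary may wander through many vertices before returning. Without this containment, your contradiction in the final paragraph does not go through. The cobounding statement for $\eta \cup \liftimage{s}$ is also imprecise: an isotopy in $\hyp \setminus \{\rho(s)\}$ does not directly decompose $\eta \cup \liftimage{s}$ into Jordan curves bounding $\rho(s)$-free disks, so the ``partnering on the $v_s$-side'' step lacks a clear meaning.

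The paper's proof sidesteps both the Jordan-disk analysis and the transfer to $\eta$ with two simple moves. First, since only the single marked point $\rho$ is relevant, one may modify $\tilde{F}_t$ freely subject to $\tilde{F}_t(h\basepoint) = \rho(t)$; thus $\liftimage{s}$ may be taken to be an \emph{arbitrary} representative of $\liftclass{s}{\{\rho\}}$ from the outset, eliminating the separate argument for $\eta$. Second, rather than analyzing the shape of the symmetric difference, the paper uses only that it is \emph{bounded} (since $\baselift$ and $\liftimage{s}$ share endpoints in $\partial\hyp$), whereas the vertex $\proj(\rho(s))$ is unbounded. Because $\baselift \in \liftpants$ is itself an edge of $\tree$, the entire vertex $\proj(\rho(s))$ lies on the side of $\baselift$ opposite $\rho(0)$; boundedness of the symmetric difference then produces a point $z \in \proj(\rho(s))$ on the side of $\liftimage{s}$ opposite $\rho(s)$. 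Path-connectedness of $\proj(\rho(s))$ forces $\liftimage{s}$ to cross it---no fine structure of $R_0 \bigtriangleup R_s$ is needed.
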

\begin{proof}
Fix a time $s\geq 1$. After forgetting about all other dynamic marked points and adjusting the pushing isotopy $\tilde{F}_t$ accordingly, we may assume that $\liftimage{s}$ represents an arbitrary path in the isotopy class $\liftclass{s}{\{\rho\}}$. By the Jordan Curve Theorem, each path $\liftimage{t}=\tilde{F}_t(\baselift)$ divides $\hyp$ into two path connected components, which we denote by $R_t$ and $B_t$ for the ``red'' and ``blue'' sides, respectively. These names are assigned consistently in $t$ so that they are preserved by the pushing isotopy, that is, $\tilde{F}_t(B_0) = B_t$. Assuming that $\rho$ initially lies in the blue side, we have that $\rho(t)\in B_t$ for all $t$. Since the path $\rho([0,1]) = \lpath{h\basepoint}{\gamma}$ intersects $\baselift$, the marked point $\rho$ evidently crosses over $\baselift$ during the time interval $[0,1]$. Recalling that $\proj(\rho(s))\in\tree$ is a subset of $\hyp$, we have that $\rho(s)\in R_0$ and $\proj(\rho(s))\subseteq R_0$ at time $s$; see Figure~\ref{fig:initialConstraints}.

The paths $\baselift$ and $\liftimage{s}$ have the same endpoints in $\partial\hyp$ and are therefore contained within bounded neighborhoods of each other. Thus the symmetric difference of $R_0$ and $R_s$ is contained in a bounded neighborhood of $\baselift$. On the other hand, $\proj(\rho(s))\subset R_0$ contains points that are arbitrarily far from $\baselift$. Therefore, by avoiding the symmetric difference, it is possible to choose a point $z\in \proj(\rho(s))\cap R_s$. The fact that $\rho(s)\in B_s$ and $z\in R_s$ lie in opposite components of $\hyp\setminus\liftimage{s}$ implies that every path from $\rho(s)\in\proj(\rho(s))$ to $z\in\proj(\rho(s))$ must intersect $\liftimage{s}$. 
Since $\proj(\rho(s))$ is path connected, this shows that $\liftimage{s}$ intersects $\proj(\rho(s))$.
\end{proof}

Once there are some constraints on $\liftimage{t}$, the weaving pattern of the dynamic marked points creates more in a recursive manner. The relevant interaction occurs when a marked point $x_1$ \emph{passes in front} of another marked point $x_2$, meaning that the intersection $x_1(\R)\cap x_2(\R) = \{y\}$ is a single point and that $x_1$ reaches $y$ before $x_2$ does. The basic intuition is this: if $x_1$ constrains $\liftimage{t}$ while it passes in front of $x_2$, then it drags $\liftimage{t}$ across the path in front of $x_2$. Since $\liftimage{t}$ is now blocking its way, $x_2$ is forced to push $\liftimage{t}$ ahead as it progresses through $\hyp$.

To make this recursive step precise, we formulate it in the context of $\tree$. Suppose that $x_1$ passes in front of $x_2$, and let $X = \proj(x_1(\R))\cap \proj(x_2(\R))$ be the intersection of their $\tree$-geodesics; according to Lemma~\ref{lem:boundOnDivergence}, $X$ contains at most $\len(\gamma)+1$ edges. For a time $s$, consider the two rays $x_j(\R_{\leq s})$; we think of these rays as \emph{tails} connecting the marked points $x_j$ to $\partial\hyp$. We say that $x_1$ and $x_2$ have \emph{diverged in $\tree$ at time $s$} if $X$ separates each ray $\proj(x_j(\R_{\leq s}))$ into two connected components, neither of which is a single vertex. Since a dynamic marked point crosses $\len(\gamma)$ edges of $\tree$ per unit time, we see that each time $s\geq t_2+2$ satisfies this criterion, where $t_2\in\R$ is the time at which $x_2$ reaches $\{y\} = x_1(\R)\cap x_2(\R)$. 

Assuming that $x_1$ and $x_2$ have diverged in $\tree$ at time $s$, choose any two edges $e,e'\in \edges$ in different components of $\proj(x_1(\R))\setminus X$ and consider their relationship to the tail $\eta = x_2(\R_{\leq s})$ of $x_2$. Thinking of $\eta$ as a wall or a barrier, it is apparent that the only way to get from $e$ to $e'$ is to ``go around'' the marked point $x_2(s)$ at the end of $\eta$. More precisely, any path $\mu\subset\hyp\setminus \eta$ that intersects both $e$ and $e'$ must also intersect $\proj(x_2(s))$; see Figure~\ref{fig:separationIdea}. The purpose of the next lemma is to prove that $\eta$ enjoys this same separation property on the level of isotopy classes of paths.

\begin{figure}
\begin{minipage}[b]{0.48\linewidth}
\centering
\labellist
\small\hair 2pt
\pinlabel {$\eta$} [br] <0pt,1pt> at 82 97
\pinlabel {$\mu$} [bl] at 166 130
\pinlabel {$e'$} [bl] at 53 155
\pinlabel {$e$} [br] <0pt,-1pt> at 46 72
\pinlabel {$x_2(s)$} [r] <1pt,1pt> at 160 111 
\pinlabel {$\proj(x_2(s))$} [t] <-9pt,-1pt> at 166 59
\endlabellist
\includegraphics[scale=1.0]{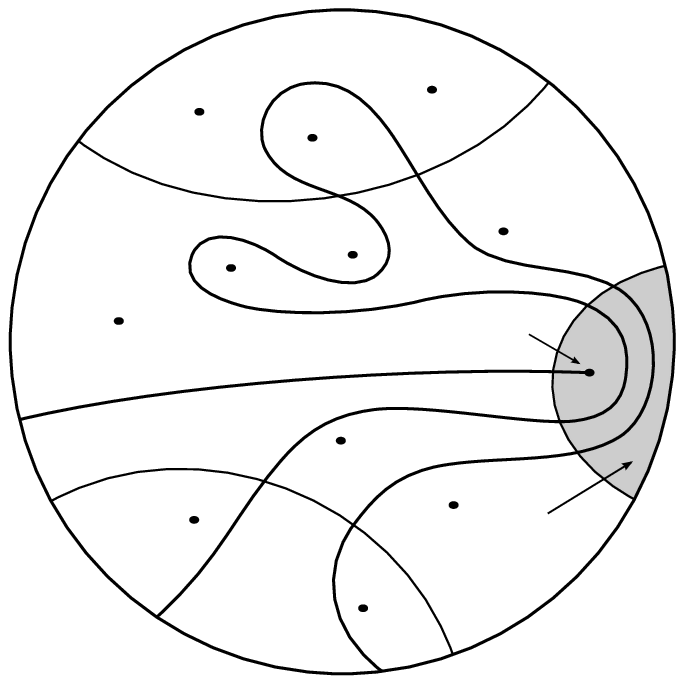}
\caption{$\eta$ separates $e$ from $e'$: any path that intersects $e$ and $e'$ must go through $\proj(x_2(s))$.}
\label{fig:separationIdea}
\end{minipage}
\hfill
\begin{minipage}[b]{0.48\linewidth}
\centering
\labellist\small\hair 2pt
\pinlabel {$\tilde{c}$} [tr] at 166 62
\pinlabel {$\eta$} [b] at 140 100
\pinlabel {$\eta'$} [t] <2pt,2pt> at 198 59
\pinlabel {$\proj(\rho(s))$} [b] at 32 188
\pinlabel {$\mu_0$} [tl] at 154 161
\pinlabel {$e$} [bl] at 128 39
\pinlabel {$\proj(x(s))$} [b] <0pt,1pt> at 180 189
\pinlabel {$W$} [c] at 69 37
\endlabellist
\includegraphics[scale=1.0]{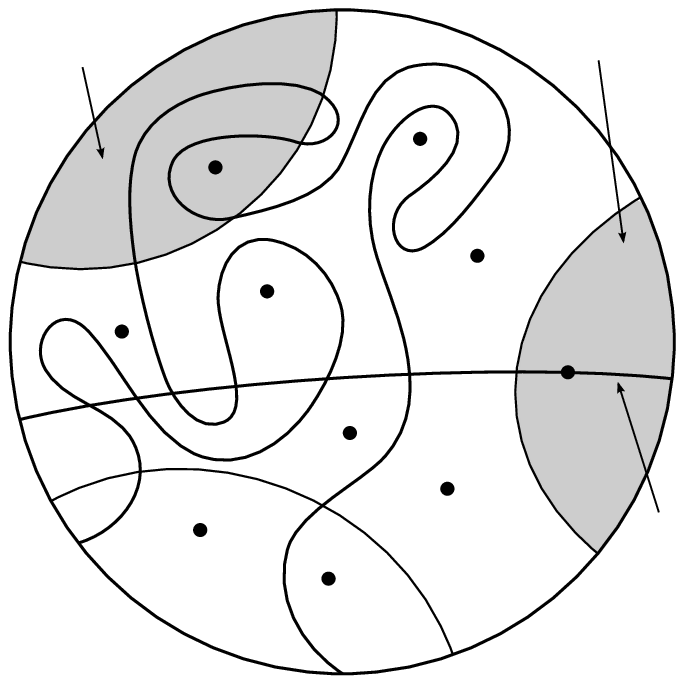}
\caption{The representative arc $\mu_0\in\liftclass{t}{V}$ is isotopically disjoint from $\eta\cup\eta'$ in $W$.}
\label{fig:separationProof}
\end{minipage}
\end{figure}

\begin{lem}[Tails separate]\label{lem:separation}
Suppose that the marked point $\rho\in\marked$ passes in front of another marked point $x\in\marked$ and that they have diverged in $\tree$ by time $s$. Let $\eta = x(\R_{\leq s})$ be the tail of $x$ and let $e\in \edges$ be any edge in the infinite component of $\proj(\rho(\R_{\leq s}))\setminus\proj(x(\R))$. Then $\eta$ separates $\proj(\rho(s))$ from $e$ in the following sense: If $V\subset \marked$ is a finite subset of marked points that contains $\{x,\rho\}$ and the isotopy class $\liftclass{s}{V}$ intersects both $\proj(\rho(s))$ and $e$ but is disjoint from $\eta$, then $\liftclass{s}{V}$ also intersects $\proj(x(s))$.
\end{lem}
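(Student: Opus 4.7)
My plan is to construct a bi-infinite simple arc $A\subset\eta\cup\overline{\proj(x(s))}$ separating $\proj(\rho(s))$ from $e$ in $\hyp$, and then to argue that this separation, combined with the isotopy-class hypotheses, forces every representative of $\liftclass{s}{V}$ to meet $\proj(x(s))$. Concretely, if $\liftclass{s}{V}$ were disjoint from $\proj(x(s))$, I would produce a common representative $\mu$ disjoint from both $\eta$ and $\proj(x(s))$; by general position $\mu$ then avoids $\overline{\proj(x(s))}$ and hence all of $A$, contradicting that $\mu$ must link $\proj(\rho(s))$ to $e$ across $A$.

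The first step is to locate $\proj(\rho(s))$ and $e$ relative to the geodesic line $x(\R)\subset\hyp$. Since $\rho(\R)\neq x(\R)$ are distinct lifts of $\gamma$, they meet in at most one point, and the ``passes in front of'' hypothesis identifies this intersection as $y=\rho(t_\rho)=x(t_x)$ with $t_\rho<t_x$. Thus $\rho((t_\rho,\infty))$ lies on one side of $x(\R)$ (call it $L$) and $\rho((-\infty,t_\rho))$ on the other ($R$). By the divergence hypothesis, $\proj(\rho(s))$ is a vertex of $\tree$ disjoint from $\proj(x(\R))$, so the entire region $\proj(\rho(s))$ lies in $L$; the same analysis shows that the edge $e$, lying in the infinite component of $\proj(\rho(\R_{\leq s}))\setminus\proj(x(\R))$, corresponds to a geodesic contained in $R$. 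Now let $c_0$ be the boundary geodesic of $\proj(x(s))$ through which $\eta$ enters, at time $t_1<s$ (so $x(t_1)\in c_0$). A brief tree-path analysis—using that $v=\proj(x(s))$ lies past $X$ along $\proj(x(\R))$ and hence the tree-geodesics from $v$ to both $\proj(\rho(s))$ and $e$ leave $v$ through the edge $c_0$—shows that $\proj(\rho(s))$ and $e$ both lie inside the open half-plane $H\subset\hyp$ on the side of $c_0$ opposite to $\proj(x(s))$.

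To build the separating arc, choose an arc $\eta'\subset\overline{\proj(x(s))}$ from $x(s)$ to a point $w\in c_0$ with $w\neq x(t_1)$, and let $\tilde c_{\mathrm{ray}}$ denote the ray of $c_0$ from $w$ to $\partial\hyp$ oriented away from $x(t_1)$. The union
\[
A := \eta\cup\eta'\cup\tilde c_{\mathrm{ray}}
\]
is a simple bi-infinite arc in $\hyp$ with both ends on $\partial\hyp$ and is contained in $\eta\cup\overline{\proj(x(s))}$. Its intersection with $H$ is the subray $\eta_{\mathrm{outer}}:=x(\R_{\leq t_1})=x(\R)\cap H$. Since $\proj(\rho(s))\subset L\cap H$ and $e\subset R\cap H$ lie on opposite sides of $x(\R)$, they lie in different components of $H\setminus\eta_{\mathrm{outer}}$, and therefore in different components of $\hyp\setminus A$.

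To finish, assume toward contradiction that $\liftclass{s}{V}$ is also disjoint from $\proj(x(s))$, witnessed by a representative $\mu_1$. By general position $\mu_1\cap\overline{\proj(x(s))}=\emptyset$, and since $\mu_1$ meets $\proj(\rho(s))\subset H$, the connected arc $\mu_1$ must itself lie in the component $H$ of $\hyp\setminus\overline{\proj(x(s))}$. Starting from the hypothesis representative $\mu_0$ disjoint from $\eta$, the fact that $\mu_0$ and $\mu_1$ are isotopic in $(\hyp,V_s)$ and that $\eta_{\mathrm{outer}}$ contains no points of $V_s$ for generic $s$ allows one to remove the crossings $\mu_1\cap\eta_{\mathrm{outer}}$ in cancelling pairs via innermost bigon moves in $H\setminus V_s$—all of which can be performed without entering $\overline{\proj(x(s))}$—to obtain a common representative $\mu$ disjoint from both $\eta$ and $\overline{\proj(x(s))}$. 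Then $\mu\cap A=\emptyset$, yet $\mu$ connects $\proj(\rho(s))$ to $e$ across the components of $\hyp\setminus A$; contradiction. The chief difficulties I anticipate are the tree-path analysis confirming that $\proj(\rho(s))$ and $e$ share the half-plane $H$ (which requires tracking orientations of $\proj(\rho(\R))$ and $\proj(x(\R))$ along their overlap $X$), and the bigon-removal argument that produces the common representative $\mu$ without crossing $\overline{\proj(x(s))}$.
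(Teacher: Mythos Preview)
Your approach is correct in outline and can be completed, but the paper takes a cleaner route that sidesteps both difficulties you flag. Rather than building the composite arc $A$, the paper sets $\eta' = x(\R_{\geq s})$ (the \emph{future} ray of $x$), so that the separating object is simply the full geodesic $\eta\cup\eta' = x(\R)$. Separation of $\proj(\rho(s))$ from $e$ is then immediate---they lie on opposite sides of a single geodesic---and your tree-path analysis reduces to the one-line observation that $\tilde c$ (your $c_0$) separates both of them from $\eta'$. For the ``common representative'' step, the paper compactifies to the punctured disk $W=(\hyp\cup\partial\hyp)\setminus V_s$ and invokes the standard fact that a simple arc isotopically disjoint from each of two disjoint simple arcs can be made simultaneously disjoint from both (via geodesic representatives, or \cite[Expos\'e~3]{FLP79}). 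This replaces your hands-on bigon removal with a black box, and the contradiction then comes from $\eta\cup\eta'=x(\R)$ separating $W$.

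Your bigon argument can be salvaged, but not via the stated reasoning: $s$ is fixed, not generic, and whether $\eta_{\text{outer}}$ itself contains points of $V_s$ is irrelevant. What you actually need is that innermost bigons between $\mu_1$ and $\eta$ lie in $H$ and contain no marked points. The latter is the bigon criterion in $(\hyp,V_s)$ (using $\intnum(\mu_1,\eta)=0$ via $\mu_0$); the former follows because $\mu_1\subset H$ forces $\mu_1\cap\eta\subset\eta_{\text{outer}}$, so the $\eta$-side of any bigon is a subarc of $\eta_{\text{outer}}\subset\overline H$, whence the bigon's boundary---and hence the bigon itself---lies in $\overline H$ (here one uses that the region $\proj(x(s))$ is unbounded, so a bounded disk with boundary in $\overline H$ cannot spill into it). With this in hand your argument concludes, and in fact the arc $A$ becomes unnecessary: it suffices that $\eta_{\text{outer}}$ separates $H$ with $\proj(\rho(s))$ and $e$ on opposite sides, since you have already confined $\mu$ to $H$.
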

\begin{proof}
Assuming that $x(s)$ is contained in a component $U$ of $\hyp\setminus \liftpants$, we let $\tilde{c}\subset\liftpants$ denote the boundary component of $U$ that intersects $\eta$. Thus $\tilde{c}$ defines an edge of $\tree$ that is adjacent to the vertex $\proj(x(s))\in\vertices$. If $x(s)\in \liftpants$ so that $\proj(x(s))$ is an edge of $\tree$, then we take $\tilde{c} = \proj(x(s))$. The fact that $x$ and $\rho$ have diverged in $\tree$ implies that $\tilde{c}$ is disjoint from $\proj(\rho(\R))$. Let $\eta'=x(\R_{\geq s})$ be the ``future ray'' of $x$. Recalling that $\tilde{c}$ is a geodesic component of $\liftpants$, it follows from the Jordan Curve Theorem that $\tilde{c}$ divides $\hyp$ into two components, one containing $\eta'$ and the other containing both $\proj(\rho(s))$ and $e$. 

Suppose now that $\liftclass{s}{V}$ intersects $\proj(\rho(s))$ and $e$ but is disjoint from $\eta$. Our goal is to show that $\liftclass{s}{V}$ also intersects $\proj(x(s))$. Since every path in $\hyp$ that intersects both $\proj(\rho(s))$ and $\eta'$ must intersect $\tilde{c}$ and, consequently, $\proj(x(s))$, it suffices to show that $\liftclass{s}{V}$ intersects $\eta'$. Supposing this is not the case, there is a representative path $\mu_0\in \liftclass{s}{V}$ that is disjoint from $\eta'$. The situation is illustrated in Figure~\ref{fig:separationProof}.

Consider the compactified disk $W_0 = \hyp\cup\partial\hyp$ and the corresponding punctured surface $W=W_0\setminus V_s$. Topologically, $W$ is a genus zero surface with one boundary component and $2\leq \card{V}<\infty$ punctures. A \emph{simple arc} in $W$ is a continuous injection $\beta\colon[0,1]\to W_0$ with $\beta\inv(\partial\hyp\cup V_s) = \{0,1\}$. All isotopies of a simple arc $\beta$ in $W$ are required to fix its endpoints in $\partial\hyp\cup V_s$ pointwise. It is a basic fact from surface topology that if a simple arc is isotopically disjoint from two other arcs, then it can be isotoped to be simultaneously disjoint from both. For example, this may be achieved by taking geodesic representatives in any hyperbolic metric on $W$ in which $\partial W$ is geodesic and the punctures are modeled on finite-volume cusps; see also  \cite[Expos\'e 3]{FLP79} or \cite[Corollary 1.9 and \S1.2.7]{FarbMargalit}. 

The two rays $\eta$ and $\eta'$ now define disjoint simple arcs in $W$, and $\liftclass{s}{V}$ becomes an isotopy class of simple arcs in $W$. By the Jordan Curve Theorem, the union $\eta\cup\eta'$ separates $W$ into two path connected components, and the choice of $e$ ensures that $\proj(\rho(s))$ and $e$ are contained in opposite components of $W\setminus(\eta\cup\eta')$. By hypothesis, arcs in $\liftclass{s}{V}$ are isotopically disjoint from both $\eta$ and $\eta'$; therefore, we may find a representative $\mu\in \liftclass{s}{V}$ that is simultaneously disjoint from both $\eta$ and $\eta'$. Since any such $\mu$ is contained in one component of $W\setminus(\eta\cup\eta')$, this contradicts the fact that $\mu$ necessarily intersects both $\proj(\rho(s))$ and $e$.
\end{proof}

In order to apply Lemma~\ref{lem:separation} recursively, we need to identify dynamic marked points $x\in \marked$ and subsets $V\subset \marked$ for which the tail $\eta = x(\R_{\leq s})$ is disjoint from $\liftclass{s}{V}$. This is easily accomplished by ensuring that the marked points in $V$ never cross the ray $x(\R_{\leq t})$.

\begin{lem}[Finding disjoint tails]\label{lem:disjointTails}
Let $V = (y_1,\dotsc,y_n,x)\subset \marked$ be a finite set of  marked points, and suppose that $(y_1(\R),\dotsc,y_n(\R),x(\R))$ is a $\liftgamma$-chain. If $y_n$ passes in front of $x$ and $\liftclass{0}{V}$ is disjoint from $x(\R)$, then, at each time $t\geq 0$,  $\liftclass{t}{V}$ is disjoint from the ray $x(\R_{\leq t})$.
\end{lem}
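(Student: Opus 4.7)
The plan is to produce a concrete representative of $\liftclass{t}{V}$ disjoint from $x(\R_{\leq t})$ by transporting a time-$0$ representative via the ambient isotopy $\tilde{F}_t$. The disjointness hypothesis at time $0$ allows one to fix a representative $\mu_0 \in \liftclass{0}{V}$ with $\mu_0 \cap x(\R) = \emptyset$, and setting $\mu_t \colonequals \tilde{F}_t(\mu_0) \in \liftclass{t}{V}$ yields $\mu_t \cap \tilde{F}_t(x(\R_{\leq 0})) = \emptyset$ automatically, since $\tilde{F}_t$ is a homeomorphism. It therefore suffices to prove that the transported past-ray $\tilde{F}_t(x(\R_{\leq 0}))$ is isotopic in $(\hyp, V_t)$ rel endpoints to the actual past-ray $x(\R_{\leq t})$; an ambient isotopy of $(\hyp, V_t)$ realizing the arc isotopy then carries $\mu_t$ to a representative of $\liftclass{t}{V}$ geometrically disjoint from $x(\R_{\leq t})$.

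To prove the arc isotopy, pull everything back via $\tilde{F}_t^{-1}$ and consider the continuous one-parameter family $\tilde{\nu}_u \colonequals \tilde{F}_u^{-1}(x(\R_{\leq u}))$ for $u \in [0, t]$. Each $\tilde{\nu}_u$ is an embedded arc in $\hyp$ from the $\partial\hyp$ endpoint of $x(\R)$ (fixed by every $\tilde{F}_u$) to $\tilde{F}_u^{-1}(x(u)) = x(0) \in V_0$, with $\tilde{\nu}_0 = x(\R_{\leq 0})$ and $\tilde{\nu}_t = \tilde{F}_t^{-1}(x(\R_{\leq t}))$. For this family to constitute an isotopy of proper arcs in $(\hyp, V_0)$ rel endpoints, the interior of each $\tilde{\nu}_u$ must avoid $V_0 \setminus \{x(0)\}$; after applying $\tilde{F}_u$, this reduces to the assertion that $y_j(u) \notin x(\R_{\leq u})$ for every $j \leq n$ and every $u \in [0, t]$.

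This final verification is exactly where the two structural hypotheses of the lemma intervene. For $j < n$, the $\liftgamma$-chain condition gives $y_j(\R) \cap x(\R) = \emptyset$, so $y_j(u)$ never lies on the geodesic $x(\R)$. For $j = n$, the geodesics meet in a single point $q$, and $y_n(u) = q$ only at the unique time $u = t_n$ when $y_n$ visits $q$; the ``passes in front of $x$'' hypothesis then gives $t_n < t_x$, where $x(t_x) = q$, so $q = x(t_x) \in x(\R_{>t_n})$ and in particular $q \notin x(\R_{\leq t_n})$, as required. Pushing the resulting isotopy of arcs in $(\hyp, V_0)$ forward by $\tilde{F}_t$ produces the desired isotopy in $(\hyp, V_t)$.

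The main subtlety is the detour through the transported ray $\tilde{F}_t(x(\R_{\leq 0}))$: a direct comparison of $\mu_t$ with $x(\R_{\leq t})$ gives nothing, since in general $\mu_t$ may intersect $x(\R_{\leq t})$ geometrically. Only after rewriting $x(\R_{\leq t})$ up to isotopy as an arc automatically disjoint from $\mu_t$ do the chain and passes-in-front hypotheses get the chance to do their work, by guaranteeing that the intermediate arcs $\tilde{\nu}_u$ never collide with any puncture of $V_0$.
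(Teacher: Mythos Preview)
Your argument is correct and takes a genuinely different route from the paper's. The paper first isolates a \emph{static} open set~$U$ (a thin neighborhood of an initial segment of the ray $x(\R)$) and proves a general preservation principle: if every marked point in~$V$ either stays inside~$U$ or stays outside its closure during a time interval, then disjointness of $\liftclass{t}{V}$ from~$U$ persists across that interval. It then applies this principle in two stages---before and after the moment $y_n$ has crossed $x(\R)$---using the chain condition to keep the $y_i$ outside~$\overline{U}$ and using the passes-in-front hypothesis to handle the transition. Your approach instead tracks the \emph{moving} ray directly: transporting a time-$0$ representative by~$\tilde{F}_t$ and then exhibiting the one-parameter family $\tilde{\nu}_u=\tilde{F}_u^{-1}(x(\R_{\leq u}))$ as an isotopy of arcs in $(\hyp,V_0)$. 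The two hypotheses enter at exactly the same point (the check that $y_j(u)\notin x(\R_{\leq u})$), but your packaging avoids the two-stage split and the auxiliary open set, at the cost of appealing to isotopy extension for proper arcs with one end on~$\partial\hyp$. Both are clean; yours is arguably more direct, while the paper's static-open-set claim is a reusable lemma that sidesteps any subtlety about extending arc isotopies at infinity.
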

\begin{proof}
The result will follow easily from the following basic principle.

\begin{claim*}\label{lem:disjointSubsets}
Let $[s,s']$ be a time interval, and let $U\subset \hyp$ be a connected open set with the property that for each $\rho\in V$, the image $\rho([s,s'])$ is either contained in $U$ or is disjoint from the closure of $U$. If $\liftclass{s}{V}$ is disjoint from $U$, then $\liftclass{t}{V}$ is disjoint from $U$ for all $t\in [s,s']$.
\end{claim*}
\begin{proof}[Proof of Claim]
The isotopy of $\hyp$ that pushes the marked points in $V$ may be taken to be the identity away from the marked points, that is, off of an open neighborhood of $\cup_{\rho\in V}\rho([s,s'])$. In particular, we may assume that this isotopy is the identity on $\partial U$ throughout the time interval $[s,s']$. Applying this isotopy to a representative path $\mu\in\liftclass{s}{V}$ that is disjoint from $U$, we see that each isotopy class $\liftclass{t}{V}$ has a representative that is disjoint from $\partial U$ and therefore from $U$.
\end{proof}
We now complete the proof of Lemma~\ref{lem:disjointTails}.
Let $t_1,t_2\in \R$ be the times defined by $y_n(t_1) = z = x(t_2)$, where $\{z\} = y_n(\R)\cap x(\R)$. The hypothesis on passing is that $t_1 < t_2$, and we choose a point $t_0\in (t_1,t_2)$. Assuming that $t_0 > 0$, let $U$ be a small open neighborhood of $x(\R_{\leq t_0})$ whose closure is disjoint from $y_n(\R)$ and $\liftclass{0}{V}$. The chain condition implies we may choose $U$ so that, for $i<n$, the marked point $y_i$ avoids the closure of $U$ throughout all of time. Since $x$ remains inside $U$ during the interval $[0,t_0]$, the above claim implies that $\liftclass{t}{V}$ is disjoint from $U$, and thus from $x(\R_{\leq t})$, for each time $t\in [0,t_0]$. In the case that $t_0 \leq 0$, we simply note that the hypotheses ensure that $\liftclass{0}{V}$ is disjoint from $x(\R_{\leq 0})$.

It remains to consider a time $t \geq s_0 =\max\{t_0,0\}$. At time $s_0$, $y_n$ has already crossed $x(\R)$; thus there is no obstruction to sliding any intersections of $\liftclass{s_0}{V}$ with $x(\R_{\geq s_0})$ forward along $x(\R)$ to obtain a representative path $\mu\in \liftclass{s_0}{V}$ that is disjoint from $x(\R_{\leq t})$. Enlarging $U$ to a neighborhood $U'$ of $x(\R_{\leq t})$, we find that $\liftclass{s_0}{V}$ is disjoint from $U'$ and that all of the marked points $y_i$ avoid $U'$ throughout the interval $[s_0,t]$. A second application of the claim now shows that $\liftclass{t}{V}$ is disjoint from $x(\R_{\leq t})$.
\end{proof}

\subsection{The points that push}\label{sec:pointsThatPush}

The stage is set: we have developed the navigational tools and built up the machinery for pushing in the universal cover $\hyp$. Everything is in place to exhibit exponentially many dynamic marked points that constrain $\liftclass{t}{\marked}$.

Recall our pushing curve $\gamma\colon[0,1]\to S$, which is a geodesic loop based at $\gamma(0) = p$. We need a way to refer to the self-intersection points of $\gamma$. If each point of intersection on $S$ corresponded to a double intersection of $\gamma$, then we could simply label the self-intersections by their corresponding points in $S$ (this is the approach we will take in \S\S\ref{sec:trainTracks}--\ref{sec:upperbounds} below). However, it may be that that some points in $S$ correspond to, say, triple intersections of $\gamma$ or, in the the extreme case, that all self-intersections occur at a single point of $S$. To accommodate such possibilities, a \emph{self-intersection point} $q$ of $\gamma$ will mean an ordered pair $q =(\itime{q}{1},\itime{q}{2})$ of times $0\leq\itime{q}{1}<\itime{q}{2}\leq 1$ for which $\gamma(\itime{q}{1})=\gamma(\itime{q}{2})$. By the Definition \ref{defn:selfIntNum} of self-intersection number and the fact that $\gamma$ is a geodesic, $\gamma$ has exactly $\intnum(\gamma)$ self-intersection points.

For each self-intersection point $q$ of $\gamma$, we form the decomposition $\gamma = \beta_q\delta_q\nu_q$, where
\begin{equation}\label{eqn:gammaDecomp}
\beta_q = \gamma\vert_{[0,\itime{q}{1}]}, \qquad \delta_q = \gamma\vert_{[\itime{q}{1},\itime{q}{2}]}, \quad\text{and}\quad \nu_q = \gamma\vert_{[\itime{q}{2},1]}.
\end{equation} 
Skipping over the subloop $\delta_q$ based at $\gamma(\itime{q}{1})$, we form the concatenation $\turnat{q} = \beta_q\nu_q\in\pi_1(S,p)$. This is a piecewise geodesic loop based at $p$ which may be given the explicit parameterization
\begin{equation}\label{eqn:theTurns}
\turnat{q}(t) = \beta_q\nu_q =\begin{cases}
\gamma(\itime{q}{1}), & t_{q1} \leq t \leq t_{q2} \\
\gamma(t), & \text{otherwise.}
\end{cases}
\end{equation}
Each path lift $\lpath{x}{\turnat{q}}$ to $\hyp$ is a grid path that follows along a geodesic lift of $\gamma$ and then turns, at a lift of $\gamma(\itime{q}{1})$, onto a new geodesic. Accordingly, we refer to $\turnat{q}$ as the ``turn at $q$.''

Let $\turns = \{\gamma\}\cup\left(\bigcup_q \{\turnat{q}\}\right)\subset \pi_1(S,p)$ be the collection consisting of the ``straight loop'' $\gamma$ and these $\intnum(\gamma)$ turns. We will find dynamic marked points that constrain $\liftclass{t}{\marked}$ by following paths in $\hyp$ corresponding to words $\omega_1\dotsb\omega_n\in\pi_1(S,p)$ in the letters $\omega_i\in\turns$. For each $x\in\pi\inv(p)$, every such word $\omega_1\dotsb\omega_n$ lifts to a grid path $\mu=\lpath{x}{(\omega_1\dotsb\omega_n)}$ in $\hyp$. When $\mu$ is decomposed $\mu= \mu_1\dotsb\mu_j$ as a grid path, the number $j$ of straight segments is one more than the number of turns in $\mu$, that is, $j - 1  = \card{\{i : \omega_i\neq \gamma\}}$. In order to apply the theory we have developed, we need to consider grid paths whose straight segments define $\liftgamma$-chains. According to Lemma~\ref{lem:lazyChain}, this may be accomplished by ``padding'' the word $\omega_1\dotsb\omega_n$ with copies of $\gamma$ in order to ensure that the straight segments are long enough. 

Recall that $\bedge\subset \tree$ consists of the edge $\proj(\baselift)\in\edges$ and its adjacent vertices, that $\Gstab\leq G$ is the stabilizer of $\bedge$, and that $\basepoint$ is the fixed basepoint of $\hyp$.

\begin{prop}[The points]\label{prop:thePaths}
Let $k\geq 1$ be an integer, and let $h\in \Gstab$ be a deck transformation that preserves the base edge $\bedge$. Set $\omega_1 = \gamma$, and choose loops $\omega_2,\dotsc,\omega_k\in \turns$. For $1\leq i \leq k$, let $x_i\in \marked$ be the dynamic marked point whose location at time $5i$ is given by
\[x_i(5i) = (h\basepoint)\cdot\left[(\gamma^2\omega_1\gamma^2)\dotsb(\gamma^2\omega_i\gamma^2)\right].\]
If $V = \{x_1\dotsc,x_k\}$, then $x_k$ constrains $\liftclass{5k}{V}$ and, in particular, $\liftclass{5k}{\marked}$.
\end{prop}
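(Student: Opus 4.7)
The plan is to prove by induction on $i\in\{1,\ldots,k\}$ the intermediate statement that $x_i$ constrains $\liftclass{5i}{V_i}$, where $V_i=\{x_1,\ldots,x_i\}$; taking $i=k$ gives the main claim, and the ``in particular'' assertion for $\marked$ is then immediate from the inclusion $\liftclass{s}{\marked}\subseteq\liftclass{s}{V_k}$. For the base case, since marked points move by $\gamma$ per unit time and $\omega_1=\gamma$ gives $x_1(5)=(h\basepoint)\cdot\gamma^5$, one has $x_1(0)=h\basepoint$, and Proposition~\ref{prop:initialPushPoints} immediately yields that $x_1$ constrains $\liftclass{s}{\{x_1\}}$ for every $s\geq 1$, in particular for all $s\geq 5$.

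For the inductive step, assume the statement at stage $i$, fix any $s\geq 5(i+1)$, and apply Lemma~\ref{lem:separation} (Tails Separate) with $\rho=x_i$, $x=x_{i+1}$, and $V=V_{i+1}$. Each marked point $x_j$ travels at $\gamma$-speed along a single lift $l_j\in\liftgamma$, and because the grid path $\lpath{h\basepoint}{(\gamma^2\omega_1\gamma^2)\dotsb(\gamma^2\omega_k\gamma^2)}$ has internal straight segments of $\gamma$-length at least $4$, hence $\tree$-length at least $4\len(\gamma)\geq 3\len(\gamma)$, Lemma~\ref{lem:lazyChain} establishes that $(l_1,\ldots,l_k)$ is a $\liftgamma$-chain. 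When $\omega_{i+1}=\turnat{q}$ is a genuine turn, $l_i$ and $l_{i+1}$ meet at a single point that $x_i$ reaches at time $5i+2+\itime{q}{1}$ and $x_{i+1}$ at $5i+2+\itime{q}{2}$; since $\itime{q}{1}<\itime{q}{2}$ this verifies the passing hypothesis, while the fact that by time $s\geq 5(i+1)$ both points have advanced at least $2$ $\gamma$-units past their meeting easily exceeds the bound $|X|\leq\len(\gamma)+1$ from Lemma~\ref{lem:boundOnDivergence} and so yields divergence in $\tree$. (In the degenerate case $\omega_{i+1}=\gamma$ one has $x_{i+1}=x_i$ and $V_{i+1}=V_i$, so the claim transfers trivially.)

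The inductive hypothesis together with $\liftclass{s}{V_{i+1}}\subseteq\liftclass{s}{V_i}$ immediately gives that $\liftclass{s}{V_{i+1}}$ meets $\proj(x_i(s))$. Since every path also crosses $\bedge$, its projection must cover the unique $\tree$-geodesic between $\bedge$ and $\proj(x_i(s))$; applying Lemma~\ref{lem:lazyChain} to our grid path through epoch $i$, this geodesic enters $\proj(l_i)$ at the previous turn point $X_{i-1,i}$ (or, when $i=1$, at $\bedge\subset\proj(l_1)$) and traverses ample edges of $\proj(l_i)$ on the past side of $X_{i,i+1}=\proj(l_i)\cap\proj(l_{i+1})$, supplying the required edge $e$ in the infinite component. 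The remaining hypothesis --- that $\liftclass{s}{V_{i+1}}$ is disjoint from the tail $x_{i+1}(\R_{\leq s})$ --- I would obtain from Lemma~\ref{lem:disjointTails}, whose chain and passing hypotheses are already in hand, and whose initial-disjointness assumption I expect to be the main technical hurdle: I would establish that $\liftclass{0}{V_{i+1}}$ is disjoint from $l_{i+1}$ by sliding every transverse intersection of $\baselift$ with $l_{i+1}$ off to an endpoint at $\partial\hyp$ along $l_{i+1}$, which the $\liftgamma$-chain property makes possible because no other marked point of $V_{i+1}(0)$ lies on $l_{i+1}$.
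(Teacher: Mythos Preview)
Your overall architecture---induction on $i$, with Proposition~\ref{prop:initialPushPoints} for the base case and Lemmas~\ref{lem:lazyChain}, \ref{lem:separation}, \ref{lem:disjointTails} assembled for the inductive step---matches the paper's proof. The passing and divergence verifications are fine, and the edge $e$ is produced essentially as in the paper (the paper picks $e$ as an internal edge of the penultimate straight segment, which is your ``past side of $X_{i,i+1}$'').

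The genuine gap is the initial-disjointness hypothesis for Lemma~\ref{lem:disjointTails}. Your proposed argument---``slide every transverse intersection of $\baselift$ with $l_{i+1}$ off to an endpoint at $\partial\hyp$''---does not work. Both $\baselift$ and $l_{i+1}$ are complete geodesics; if they cross once, their endpoints on $\partial\hyp$ are linked, and by the Jordan curve theorem \emph{every} path in $\hyp$ with the same endpoints as $\baselift$ must meet $l_{i+1}$. The absence of marked points on $l_{i+1}$ is irrelevant to this topological obstruction. What you must actually show is that $\baselift$ and $l_{i+1}$ do not cross at all. The paper does this by using that $\baseloop\in\pants$, so $\proj(\baselift)$ is a single edge of $\tree$; this edge lies in the first $\len(\gamma)$ edges of $\proj(\mu_1)$ (since $\baselift$ meets $\lpath{h\basepoint}{\gamma}$), while $\proj(l_{i+1})$ is either disjoint from $\proj(l_1)$ (when $i+1>2$, by the chain property) or meets it only in the last $\len(\gamma)+1$ edges of $\proj(\mu_1)$ (when $i+1=2$). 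Either way $\proj(\baselift)\notin\proj(l_{i+1})$, hence the geodesics are disjoint.

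A minor point: when some $\omega_j=\gamma$ for $j\leq i$, your list $(l_1,\dotsc,l_{i+1})$ has repeated consecutive entries and is not a $\liftgamma$-chain as written; the grid-path decomposition merges those segments. The paper handles this by reindexing to distinct marked points $y_1,\dotsc,y_j$, and you should do the same before invoking Lemma~\ref{lem:lazyChain} or Lemma~\ref{lem:disjointTails}.
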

\begin{proof}
For notational convenience, we let $x_0\in \marked$ denote the dynamic marked point whose initial location is $x_0(0) = h\basepoint$. Notice that the marked points $x_i$ are defined so that
\[x_{i+1}(5i+5) = \left((h\basepoint)\cdot\left[(\gamma^2\omega_1\gamma^2)\dotsb(\gamma^2\omega_i\gamma^2)\right]\right)\cdot(\gamma^2\omega_{i+1}\gamma^2) = x_i(5i)\cdot(\gamma^2\omega_{i+1}\gamma^2).\]
Since the marked point $x_i$ travels from $x_i(5i)$ to $x_i(5i)\cdot\gamma^5$ during the time interval $[5i,5i+5]$, choosing $\omega_{i+1} = \gamma$ results in the equation $x_{i+1}(5i+5) = x_{i}(5i+5)$. As distinct marked points cannot be at the same place at the same time, this shows that choosing $\omega_{i+1} = \gamma$ is equivalent to setting $x_{i+1} = x_i$.

We proceed by induction on $k$, starting with the case $k= 1$ and $V = \{x_1\}$. The assignment $\omega_1 = \gamma$ ensures that $x_1 = x_0$ so that $V = \{x_0\}$. Proposition~\ref{prop:initialPushPoints} now shows that $x_1 = x_0$ constrains $\liftclass{1}{V}$ and, consequently, $\liftclass{5}{V}$.

For $k > 1$, we inductively assume that the marked point $x_{k-1}$ constrains $\liftclass{5(k-1)}{V'}$, where $V'=\{x_1,\dotsc,x_{k-1}\}$. Since $5(k-1) \leq 5k$ and $V'\subseteq V = \{x_1,\dotsc,x_k\}$, this implies that $x_{k-1}$ constrains $\liftclass{5k}{V}$. We must show that $x_k$ constrains $\liftclass{5k}{V}$ as well. The result is immediate if $x_k = x_{k-1}$, so it suffices to consider the case $\omega_k \neq \gamma$. The proof proceeds as a series of steps that establish the properties needed to apply Lemmas~\ref{lem:separation} and \ref{lem:disjointTails} and conclude the result.
\bigskip

\noindent
\textbf{Step 1:} \emph{Notation.} Each index $i\geq 2$ with $\omega_i = \gamma$ results in repeated entries in the list $x_1,\dotsc,x_k$; upon deleting all neighboring repeats, we obtain an ordered list $y_1,\dotsc,y_j$ of marked points $y_i\in \marked$ that satisfy $y_i \neq y_{i+1}$ and $\{y_1,\dotsc,y_j\} = V$. Here $k-j$ is the number of indices $i\in\{2,\dotsc,k\}$ for which $\omega_i = \gamma$. Since we assumed $x_k\neq x_{k-1}$, we have $y_j=x_k$ and $y_{j-1} = x_{k-1}$.

The path $\mu = \lpath{h\basepoint}{\left[(\gamma^2\omega_1\gamma^2)\dotsb(\gamma^2\omega_k\gamma^2)\right]}$ defines a grid path in $\hyp$ that may be decomposed as a concatenation $\mu = \mu_1\dotsb\mu_n$ of geodesic segments $\mu_i$ along lifts $l_i\in \liftgamma$ of $\gamma$ that satisfy $l_i \neq l_{i+1}$. The number $n$ of straight segments $\mu_i$ in this decomposition is equal to $1+m$, where $m = \card{\{1 \leq i \leq k : \omega_i \neq \gamma\}}$ is the number of turns in $\mu$. Noting that $k-j = (k-m)-1$, we find that $j = n$. It is clear from the definitions that the marked points $x_1,\dotsc,x_k$ travel along the straight segments of $\mu$. Upon reindexing them as $y_1,\dotsc,y_j$, the resulting marked point $y_i$ travels along the geodesic $y_i(\R) = l_i$ containing the segment $\mu_i$. Indeed, since each turn $\turnat{q}$ along $\mu$ results in both a new segment $\mu_i$ and a distinct marked point $y_i$, this follows inductively from the observation that $y_1(\R) = l_1$.
\bigskip

\noindent
\textbf{Step 2:} \emph{$x_{k-1}$ passes in front of $x_k$.} The assumption $x_k \neq x_{k-1}$ implies that the geodesics $x_{k-1}(\R)$ and $x_k(\R)$ intersect in a single point. Suppose that $\omega_k=\turnat{q}\in\turns$, where $q=(\itime{q}{1},\itime{q}{2})$ is a self-intersection point of $\gamma$, and $\turnat{q}=\beta_q\nu_q$ is the concatenation of the two geodesic segments $\beta_q$ and $\nu_q$ defined in (\ref{eqn:gammaDecomp}). It then takes $\itime{q}{1}$ time units for a marked point to travel across $\beta_q$ and $(1-\itime{q}{2})$ time units to cross $\nu_q$. Since $x_k(5k) = x_{k-1}(5k-5)\cdot\gamma^2\beta_q\nu_q\gamma^2$,
we see that 
\[x_{k-1}(5k-5 + 2 + \itime{q}{1}) = x_{k}(5k-2-(1-\itime{q}{2})).\] Therefore $x_{k-1}$ does pass in front of $x_k$ because $5k- 3 + \itime{q}{1} < 5k- 3 + \itime{q}{2}$.
\bigskip

\noindent
\textbf{Step 3:} \emph{$(l_1,\dotsc,l_j)$ is a $\liftgamma$-chain.} Each straight segment $\mu_i$, with $1 < i < j$, has length $\len(\mu_i) \geq 3\len(\gamma)$ because there are at least four copies of $\gamma$ between any two turns along $\mu$. (We in fact have $\len(\mu_i)\geq 4\len(\gamma)$; this will be used in the proof of Proposition~\ref{prop:distinctness} below.) Therefore $\mu=\mu_1\dotsb\mu_j$ satisfies the hypotheses of Lemma~\ref{lem:lazyChain}, and it follows that $(l_1,\dotsc,l_j)$ is a $\liftgamma$-chain. 
\bigskip

\noindent
\textbf{Step 4:} \emph{$\liftclass{s}{V}$ is disjoint from $y_j(\R_{\leq s})$.} The proof of Lemma~\ref{lem:lazyChain} shows that $\proj(l_1)$ and $\proj(l_j)$ are disjoint if $j >2$. Since $\proj(\mu_1)$ contains the base edge $\proj(\baselift)$ of $\tree$, this shows that $l_j$ and $\baselift$ are disjoint when $j > 2$. While $\proj(\mu_1)\cap \proj(l_j)$ is nonempty when $j=2$, the fact that $\baselift$ intersects the initial subpath $\lpath{h\basepoint}{\gamma}$ of $\mu_1=\lpath{h\basepoint}{(\gamma^5\dotsb)}$ implies that $\baselift$ is not one of the last $\len(\gamma)+1$ edges of $\proj(\mu_1)$ and therefore cannot be contained in the intersection $\proj(\mu_1)\cap \proj(l_2)$. In any case, we find that $\liftclass{0}{V}$ is disjoint from $y_j(\R)=l_j$. Since $(y_1(\R),\dotsc,y_j(\R))$ is a $\liftgamma$-chain and $y_{j-1}=x_{k-1}$ crosses in front of $y_j=x_k$, Lemma~\ref{lem:disjointTails} now implies that $\liftclass{s}{V}$ is disjoint from $y_j(\R_{\leq s})$ for all times $s\geq 0$.
\bigskip

\noindent
\textbf{Step 5:} \emph{$y_j(\R_{\leq s})$ enjoys the separation property of Lemma~\ref{lem:separation}.} Fix a time $s \geq 5k$ and consider the path
\[\mu' = \lpath{h\basepoint}{\left[(\gamma^2\omega_1\gamma^2)\dotsb(\gamma^2\omega_{k-1}\gamma^2)(\gamma^5\dotsb)\right]}\]
from $h\basepoint$ to $x_{k-1}(s) = y_{j-1}(s)$. Comparing this path with $\mu$, we find that $\mu'$ decomposes as a grid path $\mu' = \mu_1\dots\mu_{j-2}\mu_{j-1}'$, where $\mu_{j-1}$ is the initial subpath of $\mu_{j-1}'$. Let $e$ be an internal edge of $\proj(\mu_{j-1})$, in which case $e$ must lie in the infinite component of $\proj(y_{j-1}(\R_{\leq s}))\setminus\proj(y_j(\R))$. Since $\mu_{j-1}'\supseteq \mu_{j-1}$, $e$ is also an internal edge of $\proj(\mu_{j-1}')$. As $y_{j-1}$ passes in front of $y_j$ and these two marked points have diverged in $\tree$ by the time $s\geq 5k$, it now follows that the ray $\eta = y_j(\R_{\leq s})$ separates $\proj(y_{j-1}(s))$ from $e$ in the sense of Lemma~\ref{lem:separation}
\bigskip

\noindent
\textbf{Step 6:} \emph{$x_k$ constrains $\liftclass{5k}{V}$.} By our induction hypothesis, the isotopy class $\liftclass{s}{V}$ intersects $\proj(y_{j-1}(s))$. It follows that every path in $\liftclass{s}{V}$ projects onto the $\tree$-geodesic from $\proj(y_{j-1}(s))$ to the base edge $\bedge$. By Lemma~\ref{lem:lazyChain}, this $\tree$-geodesic contains the edge $e$, so it must be that $\liftclass{s}{V}$ intersects $e$ as well. Applying the separation property from Lemma~\ref{lem:separation}, we finally conclude that $\liftclass{s}{V}$ intersects $\proj(y_j(s))$. This proves that $x_{k}=y_j$ constrains $\liftclass{5k}{V}$.
\end{proof}

For each $h\in \Gstab$, we have now described $(\intnum(\gamma)+1)^{k-1} = \card{\turns}^{k-1}$ dynamic marked points that constrain the isotopy class $\liftclass{5k}{\marked}$. However, it remains to be seen that these marked points are distinct and that they project to distinct vertices in the quotient graph $\Gstab\backslash \tree$.

\begin{prop}[Distinctness]\label{prop:distinctness}
Let $k\geq 1$ be an integer. For each $h\in \Gstab$ and each ordered list $\omega = (\omega_1,\omega_2,\dotsc,\omega_k)$ of $k$ elements $\omega_i\in \turns$ satisfying $\omega_1 = \gamma$, consider the point
\[\Psi(h,\omega) = (h\basepoint)\cdot[(\gamma^2\omega_1\gamma^2)\dotsb(\gamma^2\omega_k\gamma^2)]\in \hyp.\]
For each distinct choice of $h$ and $\omega$, this point projects to a distinct vertex $\proj(\Psi(h,\omega))$ in $\tree$. In particular, the orbits $\Gstab\proj(\Psi(h,\omega))$ and $\Gstab\proj(\Psi(h',\omega'))$ are equal if and only if $\omega = \omega'$.
\end{prop}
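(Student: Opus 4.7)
The plan is to prove the main distinctness claim---that $(h,\omega)\mapsto \proj(\Psi(h,\omega))$ is injective---from which the ``In particular'' clause follows formally. Indeed, if $\omega=\omega'$ then $\Psi(h',\omega) = (h'h^{-1})\Psi(h,\omega)$ with $h'h^{-1}\in\Gstab$, so the two vertices lie in a common $\Gstab$-orbit; conversely, if the orbits agree then some $g\in\Gstab$ satisfies $\proj(\Psi(gh,\omega)) = g\proj(\Psi(h,\omega)) = \proj(\Psi(h',\omega'))$, and injectivity forces $\omega=\omega'$.

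For the injectivity, I would analyze the grid path $\mu = \lpath{h\basepoint}{W} = \mu_1\cdots\mu_j$ where $W = (\gamma^2\omega_1\gamma^2)\cdots(\gamma^2\omega_k\gamma^2)$ and each $\mu_i$ lies along a lift $l_i\in\liftgamma$. The proof of Proposition~\ref{prop:thePaths} (Steps~1 and~3) already establishes that $(l_1,\ldots,l_j)$ is a $\liftgamma$-chain; moreover, because at least four copies of $\gamma$ separate any two turns, every middle segment has $\tree$-length $\geq 4\len(\gamma)$, while the first and last segments have length $\geq 2\len(\gamma)$ (using $\omega_1=\gamma$ and that $W$ ends with $\gamma^2$). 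These lengths strictly exceed the overlap bound $\len(\gamma)+1$ from Lemma~\ref{lem:boundOnDivergence}, so each $l_i$ is the \emph{unique} element of $\liftgamma$ whose projection contains any subarc of $\proj(\mu_i)$ longer than this overlap.

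Now suppose $(h,\omega)$ and $(h',\omega')$ yield the same vertex $v_k = \proj(\Psi(h,\omega))$. The two $\tree$-geodesics from $\proj(h\basepoint)$ and $\proj(h'\basepoint)$ to $v_k$ share a long terminal subgeodesic $P$. By Lemma~\ref{lem:lazyChain}, $P$ contains every internal edge of each $\proj(\mu_i)$ and each $\proj(\mu_i')$, with cancellations at junctions confined to bounded windows of size $\leq \len(\gamma)+1$. Reading $P$ edge by edge and invoking the uniqueness from the previous paragraph recovers the chains $(l_i)$ and $(l_i')$ along with the $\tree$-lengths of the corresponding segments, and these must agree. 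From this common data one recovers $\omega = \omega'$: each junction $l_i\to l_{i+1}$ occurs at a unique intersection point in $\hyp$ that projects to a self-intersection $q_i$ of $\gamma$ and determines the turn letter $\turnat{q_i}$; the $\tree$-length of each $\proj(\mu_i)$ then pins down the number of intervening $\omega_m=\gamma$ letters between turns. Finally, $h = h'$ follows from the fact that any nontrivial element of $\Gstab$ has fixed set in $\tree$ equal to $\bedge$---since distinct edge stabilizers in the Bass--Serre tree of the pants decomposition intersect trivially and $G$ is torsion-free---combined with the observation that $v_k \notin \bedge$, which is verified directly using the internal-edge count for $\mu_1$ together with Lemma~\ref{lem:lazyChain}.

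The hard part is the ``reading off'' step: one must show that the run structure of $P$ unambiguously decomposes into $j$ maximal runs along $\proj(l_1),\ldots,\proj(l_j)$ matched between the two grid paths. This relies on the segment lengths strictly exceeding the Lemma~\ref{lem:boundOnDivergence} overlap (so each run identifies its lift uniquely), on Lemma~\ref{lem:lazyChain}'s guarantee that internal edges do not cancel at junctions, and on checking that the shared terminal subgeodesic $P$ is long enough to contain all internal edges of both decompositions.
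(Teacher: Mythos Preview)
Your derivation of the ``In particular'' clause from injectivity is correct, and your endgame---that $v_k\notin\bedge$ together with the fact that nontrivial elements of $\Gstab$ fix only $\bedge$ forces $h=h'$ once $\omega=\omega'$---is also fine. The gap is in the step that is supposed to produce $\omega=\omega'$.

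The unjustified assertion is that the shared terminal subgeodesic $P$ ``contains every internal edge of each $\proj(\mu_i)$ and each $\proj(\mu'_i)$.'' Lemma~\ref{lem:lazyChain} says that the full $\tree$-geodesic from $\proj(h\basepoint)$ to $v_k$ contains the internal edges of the $\proj(\mu_i)$, and likewise for the primed geodesic; it says nothing about their common terminal portion. In a tree, two geodesics ending at the same vertex may share an arbitrarily short terminal segment---possibly only $v_k$ itself. Concretely, $l_j$ and $l'_{j'}$ are two a~priori different elements of $\liftgamma$ whose projections both contain $v_k$, and Lemma~\ref{lem:boundOnDivergence} permits them to overlap on at most $\len(\gamma)+1$ edges; nothing you have written rules out the two geodesics entering $v_k$ along different edges. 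Once $P$ can be shorter than the overlap bound, your uniqueness claim for the terminal lift cannot be invoked and the ``reading off'' recursion never starts. The final paragraph of your proposal acknowledges that this is ``the hard part,'' but the ingredients you list (segment lengths exceeding the overlap, Lemma~\ref{lem:lazyChain}) control each geodesic separately and do not by themselves force $P$ to be long.

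The paper avoids this by arguing from the other end. It first handles the $h$-dependence using the \emph{initial} structure: since $\omega_1=\gamma$, the $\tree$-geodesic from a fixed vertex adjacent to $\baselift$ to $\proj(\Psi(h,\omega))$ begins $(\baselift,hb_2,\ldots,hb_m,\ldots)$ for an internal edge $b_m$ independent of $\omega$, and distinct $h$ give distinct $hb_m$. Then, for fixed $h$ and $\omega\neq\omega'$, it locates the \emph{first} index $j$ of disagreement, introduces auxiliary points $y,y'$ just past that junction, and proves directly---via a case analysis on whether $\omega_j$ or $\omega'_j$ equals $\gamma$ and explicit length estimates---that the $\tree$-geodesic from $\proj(y)$ to $\proj(y')$ has length at least $\len(\gamma)+2$ and contains specified edges $\bar e$ and $e'$. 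Lemma~\ref{lem:lazyChain} applied to the \emph{remaining} grid paths then shows $e$ and $e'$ separate the two endpoints. The point is that working forward from the first disagreement gives simultaneous control of both paths up to that moment; working backward from $v_k$, as you propose, gives no a~priori lower bound on where the two geodesics merge.
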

\begin{proof}
We first deal with the dependence on $h\in \Gstab$. Choose any list $\omega$ and consider the path $\mu=\lpath{\basepoint}{[(\gamma^2\omega_1\gamma^2)\dotsb(\gamma^2\omega_k\gamma^2)]}$ from the basepoint $\basepoint$ to $\Psi(1,\omega)$. This defines a grid path $\mu=\mu_1\dotsb\mu_n$ whose straight segments $\mu_i$ satisfy the hypotheses of Lemma~\ref{lem:lazyChain}. Since $\omega_1=\gamma$, the first straight segment $\mu_1$ contains the initial subpath $\lpath{\basepoint}{\gamma^5}$ and has length $\len(\mu_1)\geq 5\len(\gamma)$. The beginning $\lpath{\basepoint}{\gamma}$ of this path intersects the base edge $\baselift\in\edges$ of $\tree$; therefore $\proj(\lpath{\basepoint}{\gamma^5})$ must contain a geodesic edge path of the form $(\baselift,b_2,\dotsc,b_m)$, where $b_m$ is an internal edge of $\proj(\lpath{\basepoint}{\gamma^5})$. Notice that $b_m$ is also an internal edge of $\proj(\mu_1)$ and that the choice of $b_m$ does not depend on $\omega$. If $v\in \vertices$ denotes the initial vertex of this edge path, then both $\baselift$ and $v$ are fixed by every element of $\Gstab$.  Lemma~\ref{lem:lazyChain} now implies that the $\tree$-geodesic from $\proj(\basepoint)$ to $\proj(\Psi(1,\omega))$ contains the edge $b_m$; in particular, the $\tree$-geodesic from $v$ to $\proj(\Psi(1,\omega))$ must have the form \[(\baselift,b_2,\dotsc,b_m,\dotsc).\]
Applying a deck transformation $h\in \Gstab$, we see that the $\tree$-geodesic from $hv= v$ to $h\proj(\Psi(1,\omega)) = \proj(\Psi(h,\omega))$ has the form $(\baselift,hb_2,\dotsc,hb_m\dotsc)$. That is, the $m^{\text{th}}$ edge of the $\tree$-geodesic from $v$ to $\proj(\Psi(h,\omega))$ is $hb_m$. This feature is independent of $\omega$. Since $hb_m \neq h'b_m$ for distinct $h,h'\in \Gstab$, this proves that the vertices $\proj(\Psi(h,\omega))$ and $\proj(\Psi(h',\omega'))$ are distinct when $h\neq h'$.

It remains to consider the dependence on $\omega$. The following notation will aid our analysis. Let $M$ denote the optimal upper bound from Lemma~\ref{lem:boundOnDivergence}; thus $M$ is either $\len(\gamma)+1$ or $\len(\gamma)$ depending on whether or not $\len(\gamma) > 2$. Furthermore, in the case that $\len(\gamma)\geq 3$, we take a decomposition $\gamma = \xi_1\xi_2$ of $\gamma$ into two subpaths which satisfy $\len(\xi_1) = 2$ and $\len(\xi_2) \geq 1$. If $\len(\gamma)\leq 2$, we instead choose this decomposition such that $\len(\xi_1) = 1$ and $\len(\xi_2) \leq 1$. Notice that $M+1 = \len(\gamma)+\len(\xi_1)$.  

For the remainder of the proof, we may consider a fixed element $h\in \Gstab$. Let $\omega=(\omega_1,\dotsc,\omega_k)$ and $\omega' = (\omega_1',\dotsc,\omega_k')$ be two distinct lists, and let $j$ be the smallest index with $\omega_j\neq \omega'_j$. Set $e\in \edges$ to be the last edge that the path $\eta = \lpath{h\basepoint}{[(\gamma^2\omega_1\gamma^2)\dotsb(\gamma^2\omega_j\gamma^2)\xi_1]}$ crosses, and let $y = (h\basepoint)\cdot\eta$ be the endpoint of this path. Define $e'\in \edges$ and $y' = (h\basepoint)\cdot \eta'$ similarly. The bulk of our argument is devoted to proving the following claim.

\begin{claim*} The $\tree$-geodesic from $\proj(y)$ to $\proj(y')$ has length at least $\len(\gamma)+2$ and contains both $\bar{e}$ and $e'$. This essentially means that the two geodesics connecting $\proj(h\basepoint)$ to $\proj(y)$ and $\proj(y')$ have diverged in $\tree$.
\end{claim*}
\begin{proof}[Proof of Claim] First consider the case that neither $\omega_j$ nor $\omega_j'$ is equal to $\gamma$. Using the notation of (\ref{eqn:theTurns}), we then have $\omega_j = \turnat{q}=\beta_q\nu_q$ and $\omega'_j = \turnat{q'}=\beta_{q'}\nu_{q'}$ for two distinct self-intersection points $q,q'$ of $\gamma$. Let
\[z = (h\basepoint)\cdot[(\gamma^2\omega_1\gamma^2)\dotsb(\gamma^2\omega_{j-1}\gamma^2)(\gamma^2\beta_q)]\]
be the point where $\eta$ makes its final turn towards $y$, and let $A = \lpath{z}{(\nu_q\gamma^2\xi_1)}$ be the geodesic from $z$ to $y$. Define $z'$ and $A'$ similarly. The geodesic segment $X$ from $z$ to $z'$ is then a subpath of a segment of the form $\lpath{x}{\gamma}$; as such, it has $\len(X)\leq \len(\gamma)$. On the other hand, the edge paths $\proj(A)$ and $\proj(A')$ both have length at least $2\len(\gamma)+\len(\xi_1)$, and their intersection $\proj(A)\cap\proj(A')$ contains at most $M$ edges.

As in the proof of Lemma~\ref{lem:lazyChain}, $\proj(\bar{A}XA')$ is an edge path from $\proj(y)$ to $\proj(y')$ that can be made into a $\tree$-geodesic by successively cancelling edge pairs $(\dotsc,d,\bar{d},\dotsc)$ to remove any backtracking. The path $\proj(X)$ can contribute to at most $\len(X)$ cancellations, and, assuming all of these edges cancel, we can then have at most $M$ cancellations involving edges of $\proj(\bar{A})$ with edges of $\proj(A')$. Therefore, the $\tree$-geodesic $L$ from $\proj(y)$ to $\proj(y')$ will be obtained from $\proj(\bar{A}XA')$ after at most $\len(X)+M$ cancellations. Since each cancellation removes two edges, it follows that
\begin{align*}
\len(L)& \geq 2\big(2\len(\gamma)+\len(\xi_1)\big)+\len(X)-2\big(\len(X)+M\big)\\
 & = 2\len(\gamma)-\len(X)+2
 \geq \len(\gamma)+2.
\end{align*}
Furthermore, since $\proj(\bar{A})$ and $\proj(A')$ each contain at least $2\len(\gamma)+\len(\xi_1)\geq \len(X)+M+1$ edges, we see that the first edge of $\proj(\bar{A})$ and the last edge of $\proj(A')$ do not cancel. As these edges are exactly $\bar{e}$ and $e'$, the claim holds when neither $\omega_j$ nor $\omega_j'$ is equal to $\gamma$.

The argument for the case $\omega_j \neq \omega_j' = \gamma$ is similar: Define $z$ as above and again let $A = \lpath{z}{(\nu_q\gamma^2\xi_1)}$ be the geodesic from $z$ to $y$. The geodesic from $z$ to $y'$ is then given by $A' = \lpath{z}{(\delta_q\nu_q\gamma^2\xi_1)}$, where $\delta_q$ is as in (\ref{eqn:gammaDecomp}). Since these are both segments along geodesics in $\liftgamma$, the concatenation $\proj(\bar{A})\proj(A')$ can result in at most $M$ cancellations. The resulting $\tree$-geodesic from $\proj(y)$ to $\proj(y')$ has length at least
\begin{align*}
2\big(2\len(\gamma)+\len(\xi_1)\big)-2\len(M)\geq 2\len(\gamma)+2
\end{align*}
and still contains the initial and terminal edges $\bar{e}$ and $e'$. This proves the claim.
\end{proof}

We now complete the proof of Proposition~\ref{prop:distinctness}. To prove that $\Psi(h,\omega)$ and $\Psi(h,\omega')$ lie in distinct vertices of $\tree$, it suffices to show that the $\tree$-geodesic between these vertices is nondegenerate. To ease the notation, set $x=\Psi(h,\omega)$ and $x'=\Psi(h,\omega')$. First suppose that $j=k$, in which case we have $y = x \cdot \xi_1$ and $y' = x'\cdot \xi_1$. Together with the above claim, the triangle inequality then implies that
\[d(\proj(x),\proj(x'))\geq d(\proj(y),\proj(y'))-2\len(\xi_1) \geq \len(\gamma)+2-2\len(\xi_1) \geq 1,\]
where $d$ is the path metric in $\tree$. In the case that $j < k$, we instead consider the path $\mu = \lpath{h\basepoint}{[(\gamma^2\omega_1\gamma^2)\dotsb(\gamma^2\omega_k\gamma^2)]}$ from $h\basepoint$ to $x$. This is a grid path whose straight segments satisfy the hypotheses of Lemma~\ref{lem:lazyChain}. Let $\mu_i$ be the straight segment of $\mu$ containing the edge $e$. Then $\mu_i$ contains a subpath of the form $\gamma^2\xi_1\xi_2\gamma$, where $e$ is the last edge that the $\xi_1$ factor crosses. It is now evident that $e$ separates $\mu_i$ into two edge paths of lengths at least $2\len(\gamma)$ and $\len(\gamma)+\len(\xi_2)\geq M$. Therefore, the definition of $\xi_2$ ensures $e$ is an internal edge of $\proj(\mu_i)$. Lemma~\ref{lem:lazyChain} now implies that the $\tree$-geodesic from $\proj(h\basepoint)$ to $\proj(x)$ contains $e$ and, similarly, that $\tree$-geodesic from $\proj(h\basepoint)$ to $\proj(x')$ contains $e'$. Writing these geodesic edge paths as $(b_1,\dots,b_n,e,a_1,\dots,a_m)$ and $(b'_1,\dotsc,b'_{n'},e',a'_1,\dots,a'_{m'})$, and combining them with the geodesic $(\overline{e},d_1,\dots,d_l,e')$ from $\proj(y)$ to $\proj(y')$, we find that
\[(\overline{a_m},\dotsc,\overline{a_1},\overline{e},d_1,\dotsc,d_l,e',a'_1,\dotsc,a'_{m'})\]
is a nondegenerate, non-backtracking edge path from $\proj(x)$ to $\proj(x')$.
\end{proof}

\subsection{The point of pushing: proof of the lower bound}\label{sec:countingIntersections}

Now that we have found distinct orbits of dynamic marked points that constrain $\liftclass{t}{\marked}$, it is a simple matter to count intersection numbers and bound the dilatation $\pushdil{\gamma}$. We first state the following corollary to the above propositions.

\begin{cor}[Intersection numbers]\label{cor:intersectionNumbers}
Let $\gamma\in\pi_1(S,p)$ be a filling loop that represents a primitive element of $\pi_1(S,p)$, and let $\baseloop$ be an essential simple closed curve on $S$ that is contained in $S\setminus\{p\}$. Choose any pants decomposition $\pants = \{\seam\}$ of $S$ that contains $\baseloop$ and consists of curves contained in $S\setminus\{p\}$. Then for all integers $k\geq 1$, the iterates $\varphi_\gamma^{5k}(\baseloop)\subset(S,p)$ of $\baseloop\subset(S,p)$ under the point-pushing homeomorphism $\varphi_\gamma$ satisfy
\[ \intnum(\varphi_\gamma^{5k}(\baseloop),\pants) = \sum_i \intnum(\varphi_\gamma^{5k}(\baseloop),\seam)\geq (\intnum(\gamma)+1)^{k-1}.\]
\end{cor}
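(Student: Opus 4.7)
The idea is to combine the constraining marked points of Proposition~\ref{prop:thePaths}, the orbit‐distinctness of Proposition~\ref{prop:distinctness}, and the intersection‐counting criterion of Observation~\ref{obs:countingIntersections}.

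First I would fix a simple closed curve $\mu$ in the isotopy class of $\varphi_\gamma^{5k}(\baseloop)$ in $(S,p)$ that is transverse to $\pants$ and realizes $|\mu\cap\pants|=\intnum(\varphi_\gamma^{5k}(\baseloop),\pants)$, and let $\tilde{\mu}\subset(\hyp,\pi\inv(p))$ be its unique lift whose endpoints in $\partial\hyp$ coincide with those of $\baselift$. Then $\tilde{\mu}$ represents the isotopy class $\liftclass{5k}{\marked}$, so by Observation~\ref{obs:countingIntersections} it suffices to show that $\proj(\tilde{\mu})$ crosses at least $(\intnum(\gamma)+1)^{k-1}$ distinct $\Gstab$‐orbits of edges of $\tree$.

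For each tuple $\omega=(\gamma,\omega_2,\dotsc,\omega_k)\in\{\gamma\}\times\turns^{k-1}$, Proposition~\ref{prop:thePaths} applied with $h=1\in\Gstab$ yields a dynamic marked point constraining $\liftclass{5k}{\marked}$ whose location at time $5k$ is exactly $\Psi(1,\omega)$; hence every representative of $\liftclass{5k}{\marked}$, and in particular $\tilde{\mu}$, must pass through the vertex $\proj(\Psi(1,\omega))\in\vertices$. Varying $\omega$, Proposition~\ref{prop:distinctness} guarantees that these $|\turns|^{k-1}=(\intnum(\gamma)+1)^{k-1}=:N$ vertices lie in pairwise distinct $\Gstab$‐orbits. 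Independently, since $\tilde{\mu}$ shares its endpoints at infinity with $\baselift$, the bi‐infinite edge path $\proj(\tilde{\mu})$ must cross the base edge $\bedge$, and therefore visit both of its endpoints $v_L,v_R$; because $\Gstab$ acts on $\hyp$ by hyperbolic translations along $\baselift$ it fixes $\bedge$ pointwise, so $v_L$ and $v_R$ each constitute their own $\Gstab$‐orbit. An application of Lemma~\ref{lem:lazyChain} to the grid path reaching $\Psi(1,\omega)$, whose first straight segment has length at least $5\len(\gamma)$ and contains $\bedge$ among its initial edges, shows that each constraint vertex $\proj(\Psi(1,\omega))$ lies at positive $\tree$‐distance from $\bedge$; since $\Gstab$ preserves distance from $\bedge$, no $\Gstab$‐orbit of a constraint vertex contains $v_L$ or $v_R$.

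Altogether, the image of $\proj(\tilde{\mu})$ in the quotient graph $\Gstab\backslash\tree$ is a connected subgraph containing at least $N+2$ distinct vertices, and any such subgraph has at least $N+1\geq N$ edges. Observation~\ref{obs:countingIntersections} then gives $|\mu\cap\pants|\geq N=(\intnum(\gamma)+1)^{k-1}$, as required. The conceptual work has been done in Propositions~\ref{prop:thePaths} and~\ref{prop:distinctness}; the only remaining obstacle is the vertex‐to‐edge bookkeeping in the quotient graph, which reduces to verifying that the constraint vertices are separated from the two vertices of $\bedge$—and this is exactly what the padding of each $\omega_i$ by copies of $\gamma^2$ on either side ensures.
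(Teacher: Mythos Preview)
Your proposal follows the paper's approach closely and invokes the right ingredients (Propositions~\ref{prop:thePaths} and~\ref{prop:distinctness} together with Observation~\ref{obs:countingIntersections}). The one soft spot is the final vertex-to-edge bookkeeping. You assert that $\proj(\tilde{\mu})$ must cross the edge $\bedge$---and hence visit both endpoints $v_L$ and $v_R$---simply because $\tilde{\mu}$ shares its ideal endpoints with $\baselift$. That inference is not valid: a path with the same two ideal endpoints as the geodesic $\baselift$ can lie entirely on one side of $\baselift$, in which case its projection never crosses $\bedge$ and visits only one of $v_L,v_R$. (The paper's remark before Definition~\ref{defn:constrains} guarantees only that $\liftclass{5k}{\marked}$ intersects $\bedge$, i.e., meets $v_L\cup\baselift\cup v_R$, not that it crosses the edge.)

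The paper sidesteps this entirely by exploiting the fact that $\tilde{\mu}$ is $\Gstab$-periodic, so $\proj(\tilde{\mu})$ descends to a \emph{closed} edge path in $\Gstab\backslash\tree$. A closed edge path of length $m$ visits at most $m$ distinct vertices; since it visits the $N=(\intnum(\gamma)+1)^{k-1}$ distinct images of the constraint vertices, one gets $\card{\mu\cap\pants}=m\geq N$ directly, without ever needing $v_L$ or $v_R$. Your connected-subgraph count (which yields only $\geq V-1$ edges from $V$ vertices) is strictly weaker and forces you to hunt for the extra vertices; the closed-loop observation is both simpler and avoids the gap. If you prefer to keep your argument, note that one of $v_L,v_R$ is always visited, and one extra vertex already suffices to push your count up to $N$.
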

\begin{proof}
After fixing a hyperbolic metric on $S$, modifying each curve by an isotopy to make it geodesic, and adjusting the basepoint accordingly, we may assume that $\gamma$ and $\alpha$ satisfy Assumptions~\ref{assm:Assumptions} and that each simple closed curve $\seam\in\pants$ is geodesic. We then have the corresponding tree $\tree = \tree_\pants$ described in \S\ref{sec:divergence} and may apply the theory developed in \S\S\ref{sec:divergence}--\ref{sec:pointsThatPush}. It follows that all of the dynamic marked points described in Proposition~\ref{prop:thePaths} constrain $\liftclass{5k}{\marked}$. Let $\mu\subset S$ be any simple closed curve that is isotopic to $\varphi_\gamma^{5k}(\baseloop)$ in $(S,p)$, and let $\tilde{\mu}\subset (\hyp,\pi\inv(p))$ be the lift of $\mu$ whose endpoints on $\partial\hyp$ agree with those of $\baselift$. We assume that $\mu$ is transverse to the curves in $\pants$. Since $\tilde{\mu}$ is in the isotopy class $\liftclass{5k}{\marked}$, 
its projection $\proj(\tilde{\mu})$ to $\tree$ is an edge path that necessarily visits all of the vertices described by Proposition~\ref{prop:distinctness}. Since these project to $(\intnum(\gamma)+1)^{k-1}$ distinct vertices in the quotient graph $\Gstab\backslash \tree$, it is apparent that $\proj(\tilde{\mu})$ projects to a closed loop in $\Gstab\backslash \tree$ that crosses at least $(\intnum(\gamma)+1)^{k-1}$ edges. Observation~\ref{obs:countingIntersections} now implies that
\[\card{\mu\cap \pants} \geq (\intnum(\gamma)+1)^{k-1}.\]
Since $\mu$ is an arbitrary representative in the isotopy class of $\varphi_\gamma^{5k}(\baseloop)$, this proves the claim.
\end{proof}

We remark that our proof of Corollary~\ref{cor:intersectionNumbers} is essentially an elaboration of the proof of Kra's theorem given by Farb and Margalit in \cite[Theorem 14.6]{FarbMargalit}. Indeed, their technique of point-pushing in the universal cover provided both the inspiration and the foundation for our above analysis of the intersection numbers $\intnum(\varphi_\gamma^k(\baseloop),\pants)$. In light of the connection between dilatation and intersection numbers (Theorem~\ref{thm:pAIntersectionNumber}), this analysis of $\intnum(\varphi_\gamma^k(\baseloop),\pants)$ easily implies a lower bound on the dilatation of $\varphi_\gamma$. 

\begin{mythm}[The lower bound]\label{thm:lowerBound}
Let $S = S_{g,n}$ be a surface satisfying $3g + n > 3$, and let $\mu\colon[0,1]\to S$ be a closed filling curve on $S$ based at $p =\mu(0)$. Then the dilatation $\pushdil{\mu}$ of the mapping class $\push(\mu)\in\Mod(S,p)$ is bounded below as follows:
\begin{itemize}
\item[i)] If $S = S_{0,4}$ or $S_{1,2}$ and $\mu$ is the square of a primitive element in $\pi_1(S)$, then $\pushdil{\mu} \geq \sqrt[5]{\intnum(\mu)}$.
\item[ii)]  If $S = S_{1,1}$ and $\mu$ is the second, third, or fourth power of a primitive element, then $ \pushdil{\mu} \geq \sqrt[5]{(\intnum(\mu)+1)/2}$.
\item[iii)] In all other cases, $\pushdil{\mu} \geq \sqrt[5]{\intnum(\mu)+1}$.
\end{itemize}
\end{mythm}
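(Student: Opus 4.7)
The plan is to reduce to the primitive case, where Corollary~\ref{cor:intersectionNumbers} combined with Thurston's Theorem~\ref{thm:pAIntersectionNumber} yields a clean bound, and then to transfer the estimate to non-primitive $\mu$ via the elementary relation $\pushdil{\nu^m} = \pushdil{\nu}^m$. The three cases of the theorem arise from numerical bookkeeping that compares $\intnum(\nu^m)$ to $(\intnum(\nu)+1)^m$.

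Write $\mu = \nu^m$ with $\nu \in \pi_1(S,p)$ primitive and $m \geq 1$. Because $\push$ is an injective (anti-)homomorphism and dilatation is invariant under inversion, $\pushdil{\mu} = \pushdil{\nu}^m$. The primitive root $\nu$ fills $S$ (its image coincides with that of $\nu^m$), so by Kra's Theorem~\ref{thm:Kra}, $\push(\nu)$ is pseudo-Anosov. I then fix any essential simple closed curve $\alpha \subset S\setminus\{p\}$ and extend it to a pants decomposition $\pants \ni \alpha$; after a harmless basepoint change I may assume $p \notin \alpha \cup \pants$. Corollary~\ref{cor:intersectionNumbers} then gives
\[
(\intnum(\nu)+1)^{k-1} \leq \intnum(\varphi_\nu^{5k}(\alpha),\pants) = \sum_i \intnum(\varphi_\nu^{5k}(\alpha), \seam),
\]
while Theorem~\ref{thm:pAIntersectionNumber} applied to each pair $(\alpha, \seam)$ yields a constant $C > 0$ for which $\intnum(\varphi_\nu^{5k}(\alpha),\pants) \leq C \pushdil{\nu}^{5k}$ for all sufficiently large $k$. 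Taking $k$-th roots and letting $k \to \infty$ produces $\intnum(\nu) + 1 \leq \pushdil{\nu}^5$, and hence
\[
\pushdil{\mu}^5 = \pushdil{\nu}^{5m} \geq (\intnum(\nu) + 1)^m.
\]

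When $m = 1$ this is exactly case~(iii). For $m \geq 2$ it remains to compare $(\intnum(\nu)+1)^m$ with $\intnum(\nu^m) = \intnum(\mu)$. I would construct an explicit immersed representative of $\nu^m$ from $m$ parallel push-offs of the geodesic representative of $\nu$, joined by a small crossover, yielding an upper bound of the form $\intnum(\nu^m) \leq m^2 \intnum(\nu) + \binom{m}{2}$. Combined with the Euler-characteristic lower bound $\intnum(\nu) \geq 2g+n-2$ from~\eqref{eqn:EulerCharacteristicBound}, an elementary calculation then shows $(\intnum(\nu)+1)^m \geq \intnum(\nu^m) + 1$ outside of the listed exceptional pairs $(S,m)$, again yielding case~(iii). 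In the exceptional regimes, one verifies by hand that the weaker bounds $(\intnum(\nu)+1)^2 \geq \intnum(\nu^2)$ and $2(\intnum(\nu)+1)^m \geq \intnum(\nu^m) + 1$ hold, completing cases~(i) and~(ii).

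The main obstacle is not the pseudo-Anosov estimate, which is packaged by Corollary~\ref{cor:intersectionNumbers}, but the numerical bookkeeping in the last step. The exceptional surfaces $S_{0,4}$, $S_{1,2}$, $S_{1,1}$ are precisely those for which filling primitive curves can have $\intnum(\nu) \in \{1, 2\}$, the regime in which the $\binom{m}{2}$ correction in $\intnum(\nu^m)$ is large enough relative to $m^2 \intnum(\nu)$ to swamp the gap $(\intnum(\nu)+1)^m - m^2\intnum(\nu)$; managing this correction case-by-case is the only genuine work left after establishing the primitive estimate.
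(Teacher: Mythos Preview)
Your approach is identical in structure to the paper's: reduce to the primitive root via $\pushdil{\nu^m}=\pushdil{\nu}^m$, extract $\pushdil{\nu}\geq(\intnum(\nu)+1)^{1/5}$ from Corollary~\ref{cor:intersectionNumbers} and Theorem~\ref{thm:pAIntersectionNumber}, and then compare $(\intnum(\nu)+1)^m$ with $\intnum(\nu^m)$ via an explicit representative of $\nu^m$ and the Euler-characteristic bound. Two small numerical points need tightening, however.

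First, your bound $\intnum(\nu^m)\leq m^2\intnum(\nu)+\binom{m}{2}$ is slightly too weak. The paper obtains the sharper $\intnum(\nu^m)\leq m^2\intnum(\nu)+(m-1)$ by joining the $m$ offset copies with a single cyclic shift, which costs only $m-1$ crossings (the number of inversions in an $m$-cycle) rather than the $\binom{m}{2}$ of a full half-twist. The difference matters in exactly one place: on $S_{1,1}$ with $m=5$ and $\intnum(\nu)=1$ (which does occur---the figure-eight fills $S_{1,1}$), your inequality for case~(iii) would require $2^5\geq 25+\binom{5}{2}+1=36$, which fails, whereas the paper's requires $2^5\geq 25+5=30$, which holds. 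Second, you invoke $\intnum(\nu)\geq 2g+n-2$ from~\eqref{eqn:EulerCharacteristicBound} and then correctly list only $S_{0,4},S_{1,2},S_{1,1}$ as exceptional; but to justify excluding $S_{2,0}$ (where $2g+n-2=2$) you need the observation, made explicitly in the paper, that inequality~\eqref{eqn:intnumVsEuler} is \emph{strict} for closed surfaces because the filling complement must contain at least one open disk. Both fixes are minor and the rest of your argument goes through unchanged.
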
 
\begin{proof}
Decompose $\mu$ as a power $\mu =\gamma^m$, $m\geq 1$, of some primitive filling curve $\gamma\in \pi_1(S,p)$. Let $\varphi_\gamma$ denote a representative homeomorphism for $\push(\gamma)$ and let $\baseloop\subset S\setminus\{p\}$ be any essential simple closed curve on $S$. After choosing a pants decomposition $\pants$ as in Corollary~\ref{cor:intersectionNumbers}, it follows that the simple closed curves $\varphi_\gamma^k(\baseloop)\subset(S,p)$ satisfy $\intnum(\varphi_\gamma^{5k}(\baseloop),\pants)\geq (\intnum(\gamma)+1)^{k-1}$ for all $k\geq 1$. Upon manipulating this inequality, we find that
\[
\left(\frac{(\intnum(\gamma)+1)^{\nicefrac{1}{5}}}{\lambda_\gamma}\right)^{5k}
\leq (\intnum(\gamma)+1)\frac{\intnum(\varphi_\gamma^{5k}(\baseloop),\pants)}{\lambda_\gamma^{5k}}
= (\intnum(\gamma)+1)\sum_{\seam\in\pants}\frac{\intnum(\varphi_\gamma^{5k}(\baseloop),\seam)}{\lambda_\gamma^{5k}}.
\]
Theorem~\ref{thm:pAIntersectionNumber} implies that the rightmost expression has a finite limit. Therefore the leftmost expression remains bounded as $k$ tends to infinity, which is only possible if $\pushdil{\gamma}\geq (\intnum(\gamma)+1)^{\nicefrac{1}{5}}$. This proves the theorem when $\mu = \gamma$ is a primitive element of $\pi_1(S)$.

It remains to bound the dilatation $\pushdil{\mu} = \pushdil{\gamma}^m$ in the case that $m\geq 2$. Isotope $\gamma$ to attain the minimum intersection number $\intnum(\gamma) > 0$ in Definition~\ref{defn:selfIntNum}. By building a single closed curve out of $m$ offset copies of this loop, we obtain a representative for $\mu=\gamma^m$ that has $m^2\intnum(\gamma)+(m-1)$ transverse self intersections. This gives an upper bound
\begin{equation}\label{eqn:muIntBound}
\intnum(\mu)+1 \leq m^2\intnum(\gamma)+m
\end{equation}
on the self-intersection number of $\mu$. To relate this to the dilatation of $\push(\mu)$, we need to consider several inequalities involving the numbers $\intnum(\gamma)$ and $m$. Firstly, the inequality
\begin{equation}\label{eqn:numberInequality}
m^2k +m \leq (k+1)^m
\end{equation}
holds for all integers $m\geq 2$ and $k\geq 3$. Secondly, thinking of $\gamma\subset S$ as a four-valent graph, we have that
\begin{equation}\label{eqn:intnumVsEuler}
-\intnum(\gamma) = \chi(\gamma) \leq \chi(S_{g,n}) = 2-2g-n,
\end{equation}
where this inequality is strict in the case that $S$ is closed. The following three cases now account for all surfaces $S_{g,n}$ satisfying $3g+n > 3$.
\bigskip

\noindent
\textbf{Case 1:} \emph{$\chi(S)\leq -3$ or $S= S_{2,0}$.} In this case \eqref{eqn:intnumVsEuler} implies that $\intnum(\gamma)\geq 3$ (note that $S_{2,0}$ is closed). Combining \eqref{eqn:muIntBound} and \eqref{eqn:numberInequality} then yields the desired inequality
\begin{equation*}
\pushdil{\mu} = \pushdil{\gamma}^m \geq (\intnum(\gamma)+1)^{\nicefrac{m}{5}} \geq \left(m^2\intnum(\gamma) + m\right)^{\nicefrac{1}{5}} \geq \sqrt[5]{\intnum(\mu)+1}.
\end{equation*}

\noindent
\textbf{Case 2:} \emph{$S = S_{0,4}$ or $S= S_{1,2}$.} In this case \eqref{eqn:intnumVsEuler} only ensures that $\intnum(\gamma)\geq 2$. When $k=2$, the inequality \eqref{eqn:numberInequality} remains valid provided that $m\geq 3$. In such cases, we obtain the bound $\pushdil{\mu}\geq (\intnum(\mu)+1)^{\nicefrac{1}{5}}$ as above. When $m=2$, the modified inequality $2^2k +2 - 1 \leq (k+1)^2$ holds provided $k\geq 2$. Combining this with \eqref{eqn:muIntBound}, we find that 
\begin{equation*}
\pushdil{\mu} = \pushdil{\gamma}^2 \geq (\intnum(\gamma)+1)^{\nicefrac{2}{5}}
\geq (2^2\intnum(\gamma) + 2 -1)^{\nicefrac{1}{5}}  \geq \sqrt[5]{\intnum(\mu)}
\end{equation*}
in the case that $\mu$ is the square of a primitive element of either $\pi_1(S_{0,4})$ or $\pi_1(S_{1,2})$.
\bigskip

\noindent
\textbf{Case 3:} \emph{$S = S_{1,1}$.} The Euler characteristic now guarantees that $\intnum(\gamma) \geq -\chi(S_{1,1}) = 1$. When $k=1$, \eqref{eqn:numberInequality} still holds provided that $m\geq 5$; therefore we may conclude the general bound $\pushdil{\mu}\geq (\intnum(\mu)+1)^{\nicefrac{1}{5}}$ in these cases. For $2 \leq m \leq 4$, we instead have the inequality $(m^2k+m)/2 \leq (k+1)^m$, which holds for all $k\geq 1$. Together with \eqref{eqn:muIntBound}, this shows that 
\begin{equation*}
\pushdil{\mu} = \pushdil{\gamma}^m\geq (\intnum(\gamma)+1)^{\nicefrac{m}{5}} \geq \left(\frac{m^2\intnum(\gamma) + m}{2}\right)^{\nicefrac{1}{5}}\geq \sqrt[5]{\frac{\intnum(\mu)+1}{2}}
\end{equation*}
in the case that $\mu$ is the second, third, or fourth power of a primitive element in $\pi_1(S_{1,1})$.
\end{proof}

\begin{rem}
The above complications due to non-primitive loops are unfortunate but unavoidable. If $\mu$ is any closed loop that realizes the minimum $\intnum(\mu)$ and has transverse self-intersections, then $\intnum(\mu)$ is exactly equal to the number of distinct lifts $\tilde{\mu}$ that a single path lift $\lpath{x}{\mu}$ intersects. However if $\mu$ is the $k^{\text{th}}$ power of a primitive element, then the lifts of $\mu$ are grouped into families of $k$ ``parallel'' lifts with the same endpoints at $\partial\hyp$. Thus, following along a path $\lpath{x}{\mu}$ and turning onto various other lifts $\tilde{\mu}$ does not lead to exponential branching out in $\tree$, as many of these lifts now fellow-travel in $\tree$ forever.
\end{rem}

\section{An invariant pretrack}\label{sec:trainTracks}

Train tracks are an invaluable tool in the study of pseudo-Anosov homeomorphisms, and they will play an essential role in in our investigation. While it is a nontrivial matter to find an invariant train track for an arbitrary pseudo-Anosov map (there is, however, an algorithm due to Bestvina and Handel \cite{BestvinaHandel95}), \S\ref{sec:invariantTracks} describes a simple method for constructing invariant pretracks for pseudo-Anosov elements of the the point-pushing subgroup. We will use this construction to establish the upper bounds in Theorems~\ref{thm:generalBounds}, \ref{thm:leastDilatationFixedGenus}, and \ref{thm:leastDilatationVaryN}. We begin our discussion by recalling the relevant train track theory, which is developed more thoroughly in \cite{PennerHarer92, MosherMonograph, Penner91, PapadopoulosPenner87}.

\subsection{Preliminary train track theory}\label{sec:trainTrackPrelims}

A \emph{pretrack} on $S = S_{g,n}$ is a nonempty, smooth, closed $1$-complex $\tau \subset S$, whose edges are called \emph{branches} and whose vertices are called $\emph{switches}$, with the property that all branches incident on a given switch $v\in \tau$ share a common tangent line $L_v\leq T_vS$ at $v$; in this way, the branches incident at $v$ are divided into two sides depending on whether their tangent vectors at $v$ (oriented into the branch) are parallel or antiparallel. The local picture around a switch is shown in Figure~\ref{fig:switch}. The closure $C$ of a component of $S\setminus \tau$ is naturally a surface with some number $k\geq 0$ of cusps on its boundary, and we define the \emph{Euler index} of such a surface to be $\chi(C)-\nicefrac{k}{2}$. For instance, a \emph{$k$-gon}, that is, a topological disk with $k$ cusps on its boundary, has Euler index $\frac{2-k}{2}$. A $k$-gon with $k\leq 3$ will usually be referred to as a nullgon, monogon, bigon, or trigon.

\begin{figure}
\begin{minipage}[b]{0.48\linewidth}
\centering
\labellist\small\hair 2pt
\pinlabel {$b_1$} [tr] at 49 101
\pinlabel {$b_2$} [br] <0pt,1pt> at 43 46
\pinlabel {$b_3$} [br] <0pt,1pt> at 167 72
\endlabellist
\includegraphics[scale=1.0]{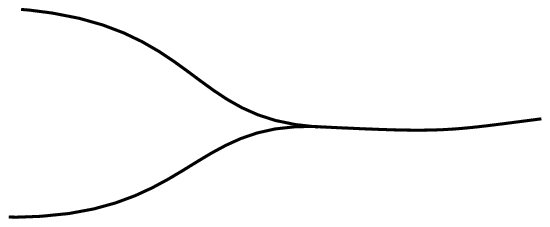}
\caption{A switch.}
\label{fig:switch}
\end{minipage}
\begin{minipage}[b]{0.48\linewidth}
\centering
\includegraphics[scale=1.0]{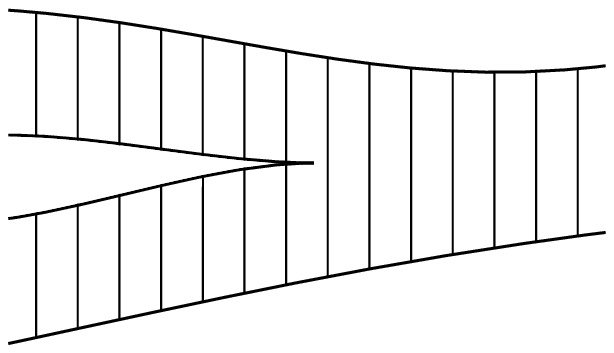}
\caption{The local tie neighborhood.}
\label{fig:tieNeighborhood}
\end{minipage}
\end{figure}

A \emph{train track} on $S$ is simply a pretrack whose complementary components all have negative Euler index. This amounts to ruling out complementary nullgons, monogons, bigons, smooth annuli, and once-punctured nullgons. In the context of a marked surface $(S,p)$, the marked point $p$ counts as a puncture and will be treated as such. We will use the term ``track'' to refer to both pretracks and train tracks. 

A \emph{weight function} on a track $\tau$ is an assignment of a nonnegative real number to each branch of $\tau$ in such a way that the net weights incident on either side of each switch agree. For instance, if the weight $w_i$ is assigned to branch $b_i$ in Figure~\ref{fig:switch}, then these weights must satisfy the equation $w_1 + w_2 = w_3$. The set of weight functions on $\tau$ is denoted by $\weights{\tau}$; it is a convex cone in $\R^B$, where $B$ is the set of branches in $\tau$. 

Let $\MFS(S)$ denote the space of equivalence classes of measured foliations on $S$; see \cite{FLP79} for the theory of measured foliations. Because of the train track condition on complementary components, there is natural injection $\rho\colon\weights{\tau}\to \MFS(S)$ from the set of weights on a train track $\tau$ onto a convex cone $\cone{\tau}\subseteq \MFS(S)$ consisting of those measured foliations which are ``carried'' by $\tau$ \cite[pp.~360--361]{PapadopoulosPenner87}. As every measured foliation is carried by some train track, the cones $\cone{\tau}$ are sometimes regarded as parameterized coordinate patches in $\MFS(S)$. A pretrack that fails to be a train track only due to the existence of complementary bigons will be called a \emph{bigon track}. The natural function $\rho\colon\weights{\tau}\to\cone{\tau}$ still makes sense for a bigon track, but it may fail to be injective \cite[p.~183]{Penner88}.

Associated to $\tau$ is a \emph{local tie neighborhood} $N\subset S$; this is a small neighborhood of $\tau$ equipped with a retraction $N\to\tau$ whose fibers form a foliation of $N$ by \emph{ties} that are transverse to $\tau$, as in Figure~\ref{fig:tieNeighborhood}. If $\sigma$ is another track on $S$, then $\tau$ \emph{carries} $\sigma$, denoted $\sigma \prec \tau$, if $\sigma$ may be smoothly isotoped into $N$ while remaining transverse to the ties. Such an isotopy $\Phi_t\colon S\to S$ with $\Phi_1(\sigma)\subset N$ is called a \emph{supporting map for the carrying} $\sigma\prec\tau$; it defines a corresponding \emph{incidence matrix} $M = \left(M_{ij}\right)$ as follows: For each branch $b_i$ of $\tau$ choose a distinguished fiber $x_i\subset N$ over an interior point of $b_i$. Then, for each branch $c_j$ of $\sigma$, set $M_{ij} = \abs{\Phi_1\inv(x_i)\cap c_j}$ to be the number of times $\Phi_1(c_j)$ crosses the distinguished tie $x_i$. Although the incidence matrix $M$ depends on the supporting map $\Phi_t$, the matrix nevertheless induces a canonical linear transformation $M\colon\weights{\sigma}\to\weights{\tau}$ from the set of weight functions on $\sigma$ to the set of weight functions on $\tau$. In the case that $\sigma$ and $\tau$ are train tracks, the carrying $\sigma\prec \tau$ implies that $\cone{\sigma}\subseteq \cone{\tau}$, and any incidence matrix $M$ describes the corresponding transition function between these parameterizations of $\cone{\sigma}$ \cite[p.~362]{PapadopoulosPenner87}.

Because of the switch conditions, a weight function $\mu\in \weights{\tau}$ may be specified by its values on a proper subset $B'\subset B$ of the branches of $\tau$; for example, in the situation of Figure~\ref{fig:switch}, the weight $\mu(b_3)$ is determined by the values of $\mu(b_1)$ and $\mu(b_2)$. This means that the natural projection $\R^{B}\to \R^{B'}$ is injective on $\weights{\tau}\subset\R^{B}$. If $\weights{\tau}'\subset \R^{B'}$ denotes image of $\weights{\tau}$, then inverting this projection gives a linear bijection $A_\tau\colon \weights{\tau}'\to \weights{\tau}$. 
In the case of a carrying $\sigma\prec \tau$ with incidence matrix $M$, we can make use of these bijections and instead consider the $\card{B_\tau'}\times\card{B_\sigma'}$ matrix $M' = A_\tau\inv M A_\sigma$. This smaller matrix gives a linear transformation $M'\colon \weights{\sigma}'\to \weights{\tau}'$ that contains all of the information of the carrying. To ease calculations, we will work with incidence matrices of this smaller form.

In addition to isotopy, we will make use of three elementary moves on a track $\tau$ which produce a new track $\tau'$ that carries $\tau$. The moves are illustrated in Figure~\ref{fig:moves}, and they consist of: sliding one switch past another, collapsing a bigon, or pinching branches together in the manner illustrated. For each move there is a natural choice of supporting map for the carrying $\tau \prec \tau'$ whose corresponding incidence matrix has the obvious effect on weights.

\begin{figure}
\centering
\labellist\small\hair 2pt
\pinlabel {slide} [B] <0pt,6pt> at 100 158
\pinlabel {pinch} [B] <0pt,6pt> at 332 110
\pinlabel {collapse} [B] <0pt, 6pt> at 100 62
\endlabellist
\includegraphics[scale=1.0]{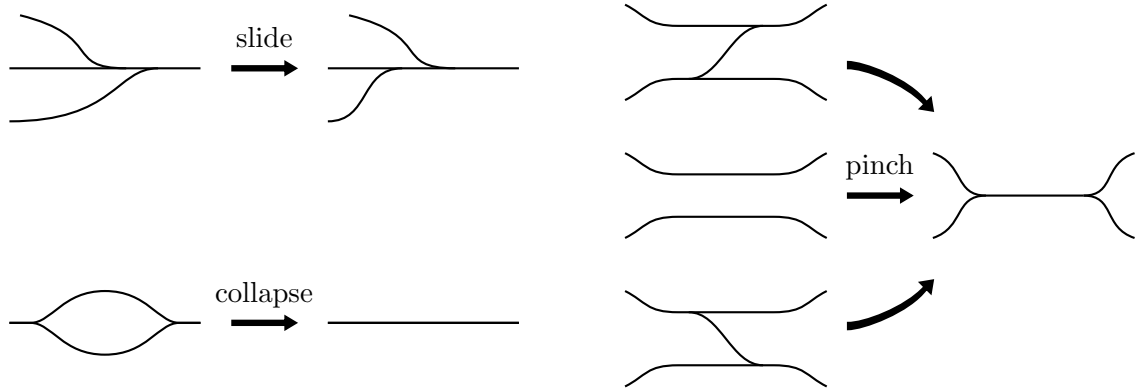}
\caption{The elementary carrying moves.}
\label{fig:moves}
\end{figure}

If $f\in \Mod(S)$ is a pseudo-Anosov mapping class and $\tau$ is a track on $S$, then the image $f(\tau)$ is well-defined up to isotopy. We say that $\tau$ is an \emph{invariant track} for $f$ if $f(\tau)\prec \tau$. If, additionally, $\tau$ is a train track or bigon track, then any incidence matrix $M$ for the carrying $f(\tau)\prec \tau$ describes the induced map $f_*\colon\MFS(S)\to \MFS(S)$ in the coordinate chart $\rho\colon\weights{\tau}\to\cone{\tau}$, that is, we have $f_*(\rho(\mu)) = \rho(M\mu)$ for any weight function $\mu\in \weights{\tau}$ \cite[p.~444]{Penner91}. The projective class of the unstable measured foliation $\mathscr{F}_+$ of $f$ is an attracting fixed point for the action of $f_*$ on the space $\mathbb{P}(\MFS(S))$ of projective classes of measured foliations. Since $f_*$ preserves $\cone{\tau}$, it follows that the projectivized coordinate chart $\mathbb{P}(\cone{\tau})$ contains sequences that converge to the projective class of $\mathscr{F}_+$. Since $\cone{\tau}$ is closed, this implies that $\mathscr{F}_+$ is contained in $\cone{\tau}$ and necessarily corresponds to an eigenvector of $M$.  In particular, the dilatation of $f$ is an eigenvalue of the incidence matrix $M$.

A square integer matrix $A$ is \emph{Perron--Frobenius} if it has nonnegative entries and some power $A^k$ has strictly positive entries (such matrices are also known as ``primitive irreducible''). In this case, the eigenvalue of $A$ with maximum modulus is positive real and its corresponding eigenvector has strictly positive entries \cite[Ch XIII \S2 Theorem 2]{Gantmacher59v2}. It follows that the modulus of any eigenvalue of a Perron--Frobenius matrix is bounded above by the largest row-sum of the matrix. Applying this classical result to the case of a Perron--Frobenius incidence matrix $M$, we may conclude the following key lemma.

\begin{lem}\label{lem:PerronFrobenius}
Let $f$ be a pseudo-Anosov mapping class, and suppose that $\tau$ is an invariant train track or bigon track for $f$. If $M$ is a Perron--Frobenius incidence matrix for the carrying $f(\tau)\prec \tau$, then the dilatation of $f$ is bounded above by the largest row-sum of $M$.
\end{lem}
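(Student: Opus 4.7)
The plan is to combine two ingredients already spelled out in the preceding exposition: that $\lambda_f$ appears as an eigenvalue of $M$, and that the spectral radius of a nonnegative matrix is dominated by its maximum row-sum. The proof then reduces to assembling standard Perron--Frobenius facts.

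First, I would verify that $\lambda_f$ is an eigenvalue of $M$. The unstable measured foliation $\mathscr{F}_+$ of $f$ lies in $\cone{\tau}$, so there is some weight $\mu_+\in \weights{\tau}$ with $\rho(\mu_+)=\mathscr{F}_+$. Using the relation $f_*\rho(\mu)=\rho(M\mu)$ and the fact that $f_*\mathscr{F}_+ = \lambda_f\mathscr{F}_+$, we obtain $\rho(M\mu_+)=\rho(\lambda_f\mu_+)$. When $\tau$ is a genuine train track, $\rho$ is injective, so $M\mu_+=\lambda_f\mu_+$ directly. When $\tau$ is a bigon track, the same conclusion (that $\lambda_f$ is a root of the characteristic polynomial of $M$) follows by passing to the projectivization: $M$ preserves the convex cone $\weights{\tau}$, and $[\mu_+]$ is fixed by the induced projective action with multiplier $\lambda_f$, so some preimage in $\weights{\tau}$ of $[\mathscr{F}_+]$ is a genuine eigenvector with eigenvalue $\lambda_f$. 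In either case, $\lambda_f\in\spec(M)$.

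Second, I would invoke the classical Perron--Frobenius bound. Since $M$ is Perron--Frobenius, there is a strictly positive eigenvector $v$ whose eigenvalue $\lambda_{\max}$ is positive real, equals the spectral radius of $M$, and satisfies $|\nu|\leq \lambda_{\max}$ for every eigenvalue $\nu$ of $M$. To bound $\lambda_{\max}$ by the largest row-sum $R=\max_i\sum_jM_{ij}$, pick an index $i_0$ maximizing $v_{i_0}$; then
\[
\lambda_{\max}\, v_{i_0} \;=\; \sum_j M_{i_0 j}\, v_j \;\leq\; v_{i_0}\sum_j M_{i_0 j} \;\leq\; R\, v_{i_0},
\]
and dividing by $v_{i_0}>0$ yields $\lambda_{\max}\leq R$. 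Combining these facts gives $\lambda_f = |\lambda_f|\leq \lambda_{\max}\leq R$, which is the desired bound.

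The only real subtlety is the eigenvalue identification in the bigon-track case, since $\rho$ may fail to be injective there; but this is handled by the fact that $M$ still acts linearly on $\weights{\tau}$ and preserves this cone, so a Perron-type argument (or equivalently the preserved projective fixed point) produces a genuine $\lambda_f$-eigenvector in $\weights{\tau}$. Once this is in hand, the remainder is a one-line application of the standard row-sum estimate for nonnegative matrices, which is exactly the tool flagged in the paragraph preceding the lemma.
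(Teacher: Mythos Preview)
Your proposal is correct and follows essentially the same approach as the paper: the lemma is stated there as a direct consequence of the two preceding paragraphs, which establish first that $\lambda_f$ is an eigenvalue of $M$ (via the attracting fixed point $\mathscr{F}_+\in\cone{\tau}$) and second that every eigenvalue of a Perron--Frobenius matrix has modulus at most the largest row-sum. Your write-up simply expands on those two paragraphs, supplying the one-line computation for the row-sum bound and a remark on the bigon-track case, so there is no substantive difference in method.
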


\subsection{Invariant tracks for point-pushing homeomorphisms}\label{sec:invariantTracks}

In this subsection we describe a simple procedure for producing a pretrack from a curve; this construction will be used in \S\ref{sec:biggest} and \S\ref{sec:upperbounds} to analyze explicit examples and prove the upper bounds in Theorems~\ref{thm:generalBounds}, \ref{thm:leastDilatationFixedGenus} and \ref{thm:leastDilatationVaryN}. Let $\gamma\colon[0,1]\to S$ be a smooth curve on $S$ with $\gamma(0)=\gamma(1)=p$. We say that such a loop is \emph{generic} if it is simple except for finitely many transverse double-intersection points in the interior of $\gamma$ (i.e., not at $p$). If $\gamma$ is generic and $q\in S$ is a self-intersection point of $\gamma$, we let
$\itime{q}{1}$ and $\itime{q}{2}$ denote the two preimages of $q$ under $\gamma$, that is, we have $\gamma\inv(q) = \{\itime{q}{1},\itime{q}{2}\}$ with $0  < \itime{q}{1} < \itime{q}{2}<  1$.

Notice that a generic loop $\gamma\subseteq S$ is naturally a smooth, closed $1$-complex that only fails to be a pretrack because its intersection points are transverse rather than tangential. Thus we can build a pretrack that is intrinsically related to $\gamma$ by simply adjusting this $1$-complex around its intersection points to ensure that it satisfies the tangential condition at switches.

Locally around an intersection point $q$, the curve $\gamma$ cuts $S$ into four quadrants which have corners incident at $q$ and boundaries given by arcs of $\gamma$. The quadrant whose two boundary edges agree with the tangent vectors $\gamma'(\itime{q}{1})$ and $\gamma'(\itime{q}{2})$ is the \emph{outbound quadrant}, and its diagonal opposite is the \emph{inbound quadrant}; the situation is depicted in Figure \ref{fig:transverseIntersection}.

\begin{figure}
\centering
\subfigure[A self-intersection point of $\gamma$.]
{
\labellist\small\hair 2pt
\pinlabel {\shortstack[c]{ {\scriptsize \it inbound} \\ {\scriptsize \it quadrant}}} [tr] at 80 43
\pinlabel {\shortstack[c]{ {\scriptsize \it outbound}\\{\scriptsize \it quadrant}}} [bl] at 111 90
\pinlabel {$\gamma$} [br] <0pt,-1pt> at 47 58
\pinlabel {$\gamma$} [tl] <2pt,0pt> at 104 17
\pinlabel {$q$} [tr] <1pt,-2pt> at 102 71
\pinlabel* {$\gamma'(\itime{q}{1})$} [tr] <3pt,-3pt> at 174 78
\pinlabel* {$\gamma'(\itime{q}{2})$} [tl] <2pt,2pt> at 99 130
\endlabellist
\includegraphics[scale=1.0]{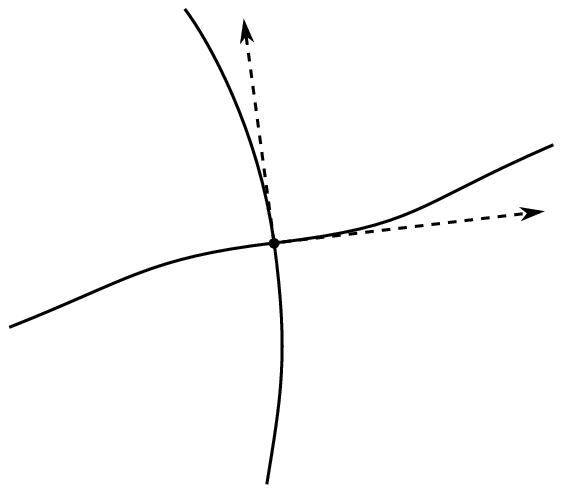}
\label{fig:transverseIntersection}
}
\subfigure[The local pretrack $\pretrack{\gamma}$ around $q$.] 
{
\labellist\small\hair 2pt
\pinlabel {\shortstack[c]{ {\scriptsize \it inbound} \\ {\scriptsize \it quadrant}}} [tr] at 80 43
\pinlabel {\shortstack[c]{ {\scriptsize \it outbound}\\{\scriptsize \it quadrant}}} [bl] at 117 96
\pinlabel {$q$} [tr] at 102 71
\pinlabel {$a_1^-$} [br] <0pt,-2pt> at 76 71
\pinlabel {$a_1^+$} [tl] <-2pt,1pt> at 139 79
\pinlabel {$a_2^-$} [l] <1pt,0pt> at 106 33
\pinlabel {$a_2^+$} [bl] at 96 108
\endlabellist
\includegraphics[scale=1.0]{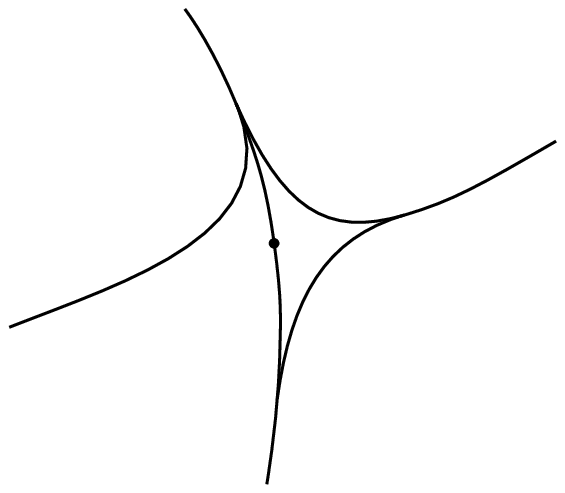}
\label{fig:localPretrack}
}
\caption{Constructing a pretrack $\pretrack{\gamma}$ from a generic curve $\gamma$.}
\label{fig:pretrackConstruction}
\end{figure}

We now describe how to adjust the $1$-complex $\gamma$ around $q$ to obtain a pretrack. The path $\gamma$ crosses $q$ twice, first at time $\itime{q}{1}$ and then at $\itime{q}{2}$. For a sufficiently small $\eps > 0$, we consider the four nearby points $a_{i}^{\pm} = \gamma(\itime{q}{i}\pm\eps)$ on the edges of $\gamma$ incident at $q$. Add three short, curved segments connecting the three pairs of points $(a_1^-,a_2^+)$, $(a_1^+,a_2^-)$, and $(a_1^+,a_2^+)$; this has the effect of cutting off the corner of every quadrant except for the inbound quadrant. Removing the segment of $\gamma$ between $a_1^-$ and $a_1^+$, we obtain the local pretrack illustrated in Figure \ref{fig:localPretrack}; it has a trigon located at $q$, the inbound quadrant has a cusp, and the other quadrants have smooth corners. Notice that this $1$-complex is still tangent to $\gamma'(\itime{q}{2})$ but is no longer tangent to $\gamma'(\itime{q}{1})$. 

After making these adjustments at each self-intersection point, we obtain a pretrack on the surface $S$. The last step is to create a track on $S\setminus\{p\}$. Consider the branch that passes through $p$ and split it (e.g., at $\gamma(1-\eps)$ and $\gamma(0+\eps)$) into a bigon around $p$. Finally, as illustrated in Figure~\ref{fig:basedPretrack}, add a smooth arc across the front of the bigon separating it into a trigon and a monogon containing $p$. We refer to this section of track that surrounds $p$, i.e., the section shown in Figure~\ref{fig:basedPretrack}, as the \emph{eye} of the track.

\begin{figure}
\centering
\labellist\small\hair 2pt
\pinlabel {$p$} [tr] at 93 75
\pinlabel {$\gamma'(0)$} [b] <0pt,-1pt> at 110 79
\endlabellist
\includegraphics[scale=1.0]{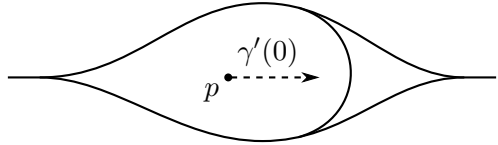}
\caption{Adjusting for the basepoint: the eye of the pretrack $\pretrack{\gamma}$.}
\label{fig:basedPretrack}
\end{figure}

\begin{defn}[Induced pretrack]\label{defn:inducedPretrack}
Let $\gamma\colon [0,1]\to S$ be a generic closed curve on the surface $S$. The corresponding pretrack, as constructed above, is called the \emph{pretrack induced by $\gamma$} and will be denoted by $\pretrack{\gamma}$. See Figures~\ref{fig:examplePiece} and \ref{fig:exampleTrack} for examples of such pretracks.
\end{defn}

\begin{prop}[Invariance of the induced pretrack] \label{prop:invariantPretrack}
Let $\gamma\colon[0,1]\to S$ be a generic loop representing a nontrivial element of the fundamental group $\pi_1(S,p)$. Then the induced pretrack
$\pretrack{\gamma}$ is invariant under the mapping class $\push(\gamma) = [\varphi_\gamma]$.
\end{prop}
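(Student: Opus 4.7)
The plan is to exhibit an explicit supporting isotopy showing that $\varphi_\gamma(\pretrack{\gamma})$ can be slid into a tie neighborhood $N$ of $\pretrack{\gamma}$ transverse to the ties. The strategy is entirely local: choose the isotopy $F_t$ defining $\varphi_\gamma$ to be supported in a small tubular neighborhood $U$ of $\gamma$. Then $\varphi_\gamma$ is the identity outside $U$, so the branches of $\pretrack{\gamma}$ that lie outside $U$ are trivially carried by themselves. Everything interesting happens inside $U$, and the verification reduces to a local analysis at three types of regions: generic points of $\gamma$, self-intersection points $q$, and the basepoint $p$.

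At a generic point of $\gamma$, the pretrack is just a single smooth branch running along $\gamma$, and the push translates it forward by a small amount along $\gamma$; the image clearly sits in the tie neighborhood of the original branch, and tangent directions agree to leading order. At the basepoint $p$, the eye structure of $\pretrack{\gamma}$ leaves $p$ tangent to $\gamma'(0)$; when the isotopy returns the eye to $p$ at time $t=1$, it returns tangent to $\gamma'(1) = \gamma'(0)$, so the returning eye fits smoothly back into the original eye region of $\pretrack{\gamma}$, with the monogon absorbing any local twisting accumulated along the push.

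The heart of the argument is the verification at each self-intersection point $q$. Here the local picture of $\pretrack{\gamma}$ is the trigon shown in Figure~\ref{fig:localPretrack}, positioned with its cusp at the inbound quadrant. During the push, the eye traverses a neighborhood of $q$ twice, once at time $\itime{q}{1}$ along the tangent $\gamma'(\itime{q}{1})$ and once at time $\itime{q}{2}$ along $\gamma'(\itime{q}{2})$, and on each pass it leaves behind an extra strand of $\varphi_\gamma(\pretrack{\gamma})$ near $q$. I would carry out the local computation quadrant by quadrant, verifying that these new strands sit inside the tie neighborhood of the original trigon and that their tangent directions are compatible with the ties of $\pretrack{\gamma}$. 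The orientation convention (cusp on the inbound quadrant) is precisely what makes this work: the new strands generated by the two passes can be routed along the outbound and side edges of the trigon, but would be forced into the cusp, violating transversality, under any other convention.

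The main obstacle will be the bookkeeping at self-intersection points: one must follow each strand of $\pretrack{\gamma}$ through the isotopy and check that both its position and its tangent direction land correctly in the appropriate tie near each $q$, and also match up globally with the analogous picture at the basepoint. Once these local pictures are assembled, the desired supporting isotopy $\Phi_t$ is a patchwork of the local isotopies, and the image $\Phi_1(\varphi_\gamma(\pretrack{\gamma}))$ lies in $N$ transverse to the ties, yielding the carrying $\varphi_\gamma(\pretrack{\gamma}) \prec \pretrack{\gamma}$.
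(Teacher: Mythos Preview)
Your proposal has the right local flavor but is missing the key mechanism. The difficulty you gloss over is this: when the eye passes through a self-intersection point $q$ the first time (at $t_{q1}$), the local trigon at $q$ is genuinely modified, so when the eye returns at $t_{q2}$ it encounters this modified structure, not the original one. Your phrase ``leaves behind an extra strand on each pass'' does not account for what the second pass actually sees, and your proposed quadrant-by-quadrant end-state check would have to untangle the compound effect of both passes at every $q$ simultaneously---a mess you have not indicated how to organize. (Also, since $\pretrack{\gamma}$ is built essentially on top of $\gamma$, your remark that branches outside $U$ are trivially carried is vacuous: all of $\pretrack{\gamma}$ lies in any regular neighborhood of $\gamma$.)

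The paper supplies the missing idea: for each $t\in[0,1]$, let $\pretrack{t}$ be the pretrack induced by the reparameterization of $\gamma$ based at $\gamma(t)$, and prove step by step along $[0,1]$ that $F_s(\pretrack{\gamma})\prec\pretrack{s}$. The crucial observation is that pushing the eye once through $q$, followed by explicit pinch and collapse moves, produces exactly $\pretrack{s'}$: the trigon at $q$ gets \emph{flipped} (the through-branch becomes tangent to the other crossing direction), which is precisely the effect of reparameterizing, since the order of traversal at $q$ has changed. Because the full loop crosses each $q$ exactly twice, each trigon flips twice and returns to its original form, so $\pretrack{1}=\pretrack{\gamma}$ and the carrying $\varphi_\gamma(\pretrack{\gamma})\prec\pretrack{\gamma}$ follows. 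This factoring through intermediate reparameterized tracks, together with the elementary carrying moves at each crossing, is what makes the local analysis clean and uniform; it is the ingredient your direct end-state approach lacks.
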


\begin{proof}
Observe that the pretrack $\pretrack{\gamma}$ depends on the choice of basepoint $p\in S$. For $t\in [0,1]$, we may choose a reparameterization $\widehat{\gamma}$ of $\gamma$ based at the point $\widehat{\gamma}(0) = \gamma(t)$, and we let $\pretrack{t} = \pretrack{\widehat{\gamma}}$ denote the corresponding pretrack. This new track differs from $\pretrack{\gamma}$ in two important ways: Firstly, the eye of the track is now located at $\gamma(t)$ instead of at $p=\gamma(0)$. Secondly, the order of traversal at a self-intersection point $q$ might have changed; this would have the effect of reversing the orientation of the local picture around $q$---the inbound and outbound quadrants would be unchanged but the segment connecting $a_2^-$ and $a_2^+$ would be replaced by a segment joining $a_1^-$ and $a_1^+$ (see Figure~\ref{fig:localPretrack}). In this case we say that the branch containing $q$ has ``flipped'' in order to satisfy the condition that it is always transverse to the direction of travel for $\widehat{\gamma}$'s \emph{first} intersection with $q$.

Before proceeding with the proof, we highlight the key idea: pushing across a self-intersection point has the effect of moving the eye and flipping the branch containing that point. Since one full loop around $\gamma$ crosses each intersection point twice, each branch flips twice and there is no net effect. To make this precise, we argue as follows.

Recall from \S\ref{sec:weaving} that the point-pushing homeomorphism $\varphi_\gamma$ is obtained at the end of an isotopy $F_t\colon S\to S$ that pushes the point $p$ around $\gamma$ via the formula $F_t(p)=\gamma(t)$. Recall also that the pretrack $\pretrack{\beta}\subset S$ induced by a closed curve $\beta\colon[0,1]\to S$ is, by definition, contained in the punctured surface $S\setminus\{\beta(0)\}$. Therefore, for each $t\in[0,1]$, the reparameterized track $\pretrack{t}$ defined above satisfies $\pretrack{t}\subset S\setminus\{\gamma(t)\}$. Notice that we also have $F_t(\pretrack{\gamma})\subset S \setminus\{\gamma(t)\}$. 

Let $[s,s']\subseteq [0,1]$ be a time interval during which $F_t$ either pushes $p$ along an edge of $\gamma$ or pushes $p$ through a self-intersection point of $\gamma$. We will prove that if $F_{t}(\pretrack{\gamma})\prec \pretrack{t}$ in the punctured surface $S\setminus\{\gamma(t)\}$ when $t=s$, then the same is true when $t=s'$. Since $[0,1]$ may be covered by finitely many of these intervals, it will then follow that $\varphi_\gamma(\pretrack{\gamma}) = F_1(\pretrack{\gamma}) \prec \pretrack{1} = \pretrack{\gamma}$ in the punctured surface $S\setminus\{p\}$.

Strictly speaking, we should start with the track $F_{s}(\pretrack{\gamma})$ and apply the isotopy over the interval $[s,s']$ to obtain $F_{s'}(\pretrack{\gamma})$. Instead, we will use the given carrying $F_{s}(\pretrack{\gamma})\prec \pretrack{s}$ and start with the track $\pretrack{s}$. We then apply the isotopy to $\pretrack{s}$ and obtain a new track $\sigma$. Since we could have alternately taken note of the steps in the carrying and simply performed them after completing the isotopy, we see that $F_{s'}(\pretrack{\gamma})\prec \sigma$. Thus it suffices to show $\sigma\prec \pretrack{s'}$.

We first consider the case where $F_t$ pushes $p$ along an edge of $\gamma$ during the interval $[s,s']$. Consider a small neighborhood $U$ of $\gamma([s,s'])$ that does not contain any self-intersection points of $\gamma$. The isotopy may be chosen so that the complement of $U$ is unchanged throughout the interval $[s,s']$, that is, such that $F_t\circ F_{s'}\inv\vert_{S\setminus U}$ is equal to the identity for all $t\in[s,s']$. Furthermore, the pretrack $\pretrack{s}$ may be constructed so that the eye of $\pretrack{s}$ is contained in $U$. As the isotopy pushes $\gamma(s)$ along the path $\gamma([s,s'])$, we may assume that the eye of $\pretrack{s}$ retains its structure as it slides through $U$. Since the rest of the track remains unchanged, the resulting track at time $s'$ is exactly $\pretrack{s'}$. Thus $F_{s'}(\pretrack{\gamma})\prec \pretrack{s'}$. 

\begin{figure}
\centering
\subfigure[Coming up to an intersection point.]
{
\labellist\small\hair 2pt
\pinlabel {$p$} [r] <0pt,0pt> at 67 53
\pinlabel {$\drag$} [br] <1pt,-1pt> at 52 77
\pinlabel {$\lside$} [bl] <2pt,-1pt> at 94 58
\pinlabel {$\rside$} [tl] <1pt,0pt> at 92 45
\pinlabel {$\cross$} [r] at 152 50
\pinlabel {$\outcor$} [bl] <1pt,0pt> at 170 66
\pinlabel {$\sidecor$} [tl] at 172 44
\endlabellist
\includegraphics[scale=1.0]{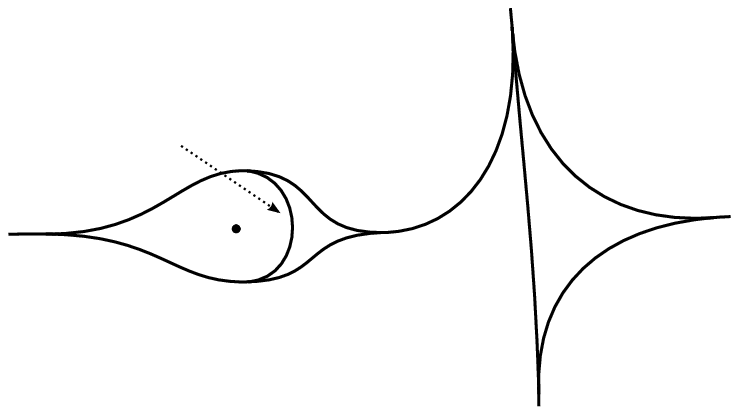}
\label{fig:beforeIntersection}
}
\subfigure[Push through the intersection point.]
{
\labellist\small\hair 3pt
\pinlabel {$p$} [r] at 157 56
\pinlabel {$\drag$} [b] <0pt,1pt> at 96 63
\pinlabel {$\lside$} [br] at 80 76
\pinlabel {$\rside$} [t] at 90 43
\pinlabel {$\cross$} [r] <1pt,2pt> at 107 24
\pinlabel {$\outcor$} [b] <-1pt,0pt> at 148 87
\pinlabel {$\sidecor$} [t] at 148 27
\endlabellist
\includegraphics[scale=1.0]{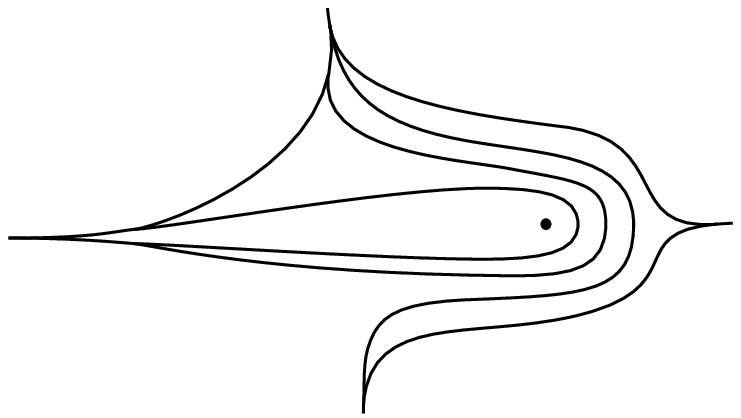}
\label{fig:throughIntersection}
}
\subfigure[Pinch branches together.]
{
\labellist\small\hair 2pt
\pinlabel {$p$} [r] <-1pt,0pt> at 159 56
\endlabellist
\includegraphics[scale=1.0]{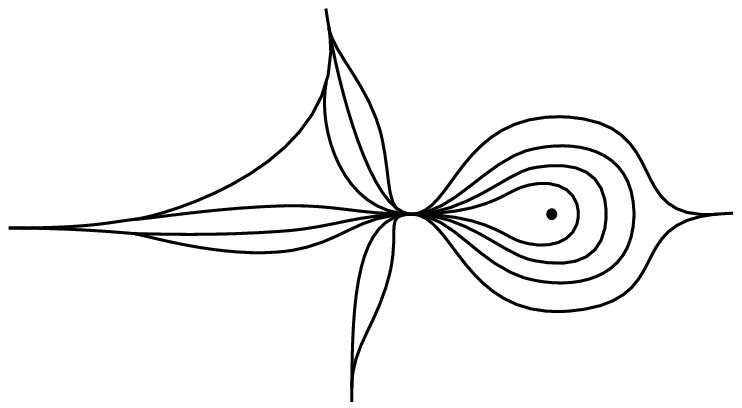}
\label{fig:throughIntersectionBigons}
}
\subfigure[Collapse the bigons.]
{
\labellist\small\hair 2pt
\pinlabel {$p$} [r] <-1pt,0pt> at 167 56
\pinlabel {$\drag'$} [r] <2pt,0pt> at 146 34
\pinlabel {$\lside'$} [bl] <0.5pt,0.5pt> at 190 60
\pinlabel {$\rside'$} [tl] <0pt,1pt> at 188 51
\pinlabel {$\cross'$} [t] <0pt,-1pt> at 81 57
\pinlabel {$\outcor'$} [bl] <1pt,-1pt> at 101 69
\pinlabel {$\sidecor'$} [br] <1pt,-1pt> at 75 71
\endlabellist
\includegraphics[scale=1.0]{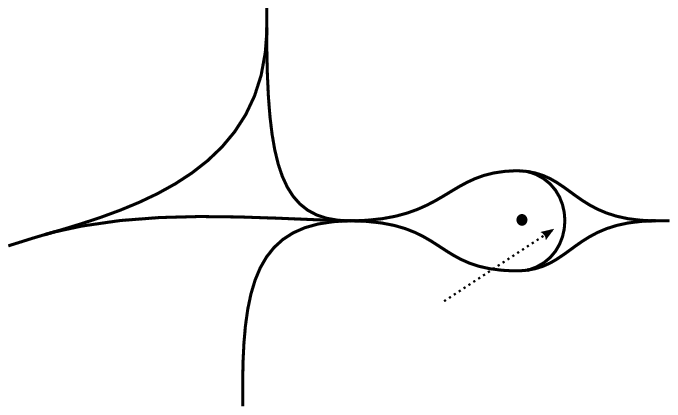}
\label{fig:afterIntersection}
}
\caption{Proving that $\pretrack{\gamma}$ carries $\push(\gamma)(\pretrack{\gamma})$: navigating a self-intersection point of $\gamma$.}
\label{fig:invarianceAtIntersection}
\end{figure}

It remains to examine the case where $F_t$ pushes $p$ through a self-intersection point $q$ during the interval $[s,s']$. 
At time $t=s$ we have the track $\pretrack{s}$, and the initial situation is as depicted in Figure~\ref{fig:beforeIntersection}. Pushing $p$ through $q$ results in the track $\sigma$ illustrated in Figure \ref{fig:throughIntersection}. After pinching several branches together as in Figure~\ref{fig:throughIntersectionBigons}, we may collapse the the resulting bigons to obtain the track $\sigma'$ shown in Figure~\ref{fig:afterIntersection}. Note that $\sigma\prec \sigma'$. A comparison of $\pretrack{s}$ and $\sigma'$ shows that pushing through $q$ has the effect of moving the eye to $\gamma(s')$ and flipping the branch containing $q$. Since changing the starting point from $\gamma(s)$ to $\gamma(s')$ switches the order of traversal at $q$---the direction we just pushed is the \emph{second} direction of traversal if we start at $\gamma(s')$---$\sigma'$ evidently satisfies the defining characteristics of $\pretrack{s'}$; whence $\sigma\prec \pretrack{s'}$.
\end{proof}

The proof of Proposition~\ref{prop:invariantPretrack} exhibits an explicit carrying $\varphi_{\gamma}(\pretrack{\gamma})\prec \pretrack{\gamma}$, and it is straightforward to determine the corresponding incidence matrix. Notice that any weight function is locally determined by its values on the six branches that are labeled in Figure~\ref{fig:beforeIntersection}. We identify these distinguished branches in the following way. In the eye of the pretrack, $\drag$ is the branch immediately in front of the marked point, while $\lside$ and $\rside$ form the left and right sides (in the direction of travel) of the trigon at the front of the eye. In the local picture around a self-intersection point $q$, $\cross_q$ is the branch containing $q$, $\outcor_q$ is the curved branch through the outbound quadrant, and $\sidecor_q$ is the curved branch forming the other side of this trigon.

Suppose that $w$ is a weight function on $\pretrack{\gamma}$, and let $q_1,\dotsc,q_n$ denote the $n = \intnum(\gamma)$ self-intersection points of $\gamma$. We say that $w_e = (w(\drag),w(\lside),w(\rside))$ is the weight vector at the eye, and that $w_i = (w(\cross_{q_i}),w(\outcor_{q_i}),w(\sidecor_{q_i}))$ is the weight vector at $q_i$. The weight function is then completely determined by its weight vectors $w_e,w_1,\dotsc,w_n$.

By keeping track of the weights throughout the carrying illustrated in in Figure~\ref{fig:invarianceAtIntersection}, one finds that pushing the marked point through $q_i$ transforms the weight vector $(w_e,w_1,\dotsc,w_n)$ according to the block matrix
\begin{equation*}
A_{i}^{\text{right}}=
\begin{pmatrix}
\left(\begin{smallmatrix}
1 & 0 & 1 \\
0 & 0 & 0 \\
0 & 0 & 0 \end{smallmatrix}\right)& 
\dotsb & 
\left(\begin{smallmatrix}
1 & 0 & 0\\
0 & 1 & 0\\
0 & 0 & 1 \end{smallmatrix}\right)&
\dotsb & 0\\
\vdots & \ddots & \vdots &  & \vdots\\
\left(\begin{smallmatrix}
2 & 0 & 1 \\
0 & 0 & 1 \\
0 & 1 & 0 \end{smallmatrix}\right)& 
\dotsb & 
\left(\begin{smallmatrix}
0 & 0 & 0\\
1 & 1 & 0\\
0 & 0 & 0 \end{smallmatrix}\right)&
\dotsb & 0 \\
\vdots & & \vdots & \ddots & \vdots\\
0 & \dotsb & 0 & \dotsb & I
\end{pmatrix}
\quad
\text{or}
\quad
A_{i}^{\text{left}} =
\begin{pmatrix}
\left(\begin{smallmatrix}
1 & 1 & 0 \\
0 & 0 & 0 \\
0 & 0 & 0 \end{smallmatrix}\right)& 
\dotsb & 
\left(\begin{smallmatrix}
1 & 0 & 0\\
0 & 0 & 1\\
0 & 1 & 0 \end{smallmatrix}\right)&
\dotsb & 0\\
\vdots & \ddots & \vdots & &  \vdots\\
\left(\begin{smallmatrix}
2 & 1 & 0 \\
0 & 1 & 0 \\
0 & 0 & 1 \end{smallmatrix}\right)& 
\dotsb & 
\left(\begin{smallmatrix}
0 & 0 & 0\\
1 & 1 & 0\\
0 & 0 & 0 \end{smallmatrix}\right)&
\dotsb & 0 \\
\vdots & & \vdots & \ddots & \vdots\\
0 & \dotsb & 0 & \dotsb & I 
\end{pmatrix}
\end{equation*}
depending on whether the inbound quadrant is on the right (as in Figure~\ref{fig:invarianceAtIntersection}) or left side of $p$. Here $A_i^{\text{right}}$ and $A_{i}^{\text{left}}$ are each the identity matrix except for the four indicated blocks in the $e$ and $i$ positions. The full incidence matrix for the carrying is then an appropriate product of these matrices.

\begin{cor}[Incidence matrix]\label{cor:incidenceMatrix}
Suppose that a generic closed curve $\gamma\colon[0,1]\to S$ crosses its $n=\intnum(\gamma)$ self-intersection points $\{q_i\}$ in the order $q_{i_1},q_{i_2},\dotsc,q_{i_{2n}}$ and that the handedness of the $j^{\text{th}}$ crossing is $\mathcal{O}_j\in\{\text{right},\text{left}\}$. Let $M_\gamma\colon\weights{{\pretrack{\gamma}}}\to \weights{{\pretrack{\gamma}}}$ be the linear transformation induced by the carrying $\varphi_\gamma(\pretrack{\gamma})\prec \pretrack{\gamma}$ described in Proposition~\ref{prop:invariantPretrack}. Then the action of $M_\gamma$ on a weight vector $(w_e,w_1,\dotsc,w_n)$ is given by the matrix product
\begin{equation*}
M_\gamma = A_{i_{2n}}^{\mathcal{O}_{2n}}\dotsm A_{i_1}^{\mathcal{O}_1}.
\end{equation*}
\end{cor}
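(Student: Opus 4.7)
The plan is to follow the carrying $\varphi_\gamma(\pretrack{\gamma})\prec \pretrack{\gamma}$ built step-by-step in the proof of Proposition~\ref{prop:invariantPretrack} and read off its effect on weight functions at each elementary step. Recall that the isotopy $F_t\colon S\to S$ was decomposed into finitely many subintervals $[s,s']\subseteq[0,1]$ on which $p$ either travels along a smooth embedded arc of $\gamma$ or passes through a single self-intersection point. Along an arc segment the pretrack $\pretrack{s}$ simply slides rigidly to $\pretrack{s'}$ with its eye intact and no combinatorial change; the associated carrying is a homeomorphism of tracks and therefore induces the identity on weight vectors. Consequently the only nontrivial contributions to $M_\gamma$ come from the $2n$ crossings of self-intersection points, which occur in the prescribed order $q_{i_1},q_{i_2},\dotsc,q_{i_{2n}}$ with handedness $\mathcal{O}_1,\dotsc,\mathcal{O}_{2n}$.

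For each such crossing, I would verify directly from Figure~\ref{fig:invarianceAtIntersection} that the induced transformation on weights is exactly the matrix $A_{i_j}^{\mathcal{O}_j}$. This is a purely local bookkeeping check: at every self-intersection point $q_k$ with $k\neq i_j$, the local picture is unaffected by the crossing, which accounts for the identity blocks on the diagonal. At the eye and at $q_{i_j}$, one tracks the pinching and bigon-collapsing shown in Figures~\ref{fig:throughIntersectionBigons}--\ref{fig:afterIntersection} and expresses each new distinguished branch as a nonnegative integer combination of the old ones. For instance, the new drag branch $\drag'$ receives the weight of the old $\cross_{q_{i_j}}$ together with a contribution coming from the strand that was previously dragging the eye, producing the entries of the block at position $(e,i_j)$; the entries of $A_i^{\text{right}}$ versus $A_i^{\text{left}}$ differ only by swapping the roles of $\lside/\rside$ and of $\outcor/\sidecor$ according to which side of $p$ the inbound quadrant lies on.

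Since the full carrying $\varphi_\gamma(\pretrack{\gamma})\prec \pretrack{\gamma}$ is obtained by composing these $2n$ elementary carryings in temporal order, and the transition map on weights induced by a composition of carryings is the composition of the individual transition maps, the corresponding linear transformation is
\[
M_\gamma = A_{i_{2n}}^{\mathcal{O}_{2n}}\,A_{i_{2n-1}}^{\mathcal{O}_{2n-1}}\dotsm A_{i_1}^{\mathcal{O}_1},
\]
written in functional-composition order with the first-applied factor on the right. The main obstacle is simply correctness of the matrix entries: one must fix consistent conventions for what $\lside$ and $\rside$ denote before and after each crossing (both are defined relative to the direction in which the eye is currently travelling), and verify that these conventions are respected when the branch at $q_{i_j}$ flips, as described in the proof of Proposition~\ref{prop:invariantPretrack}. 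Once the conventions are pinned down, each block entry can be read directly off Figure~\ref{fig:invarianceAtIntersection}, and the claim follows.
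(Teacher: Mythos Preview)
Your proposal is correct and follows the same approach as the paper. The paper does not give a separate proof of this corollary; it is stated immediately after the discussion following Proposition~\ref{prop:invariantPretrack}, where the matrices $A_i^{\text{right}}$ and $A_i^{\text{left}}$ are introduced by tracking weights through the local carrying of Figure~\ref{fig:invarianceAtIntersection}, and the paper simply remarks that ``the full incidence matrix for the carrying is then an appropriate product of these matrices.''
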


\begin{rem}\label{rem:thePretrack}
It is an obvious drawback that $\pretrack{\gamma}$ is only a pretrack and not, in general, a train track. There is a beautiful algorithm, due to Bestvina and Handel \cite{BestvinaHandel95}, that will find an invariant train track for any pseudo-Anosov mapping class. However, in the case of a point-pushing map, it is not clear how the resulting track depends on the pushing curve. The point of our construction is that $\pretrack{\gamma}$ depends visibly on $\gamma$, and, as seen in Corollary~\ref{cor:incidenceMatrix}, the corresponding incidence matrix depends quantifiably on the self-intersection number $\intnum(\gamma)$. Thus $\pretrack{\gamma}$ provides a connection between $\intnum(\gamma)$ and the dilatation $\pushdil{\gamma}$. It would be interesting if our construction could be modified to produce a train track while maintaining these key features. 
\end{rem}

\section{The largest dilatations}\label{sec:biggest}

Having established a general lower bound on the dilatation of a point-pushing homeomorphism, we now turn our attention to upper bounds. In this section we use the train track theory developed in \S\ref{sec:trainTracks} to bound $\pushdil{\gamma}$ from above and complete the proof of Theorem~\ref{thm:generalBounds}.

\subsection{The image of a loop}\label{sec:loopImages}

As in the proof of the lower bound, we will estimate dilatation by studying the action on simple closed curves and counting intersection numbers. Rather than lifting to the universal cover, as we did in \S\ref{sec:lowerBound}, we will use train tracks to analyze curves directly on the surface. We begin with some general observations.

A simple closed curve can naturally be given the structure of a pretrack. Thus it makes sense to talk about simple closed curves being carried by a track. Let $f\in \Mod(S,p)$ be a pseudo-Anosov map, and suppose that $\tau$ is an invariant track for $f$. We then have the $\card{B}\times \card{B}$ incidence matrix $M$ for the carrying $f(\tau)\prec \tau$, where $B = \{b_i\}$ is the set of branches of $\tau$. If $\alpha\subset (S,p)$ is a simple closed curve that is carried by $\tau$, then the carrying $\alpha\prec \tau$ defines a $\card{B}\times 1$ incidence matrix that we think of as weight vector $v\in \weights{\tau}$. Since $\tau$ is invariant under $f$, it follows that $\tau$ carries $f^k(\alpha)$ for $k\geq 0$ and that the weight vector for the carrying $f^k(\alpha)\prec f^k(\tau)\prec \tau$ is given by $u = M^kv$. This means that $f^k(\alpha)$ is isotopic to a simple closed curve that is contained in the tie neighborhood of $\tau$ and intersects the central tie over $b_i$ exactly $u(b_i)$ times. Conversely, such a representative for $f^k(\alpha)$ may be constructed directly from the weight vector $u$: For each edge $b_i$ of $\tau$, place $u(b_i)$ disjoint segments running parallel to $b_i$. Because $u\in \weights{\tau}$ satisfies the switch conditions, the endpoints incident at each switch match up in pairs to produce a simple closed curve that is isotopic to $f^k(\alpha)$ in $(S,p)$.

Now suppose that $\eta\subset (S,p)$ is another simple closed curve. After adjusting $\eta$ by an isotopy, we may assume that its intersections with $\tau$ are all transverse and contained in the interiors of the branches $b_i$. Corresponding to this setup, we then have the $1\times \card{B}$ \emph{intersection matrix} $N = (N_{1j})$ given by $N_{1j} =\card{\eta\cap b_j}$. If $a\subset (S,p)$ is the simple closed curve constructed from the weight vector $u = M^kv$ as above, it follows that the cardinality of $\eta\cap a$ is given by the product $Nu$. In particular, for all integers $k\geq 0$, the matrix products
\begin{equation}\label{eqn:boundOnIntersections}
NM^kv = Nu = \card{\eta\cap a} \geq \intnum(\eta,f^k(\alpha))
\end{equation}
give convenient upper bounds on the intersection numbers of $f^k(\alpha)$ and $\eta$.

\subsection{An upper bound on dilatation}

We now carry out such an estimate in the case of a point-pushing pseudo-Anosov map. Let $\gamma\colon[0,1]\to S$ be a filling curve on an oriented surface $S = S_{g,n}$, where $3g+n> 3$. After adjusting $\gamma$ by an isotopy, we may assume it is generic and that it realizes the minimum self-intersection number $\intnum(\gamma)> 0$ in Definition~\ref{defn:selfIntNum}. It follows from Kra's theorem that the point-pushing homeomorphism $\varphi_\gamma$ is pseudo-Anosov, and we consider its induced invariant pretrack $\pretrack{\gamma}$ as constructed in Definition~\ref{defn:inducedPretrack}. In order to apply the above discussion and relate intersection numbers to dilatation, we need to find an essential simple closed curve that is carried by $\pretrack{\gamma}$.

Suppose that $\intnum(\gamma)=n$, and let $q_1,\dotsc,q_n\in S$ be the $n$ self-intersection points of $\gamma$. The path $\gamma\colon[0,1]\to S$ crosses each of these points twice, and they are ordered according to their first crossing times. Thus $q_n$ is the intersection point that is \emph{reached} last, and not necessarily the intersection point that is crossed last. Using $q_n\in \gamma$ as a break point, we form the decomposition $\gamma = \alpha\delta\beta$,
where $\alpha$ is the initial portion of $\gamma$ from $p=\gamma(0)$ to $q_n$, $\delta$ is the subsequent loop based at $q_n$, and $\beta$ is the final segment from $q_n$ back to $p=\gamma(1)$.

\begin{lem}\label{lem:simpleCurveCarrying}
The subloop $\delta\subset \gamma$ is a simple closed curve that is carried by $\pretrack{\gamma}$.
\end{lem}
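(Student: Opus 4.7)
The plan is to verify the two assertions separately.

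\textbf{Simplicity.} The intersection points are indexed so that $\itime{q_1}{1} < \itime{q_2}{1} < \cdots < \itime{q_n}{1}$. For every $i < n$ this means $\itime{q_i}{1} \notin [\itime{q_n}{1},\itime{q_n}{2}]$, so the restriction $\delta = \gamma|_{[\itime{q_n}{1},\itime{q_n}{2}]}$ visits each $q_i$ (with $i<n$) at most once, and visits $q_n$ only at its two endpoints. Since $q_n$ is a transverse self-intersection of $\gamma$, the tangent vectors $v_1 = \gamma'(\itime{q_n}{1})$ and $v_2 = \gamma'(\itime{q_n}{2})$ are linearly independent; consequently $\delta$, viewed as a closed loop based at $q_n$, has a genuine corner there rather than a self-tangency. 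A small local isotopy smoothing this corner produces a simple closed curve.

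\textbf{Carrying.} I will show that after the corner at $q_n$ is smoothed in a specific way, $\delta$ lies on $\pretrack{\gamma}$. Outside small neighborhoods of the $q_i$ and the basepoint $p$, the pretrack coincides with $\gamma$ by construction; since $\gamma$ meets $p$ only at parameter values $0$ and $1$, $\delta$ never enters the eye of $\pretrack{\gamma}$. At each $q_i$ with $i<n$ that $\delta$ does cross, the crossing occurs at time $\itime{q_i}{2}$, so $\delta$ passes through $q_i$ with tangent $\gamma'(\itime{q_i}{2})$. The pretrack construction removes only the piece of $\gamma$ around the \emph{first} crossing at $q_i$ (the subarc between $a_1^-$ and $a_1^+$); the main branch from $a_2^-$ through $q_i$ to $a_2^+$, tangent to $\gamma'(\itime{q_i}{2})$, is left intact, and $\delta$ follows it straight through $q_i$.

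The decisive step is at $q_n$. Because $\delta$ leaves $q_n$ with outgoing tangent $v_1$ and returns with incoming tangent $v_2$, smoothing the corner replaces a small neighborhood of $q_n$ in $\delta$ by a short arc joining a point on the $+v_1$-ray (near $a_1^+$) to a point on the $-v_2$-ray (near $a_2^-$). This arc lies in the side quadrant bounded by $+v_1$ and $-v_2$, which is exactly the quadrant that the curved pretrack branch $(a_1^+, a_2^-)$ cuts off. Hence the smoothed corner can be chosen to coincide (after a tiny isotopy) with this branch. The tangency conditions at the switches $a_1^+$ and $a_2^-$, built into the construction of $\pretrack{\gamma}$, make the isotoped $\delta$ smooth along the pretrack and transverse to its ties, which is what it means to be carried.

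The main subtlety is the geometric bookkeeping at $q_n$: one must verify that the sector swept out when smoothing the corner of $\delta$ really is the $(+v_1,-v_2)$ quadrant, so that it matches the specific curved arc $(a_1^+,a_2^-)$ and not one of the other two curved arcs of the pretrack. Once the sign conventions around $q_n$ are correctly identified, the remainder of the argument is a direct reading off of the pretrack construction in Definition~\ref{defn:inducedPretrack}.
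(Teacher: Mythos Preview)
Your proof is correct and follows essentially the same approach as the paper's: the simplicity argument via the ordering of first-crossing times, the carrying at each intermediate $q_i$ via the branch tangent to $\gamma'(\itime{q_i}{2})$, and the smoothing at $q_n$ using the curved branch joining $a_1^+$ to $a_2^-$ are all exactly what the paper does. The only cosmetic difference is that the paper phrases the key observation at $q_n$ as ``the two edges do not bound the inbound quadrant, so they are not separated by a cusp,'' whereas you identify the specific side quadrant $(+v_1,-v_2)$ directly; these are the same geometric fact.
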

\begin{proof}
For each $1\leq i \leq n$, let $0 < \itime{i}{1} < \itime{i}{2} < 1$ be the two preimages of $q_i$ under $\gamma$. Thus $\delta$ is the restriction of $\gamma$ to the interval $[\itime{n}{1},\itime{n}{2}]$. The ordering on the self-intersection points implies that $\itime{i}{1}<\itime{n}{1}$ for all $i < n$. Therefore $\delta$ is simple because it crosses each self-intersection point of $\gamma$ at most once.

To prove that $\delta\prec \pretrack{\tau}$, we must isotope $\delta$ into the tie neighborhood of $\pretrack{\gamma}$ while keeping $\delta$ transverse to the ties. We first argue that this condition already holds everywhere along $\delta$ except near its basepoint $q_n$. This is clear away from the self-intersection points of $\gamma$, so consider the situation near a self-intersection point $q_i$. There is nothing to prove unless $\delta$ crosses $q_i$, in which case we have $\itime{n}{1} < \itime{i}{2} < \itime{n}{2}$. Therefore $\delta$ crosses $q_i$ while travelling in the $\gamma'(\itime{i}{2})$ direction. Since, by Definition~\ref{defn:inducedPretrack}, $\pretrack{\gamma}$ contains a branch through $q_i$ that is tangent to this direction, it follows that $\delta$ is tangent to $\pretrack{\gamma}$ and that the carrying condition does hold near $q_i$.

It remains to adjust $\delta$ near its basepoint $q_n$ so that the carrying condition is satisfied. Notice that $\delta$ leaves $q_n$ going in the direction $\gamma'(\itime{n}{1})$ and returns travelling in the direction $\gamma'(\itime{n}{2})$. These two edges of $\gamma$ do not bound the inbound quadrant at $q_n$, so they are not separated by a cusp of $\pretrack{\gamma}$. Using the notation of Figure~\ref{fig:pretrackConstruction}, we find that one may isotope $\delta$ so that is starts at the point $a_1^+$, leaves in the direction $\gamma'(\itime{n}{1})$, travels once around $\delta$ to the point $a_2^-$, and then follows the curved branch of $\pretrack{\gamma}$ back to $a_1^+$. The resulting curve, as illustrated in Figure~\ref{fig:essentialCurve}, satisfies the carrying condition and proves that $\delta\prec\pretrack{\gamma}$.
\end{proof}

\begin{figure}
\centering
\labellist\small\hair 2pt
\pinlabel {$\sigma$} [br] at 221 36
\pinlabel {$\delta$} [br] at 170 51
\pinlabel {$\varepsilon$} [tr] <1pt,0pt> at 304 83
\pinlabel {$q_n$} [t] <-2pt,-0.5pt> at 182 23
\pinlabel {$\pretrack{\gamma}$} [br] at 111 90
\endlabellist
\includegraphics[scale=1.0]{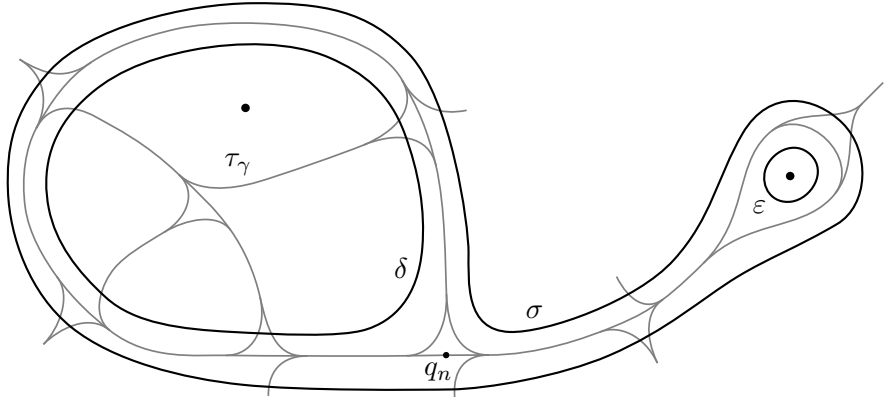}
\caption{The track $\pretrack{\gamma}$ carries both $\delta$ and $\sigma$; at least one of these curves is essential.}
\label{fig:essentialCurve}
\end{figure}

\begin{lem}\label{lem:aCurveExists}
There exists an essential simple closed curve $\sigma\subset (S,p)$ that is carried by $\pretrack{\gamma}$.
\end{lem}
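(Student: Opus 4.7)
The plan is to take the simple closed curve $\delta$ from Lemma~\ref{lem:simpleCurveCarrying} and, if it is not already essential in $(S,p)$, modify it using the eye of $\pretrack{\gamma}$. If $\delta$ is essential in $(S,p)$, set $\sigma = \delta$ and we are done; otherwise proceed as follows.

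The first key observation is that $\delta$ cannot be null-homotopic in $S$. If $\delta$ were to bound a disk in $S$, then it would be null-homotopic rel its basepoint $q_n$, so contracting $\delta$ to the constant path at $q_n$ would define a free homotopy from $\gamma = \alpha\delta\beta$ to $\gamma' = \alpha\beta$. Since $\alpha$ and $\beta$ are unaltered subpaths of $\gamma$, every self-intersection of $\gamma'$ is already a self-intersection of $\gamma$; on the other hand, the crossing at $q_n$ (whose two $\gamma$-preimages $\itime{n}{1}$ and $\itime{n}{2}$ become identified under the contraction) is visited only once by $\gamma'$ and so disappears. Thus $\gamma'$ has strictly fewer than $\intnum(\gamma)$ self-intersections, contradicting the standing assumption from the start of Section~\ref{sec:biggest} that $\gamma$ realizes the minimum in its free homotopy class. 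In particular $\delta$ does not bound a disk containing $p$, so $\delta$ is not peripheral to $p$ in $(S,p)$ either.

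The only remaining case is that $\delta$ is peripheral to some puncture $q$ of $S$ (so in particular $n\geq 1$) and bounds a once-punctured disk $D_q \subset S$ with $p \notin D_q$. In this case I would construct $\sigma$ as a band sum of $\delta$ with the simple closed loop $\epsilon \subset \pretrack{\gamma}$ forming the boundary of the monogon around $p$ in the eye of $\pretrack{\gamma}$, joined along an embedded arc $\eta \subset \pretrack{\gamma}$ from a point of $\delta$ to a point of $\epsilon$; such an $\eta$ exists because $\pretrack{\gamma}$ is a connected graph. On the level of weight functions, $\sigma$ realizes $w_\delta + w_\epsilon$ with the strands rematched along $\eta$ to yield a single connected curve, and since $\sigma$ lies in the tie neighborhood of $\pretrack{\gamma}$ it is carried by $\pretrack{\gamma}$.

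Finally, essentiality of $\sigma$ follows from examining the region it bounds: on one side $\sigma$ bounds the union of $D_q$, a disk around $p$, and a strip along $\eta$, which viewed in the marked surface $(S,p)$ is a disk with the two punctures $q$ and $p$. A curve bounding a twice-punctured disk (Euler characteristic $-1$) is neither null-homotopic nor peripheral to a single puncture, and is therefore essential. The main obstacle in this plan is the non-triviality step: one must argue carefully that contracting $\delta$ truly produces a representative in the free homotopy class of $\gamma$ with strictly fewer self-intersections and no incidental crossings—this relies crucially on $\alpha$ and $\beta$ being left unchanged by the contraction, so that their mutual and internal intersection pattern is inherited unaltered from $\gamma$.
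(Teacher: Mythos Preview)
Your overall strategy matches the paper's: show $\delta$ is nontrivial via the self-intersection minimality of $\gamma$, and if $\delta$ is puncture-parallel, band-sum it with a small loop around $p$ to produce an essential curve bounding a twice-punctured disk. The nontriviality argument and the essentiality endgame are both fine.

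The gap is in the band-sum step. You take an arbitrary embedded arc $\eta\subset\pretrack{\gamma}$ and assert that the resulting $\sigma$ is carried because it ``lies in the tie neighborhood.'' That is not sufficient: a carried curve must be \emph{transverse to the ties}, which at switches means it cannot turn through a cusp. When you cut $\delta$ and $\epsilon$ and splice in two parallel copies of $\eta$, the junctions at either end of $\eta$ must be smooth with respect to the switch structure of $\pretrack{\gamma}$, and a generic arc in the underlying graph will not have this property. You also need the interior of $\eta$ to be disjoint from $\delta$ so that $\sigma$ is simple, and your weight-function description is off (a band sum along $\eta$ adds weight $2$ along $\eta$, it does not realize $w_\delta+w_\epsilon$).

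The paper closes this gap by choosing a \emph{specific} arc: the terminal segment $\beta=\gamma|_{[t_{n2},1]}$ from $q_n$ to $p$. Because $\beta$ is part of $\gamma$ itself and crosses each self-intersection point only in its second direction of traversal, it is everywhere tangent to $\pretrack{\gamma}$ for exactly the same reason $\delta$ is; its interior is disjoint from $\delta$ by the definition of $q_n$; and the local pretrack structure at $q_n$ and at the eye (Figures~\ref{fig:localPretrack} and \ref{fig:basedPretrack}) lets one check directly that the splices are smooth. You should replace your abstract $\eta$ with this concrete $\beta$ and perform that local check.
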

\begin{proof}
Any nullhomotopy of $\delta$ would provide a homotopy between $\gamma=\alpha\delta\beta$ and $\alpha\beta$ that reduces the self-intersection number of $\gamma$. Since this is not possible, $\delta$ is nontrivial in $\pi_1(S)$. If $\delta$ is also essential, then we are done by Lemma~\ref{lem:simpleCurveCarrying}. Otherwise, $\delta$ is puncture-parallel and we construct an essential curve as follows.

The definition of $q_n$ ensures that the restriction of $\gamma$ to $(\itime{n}{1},1]$ crosses each self-intersection point $q_i$ at most once. Therefore $\beta = \gamma\vert_{[\itime{n}{2},1]}$ is a simple path from $q_n$ to $p$ whose interior is disjoint from $\delta$. Note also that $\beta$ is everywhere tangent to $\pretrack{\gamma}$ for the same reason that $\delta$ is. Let $\varepsilon\subset S$ be a simple closed curve around the marked point $p$; for example, $\varepsilon$ may be chosen to be the boundary of a small neighborhood of $p$. Using $\beta$ as a guide, form the connect sum $\sigma=\delta\#\varepsilon$ of $\delta$ with $\varepsilon$. More precisely, remove an interval from each loop and glue in two parallel copies of $\beta$ to form a single closed curve that is homotopic to $\delta\beta\varepsilon\bar{\beta}$. As illustrated in Figure~\ref{fig:essentialCurve}, the structure of the pretrack near $q_n$ and $p$ ensures that $\sigma$ is carried by $\pretrack{\gamma}$. Furthermore, $\sigma$ is simple because it is the union of four simple segments whose interiors are disjoint.

Recalling that the marked point $p$ counts as a puncture, we see that $\delta$ and $\varepsilon$ are both puncture-parallel, separating curves. Therefore $\sigma$ separates $S$ into two components, one of which is a twice-punctured disk. Since $S$ is neither $S_{0,2}$ nor $S_{0,3}$, the other component of $S\setminus \sigma$ cannot be a disk or a once-punctured disk. This proves that $\sigma$ is essential.
\end{proof}

Now that we have found an essential, simple closed curve that is carried by $\pretrack{\gamma}$, we may proceed to estimate intersection numbers and bound the dilatation $\pushdil{\gamma}$. Let $B = \{b_i\}$ be the set of branches of $\pretrack{\gamma}$, and let $M$ be the $\card{B}\times \card{B}$ incidence matrix for the carrying $\varphi_\gamma(\pretrack{\gamma})\prec \pretrack{\gamma}$ given in Proposition~\ref{prop:invariantPretrack}. Take $\sigma\subset S$ to be the essential curve from Lemma~\ref{lem:aCurveExists}, and let $v\in \weights{\pretrack{\gamma}}$ be the $\card{B}\times 1$ weight vector associated to the carrying $\sigma\prec \pretrack{\gamma}$.

Let $B'\subset B$ be the set of $3(\intnum(\gamma)+1)$ distinguished branches defined after the proof of Proposition~\ref{prop:invariantPretrack}, and let $\weights{\pretrack{\gamma}}'\subset \R^{B'}$ be the image of $\weights{\pretrack{\gamma}}$ under the projection $P\colon\R^{B} \to \R^{B'}$. Since any weight vector is determined by its values on these edges, there is a linear bijection $Q\colon\weights{\pretrack{\gamma}}'\to \weights{\pretrack{\gamma}}$ that inverts $P$ (see the discussion in \S\ref{sec:trainTrackPrelims}). This map may be realized, in a non-unique way, by a $\card{B}\times\card{B'}$ matrix $Q$ whose $i^{\text{th}}$ row expresses a weight function's value on $b_i$ as a linear combination of its values on the branches $b'\in B'$. Since the action of $\varphi_\gamma$ on $\weights{\pretrack{\gamma}}'$ is given by the $\card{B'}\times\card{B'}$ incidence matrix $M_\gamma$ in Corollary~\ref{cor:incidenceMatrix}, we evidently have that $M = QM_\gamma P$.

Let $v'= Pv$ denote the projection of $v$ to $\weights{\pretrack{\gamma}}'$. This may be expressed as a vector $v' = (v_1',\dotsc,v_l')$ of size $l = \card{B'}$ whose entries are nonnegative integers. According to Corollary~\ref{cor:incidenceMatrix}, $M_\gamma$ is a product of $2\cdot\intnum(\gamma)$ matrices of the form $A_i^{\text{right}}$ or $A_i^{\text{left}}$. This structure gives us good control on the size of $M_\gamma^kv'$ in terms of $k$ and $\intnum(\gamma)$.

\begin{lem}\label{lem:matrixBound}
Let $u = (u_1,\dotsc,u_l)$ be a vector of size $l =\card{B'}$, and let $c = \max_i\{u_i\}$ denote its largest entry. If $D$ is a product of $k\geq 0$ matrices of the form $A_i^{\text{right}}$ or $A_i^{\text{left}}$ given in Corollary~\ref{cor:incidenceMatrix}, then every entry of the vector $Du$ is bounded above by $3^kc$.
\end{lem}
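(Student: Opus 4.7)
The plan is to induct on $k$. The base case $k=0$ is trivial since $D$ is the identity. For the inductive step, it suffices to establish the single-step claim: for any vector $u$ with nonnegative maximum entry $c$ and any elementary factor $A \in \{A_i^{\text{right}}, A_i^{\text{left}}\}$, every entry of $Au$ is at most $3c$. Applied repeatedly, this yields the bound $3^k c$ after $k$ factors.

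To verify the single-step claim, I would examine the block structure of $A_i^{\text{right}}$ and $A_i^{\text{left}}$ given in Corollary~\ref{cor:incidenceMatrix}. Outside of the $e$-block-row and the $i$-block-row, each matrix agrees with the identity, so those entries of $Au$ equal the corresponding entries of $u$ and are bounded by $c \leq 3c$. Thus only the entries in the $e$-block and the $i$-block of $Au$ need checking, and for these one simply reads off the row sums of the nonzero blocks in the $e$ and $i$ positions.

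For instance, in $A_i^{\text{right}}$ the nontrivial rows in the $e$-block-row combine at most $u_e[1], u_e[3]$, and $u_i[1]$ in the first component, $u_i[2]$ in the second, and $u_i[3]$ in the third, so each is bounded by $3c$. Similarly, the $i$-block-row yields first component $2u_e[1] + u_e[3] \leq 3c$, second component $u_e[3] + u_i[1] + u_i[2] \leq 3c$, and third component $u_e[2] \leq c$. An identical calculation works for $A_i^{\text{left}}$ (the blocks are the same up to permuting coordinates). This establishes the single-step bound and completes the induction.

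The argument is essentially bookkeeping: nothing conceptual is required beyond noticing that every row of every block appearing in $A_i^{\text{right}}$ or $A_i^{\text{left}}$ has entries summing to at most $3$, and that the support of each such row involves at most three coordinates of $u$. The only minor subtlety is to remember that the matrix acts as the identity on all block positions other than $e$ and $i$, so that the maximum over all output coordinates is genuinely controlled by the worst row in the $e$- or $i$-block, and the induction hypothesis propagates cleanly.
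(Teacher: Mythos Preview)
Your proposal is correct and follows essentially the same approach as the paper: induction on $k$, using that every row of each elementary matrix $A_i^{\text{right}}$ or $A_i^{\text{left}}$ has entries summing to at most $3$. The paper's proof is simply more terse, writing the inductive step as $(ADu)_i = \sum_j A_{ij}(Du)_j \leq 3^k c \sum_j A_{ij} \leq 3^{k+1}c$ without explicitly verifying the row sums block by block as you do.
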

\begin{proof}
If $A = (A_{ij})$ is any matrix of the specified form, then each row of $A$ has sum at most $3$. Assuming inductively that the claim holds for any given $k$-fold product $D$, we find that
\[\left(ADu\right)_i = \sum_j A_{ij}\left(Du\right)_j \leq \sum_j A_{ij}3^kc = 3^kc\sum_j A_{ij} \leq 3^kc\cdot 3.\]
Therefore the claim also holds for the $(k+1)$-fold product $AD$.
\end{proof}

We now give a general upper bound on the dilatation $\pushdil{\gamma}$. Together with Theorem~\ref{thm:lowerBound}, the following result completes the proof of Theorem~\ref{thm:generalBounds}.

\begin{thm}[The upper bound]\label{thm:upperBound}
Let $S = S_{g,n}$ be a surface satisfying $3g + n > 3$, and let $\gamma\colon[0,1]\to S$ be a filling loop based at $\gamma(0)=p$. Then the dilatation $\pushdil{\gamma}$ of the mapping class $\push(\gamma)$ is bounded above by $9^{\intnum(\gamma)}$.
\end{thm}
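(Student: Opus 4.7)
The plan is to combine the invariant pretrack machinery from \S\ref{sec:trainTracks}--\S\ref{sec:loopImages} with Thurston's intersection-number characterization of dilatation (Theorem~\ref{thm:pAIntersectionNumber}). After isotoping $\gamma$ to a generic representative realizing $\intnum(\gamma)$, Kra's Theorem~\ref{thm:Kra} guarantees that $\varphi_\gamma$ is pseudo-Anosov, and Proposition~\ref{prop:invariantPretrack} supplies the $\varphi_\gamma$-invariant pretrack $\pretrack{\gamma}$. By Lemma~\ref{lem:aCurveExists} there is an essential simple closed curve $\sigma \subset (S,p)$ with $\sigma \prec \pretrack{\gamma}$; let $v \in \weights{\pretrack{\gamma}}$ be its weight vector.

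Fix any essential simple closed curve $\eta \subset (S,p)$ that is transverse to $\pretrack{\gamma}$, with associated intersection row-vector $N$. The key inequality from \eqref{eqn:boundOnIntersections} gives
\[
\intnum(\eta,\varphi_\gamma^k(\sigma)) \leq N M^k v,
\]
where $M$ is the incidence matrix for the carrying $\varphi_\gamma(\pretrack{\gamma}) \prec \pretrack{\gamma}$. The plan is to bound $M^k v$ using the reduced matrix $M_\gamma$ from Corollary~\ref{cor:incidenceMatrix}. Writing $M = Q M_\gamma P$ for the bijection $Q\colon \weights{\pretrack{\gamma}}' \to \weights{\pretrack{\gamma}}$ and its inverse $P$, one checks that $M^k v = Q M_\gamma^k v'$ with $v' = Pv$.

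The core computation is then to bound the entries of $M_\gamma^k v'$. According to Corollary~\ref{cor:incidenceMatrix}, $M_\gamma$ is a product of $2\,\intnum(\gamma)$ matrices of the form $A_i^{\text{right}}$ or $A_i^{\text{left}}$, so $M_\gamma^k$ is such a product of length $2k\,\intnum(\gamma)$. Lemma~\ref{lem:matrixBound} (whose proof only relies on the fact that each row of each $A_i^{\mathcal{O}}$ has sum at most $3$) then yields that every entry of $M_\gamma^k v'$ is at most $3^{2k\,\intnum(\gamma)} c = 9^{k\,\intnum(\gamma)} c$, where $c$ is the maximum entry of $v'$. Since $N$ and $Q$ are fixed (independent of $k$), absorbing $|NQ|$ and $c$ into a single constant $C > 0$ gives
\[
\intnum(\eta,\varphi_\gamma^k(\sigma)) \;\leq\; N Q\, M_\gamma^k v' \;\leq\; C \cdot 9^{k\,\intnum(\gamma)}
\]
for every $k \geq 0$.

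To convert this bound into the claimed bound on dilatation, I apply Theorem~\ref{thm:pAIntersectionNumber} to the essential simple closed curves $\sigma$ and $\eta$: there is a constant $c' \in (0,\infty)$ so that $\intnum(\varphi_\gamma^k(\sigma),\eta)/\pushdil{\gamma}^k \to c'$. Combining with the previous inequality forces $\pushdil{\gamma}^k \leq (C/c'\!+\!o(1)) \cdot 9^{k\,\intnum(\gamma)}$, and taking $k^{\text{th}}$ roots as $k \to \infty$ yields $\pushdil{\gamma} \leq 9^{\intnum(\gamma)}$. The main technical obstacle is really just the careful bookkeeping of the row-sum bound in Lemma~\ref{lem:matrixBound}, together with verifying that $M^k v = Q M_\gamma^k v'$; the rest follows from the already-developed framework. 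Note that the argument never uses primitivity of $\gamma$, so the upper bound holds for all filling curves as stated.
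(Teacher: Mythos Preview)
Your proposal is correct and follows essentially the same approach as the paper's proof: both use the invariant pretrack $\pretrack{\gamma}$, the carried essential curve $\sigma$ from Lemma~\ref{lem:aCurveExists}, the factorization $M^kv = QM_\gamma^k v'$, the row-sum bound of Lemma~\ref{lem:matrixBound} applied to the $2k\,\intnum(\gamma)$-fold product, and then Theorem~\ref{thm:pAIntersectionNumber} to extract the dilatation bound. The only cosmetic difference is that the paper phrases the last step as ``the right-hand side is bounded away from zero'' rather than taking $k$th roots, but these are equivalent.
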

\begin{proof}
Retaining the notation of the preceding discussion, we see that for $k\geq 0$ the weight vector associated to the carrying $\varphi_\gamma^k(\sigma)\prec \pretrack{\gamma}$ is given by the matrix product
\begin{equation*}
M^k v = QM_\gamma^kv'.
\end{equation*}
Let $\eta\subset (S,p)$ be any essential simple closed curve that is transverse to the track $\pretrack{\gamma}$ and disjoint from the switches. We may then form the $1\times\card{B}$ intersection matrix $N = (N_{1j})$ whose entries are given by $N_{1j} = \card{\eta\cap b_j}$. It now follows from \eqref{eqn:boundOnIntersections} that the intersection numbers $\intnum(\varphi_\gamma^k(\sigma),\eta)$ are bounded above by the following matrix products
\begin{equation*}
\intnum(\varphi_\gamma^k(\sigma),\eta) \leq NM^kv = NQM_\gamma^kv'.
\end{equation*}

Let $d$ denote the sum of the entries in the $1\times\card{B'}$ matrix $NQ$. By Corollary~\ref{cor:incidenceMatrix}, $M_\gamma^k$ is a product of $2k\cdot\intnum(\gamma)$ matrices of the form $A_i^{\text{right}}$ or $A_i^{\text{left}}$. Therefore, we may apply Lemma~\ref{lem:matrixBound} and conclude that
\begin{equation*}
\intnum(\varphi_\gamma^k(\sigma),\eta) \leq \sum_j (NQ)_j (M_\gamma^kv')_j \leq 3^{2k\intnum(\gamma)}c\sum_j (NQ)_j = dc3^{2k\intnum(\gamma)},
\end{equation*}
where $c = \max\{v_i'\}$ is the largest entry in $v'$. Dividing by $\pushdil{\gamma}^k$, we see that the inequality
\[ \frac{\intnum(\varphi_\gamma^k(\sigma),\eta)}{\pushdil{\gamma}^k} \leq dc\left(\frac{9^{\intnum(\gamma)}}{\pushdil{\gamma}}\right)^k\]
holds for all integers $k\geq 0$. Since $\sigma$ and $\eta$ are both essential, simple closed curves, Theorem~\ref{thm:pAIntersectionNumber} implies that the left hand side has a positive limit as $k$ tends to infinity. It follows that right hand side is bounded away from zero, which necessitates $\pushdil{\gamma}\leq 9^{\intnum(\gamma)}$.
\end{proof}

\begin{rem}\label{rem:overCounting}
It seems unlikely that this upper bound is optimal. When $\tau$ is a  train track, the matrix products in \eqref{eqn:boundOnIntersections} actually grow like $\intnum(f^k(\alpha),\eta)$ and $\dilat{f}^k$. However, if $\tau$ has complementary monogons, then the loops constructed from the weight vectors $M^kv$ will be very inefficient representatives of $f^k(\alpha)$, and the matrix products $NM^kv$ will drastically overestimate the intersection numbers $\intnum(f^k(\alpha),\eta)$. Indeed, in order to minimize the cardinality of $f^k(\alpha)\cap \eta$, one would need to straighten the representative loop $f^k(\alpha)$ by pulling its strands across all of the complementary monogons.

In the case of a pushing curve $\gamma\subset S$ with very high self-intersection number, Euler characteristic considerations imply that $\pretrack{\gamma}$ will have on the order of $\chi(S) + \intnum(\gamma)/2$ complementary monogons and nullgons. Consequently, we expect that our upper bound is far from sharp when $\intnum(\gamma)$ is large.
\end{rem}

\begin{rem}\label{rem:whyTracks}
The above remark illustrates why the pretrack $\pretrack{\gamma}$ is not able to provide a general lower bound on the dilatation $\pushdil{\gamma}$. Namely, without the train track condition on complementary components, the pretrack $\pretrack{\gamma}$ does not aid in calculating the infimum in the Definition~\ref{defn:intNum} of intersection number. The pretrack $\pretrack{\gamma}$ does provide an upper bound on $\pushdil{\gamma}$ precisely because one can bound $\intnum(\varphi_\gamma^k(\baseloop),\eta)$ from above without taking infimums.
\end{rem}

The following simple argument shows that any general upper bound must be at least on the order of $\exp(C\intnum(\gamma)^{\nicefrac{1}{2}})$ for some constant $C$. In particular, the optimal upper bound on $\pushdil{\gamma}$, in terms of $\intnum(\gamma)$, must lie somewhere between $\exp(C\intnum(\gamma)^{\nicefrac{1}{2}})$ and $\exp(\log(9)\intnum(\gamma))$.

\begin{prop}\label{prop:orderOfUpperBound}
There exists an infinite family $\{\push(\gamma_k)\}$ of point-pushing pseudo-Anosovs and a constant $C$ such that $\intnum(\gamma_k)\to \infty$ and $\pushdil{\gamma_k}\geq \exp(C\sqrt{\intnum(\gamma_k)})$.
\end{prop}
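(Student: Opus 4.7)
The plan is to exhibit the family by taking powers of a single primitive filling loop. Fix any surface $S$ with $3g+n>3$ and any primitive filling curve $\gamma\subset S$; by Kra's theorem (Theorem~\ref{thm:Kra}), $\push(\gamma)$ is pseudo-Anosov with some dilatation $\lambda\colonequals \pushdil{\gamma}>1$. Set $\gamma_k\colonequals\gamma^k$ for $k\geq 1$. Since $\push$ is an (anti)homomorphism, $\push(\gamma_k)=\push(\gamma)^k$, so this class is pseudo-Anosov with dilatation $\pushdil{\gamma_k}=\lambda^k$. The resulting classes have pairwise distinct dilatations and therefore form an infinite family of point-pushing pseudo-Anosovs.

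The decisive input is the elementary self-intersection estimate~\eqref{eqn:muIntBound} that was derived from the ``offset copies'' construction in the proof of Theorem~\ref{thm:lowerBound}: it yields
\[
\intnum(\gamma_k)\;\leq\; k^{2}\,\intnum(\gamma)+k-1 \;\leq\; k^{2}\bigl(\intnum(\gamma)+1\bigr).
\]
Taking square roots gives $\sqrt{\intnum(\gamma_k)}\leq k\sqrt{\intnum(\gamma)+1}$, so that
\[
\log \pushdil{\gamma_k}\;=\;k\log\lambda\;\geq\; \frac{\log\lambda}{\sqrt{\intnum(\gamma)+1}}\,\sqrt{\intnum(\gamma_k)}.
\]
Exponentiating yields $\pushdil{\gamma_k}\geq \exp\bigl(C\sqrt{\intnum(\gamma_k)}\bigr)$ with the constant $C\colonequals \log\lambda\big/\sqrt{\intnum(\gamma)+1}$, which depends only on the initial fixed choice of $\gamma$.

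It remains to verify $\intnum(\gamma_k)\to\infty$, and for this I would simply invoke the upper bound of Theorem~\ref{thm:upperBound} in the reverse direction: applied to $\gamma_k$ it reads $\lambda^k=\pushdil{\gamma_k}\leq 9^{\intnum(\gamma_k)}$, whence $\intnum(\gamma_k)\geq k\log_9\lambda\to\infty$. There is essentially no technical obstacle; the proposition is a bookkeeping corollary of inequalities already in hand. The conceptual point worth emphasizing is that a single primitive filling loop already witnesses the claim: passing to powers makes dilatation grow exponentially in $k$, while self-intersection number grows only quadratically in $k$, which is precisely enough to force a growth rate of the form $\exp(C\sqrt{\intnum})$ and hence to rule out any general upper bound better than $\exp(C'\intnum^{1/2})$.
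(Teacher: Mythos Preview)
Your proof is correct and follows essentially the same approach as the paper: take powers $\gamma_k=\gamma^k$ of a fixed filling loop, use the offset-copies bound $\intnum(\gamma_k)\leq k^2\intnum(\gamma)+k-1$ together with $\pushdil{\gamma_k}=\lambda^k$, and read off the constant $C$. The paper packages the constant slightly differently (bounding $k-1\leq k^2\intnum(\gamma)$ to get $\intnum(\gamma_k)\leq 2k^2\intnum(\gamma)$) and does not separately spell out $\intnum(\gamma_k)\to\infty$, but the argument is the same.
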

\begin{proof}
Let $\mu\subset S$ be any filling curve on $S$ and let $\lambda = \pushdil{\mu}$ be the dilatation of the pseudo-Anosov mapping class $\push(\mu)$. For example, $\mu$ may be chosen to fill $S$ efficiently so that $\intnum(\mu)$ is relatively small. Powers $\gamma_k = \mu^k$ of $\mu$ may then be represented by closed loops that are built by concatenating $k$ offset copies of $\mu$. These offset copies contribute $k^2$ intersection points for each self-intersection point of $\mu$, so we have the upper bound
\[\intnum(\gamma_k) = \intnum(\mu^k) \leq k^2\intnum(\mu)+(k-1)\leq 2k^2\intnum(\mu).\]
Since the dilatation of $\push(\gamma_k)$ is given by $\pushdil{\gamma_k} = \lambda^k$, the above inequality implies that
\[\pushdil{\gamma_k} = \lambda^k \geq \lambda^{\sqrt{\intnum(\gamma_k)/2\intnum(\mu)}} = \left(\lambda^{\nicefrac{1}{\sqrt{2\intnum(\mu)}}}\right)^{\sqrt{\intnum(\gamma_k)}}.\qedhere\]
\end{proof}

\section{Bounds on least dilatations}\label{sec:upperbounds}

It remains to consider the least dilatations attained in the point-pushing subgroup $\pp(S)$. In this section, we use the tools of \S\ref{sec:trainTracks} to examine two concrete examples on the closed surface $S_g$ of genus $g$. These calculations will prove Theorems~\ref{thm:leastDilatationFixedGenus} and \ref{thm:leastDilatationVaryN}.

\subsection{A low-dilatation example}

Recall from \eqref{eqn:leastDilatation} that $\least(\pp(S)) = \inf\left\{\spec(\pp(S))\right\}$ is the the least element in the spectrum of all entropies of point-pushing pseudo-Anosov homeomorphisms on the surface $S$. Our goal in this subsection is to bound the least dilatation $\least(\pp(S_g))$ on the closed surface of genus $g$ and to estimate its asymptotic dependence on genus. More precisely, we prove the following.

\renewcommand{\thethmref}{\ref{thm:leastDilatationFixedGenus}}
\begin{thmref}
For the closed surface $S_g$ of genus $g\geq 2$, we have
\[ \tfrac{1}{5}\log(2g) \leq \least(\pp(S_g)) < g\log(11).\]
\end{thmref}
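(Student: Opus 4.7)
\bigskip

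\noindent
\emph{Proof proposal.}

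The lower bound is essentially a direct corollary of what has already been established. Since dilatation grows under taking powers, $\least(\pp(S_g))$ is realized by a primitive filling curve $\gamma$. Viewing $\gamma$ as a four-valent graph embedded in $S_g$, the Euler characteristic inequality $-\intnum(\gamma) = \chi(\gamma) \leq \chi(S_g) = 2-2g$ used in the proof of Theorem~\ref{thm:lowerBound} is strict when $S$ is closed, giving $\intnum(\gamma) \geq 2g-1$. Applying Theorem~\ref{thm:generalBounds} then yields $\pushdil{\gamma} \geq (\intnum(\gamma)+1)^{\nicefrac{1}{5}} \geq (2g)^{\nicefrac{1}{5}}$, and taking logarithms finishes the lower bound.

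For the upper bound, the plan is to exhibit an explicit primitive filling curve $\gamma_g \subset S_g$ together with its induced pretrack $\pretrack{\gamma_g}$ (Definition~\ref{defn:inducedPretrack}) and then directly estimate the dilatation via the incidence matrix $M_{\gamma_g}$ of Corollary~\ref{cor:incidenceMatrix}. A natural construction is a symmetric, inductively-defined loop: realize $S_g$ as a chain of $g$ handles joined end-to-end, and let $\gamma_g$ traverse the handles in sequence, picking up a small, fixed number of self-intersections per handle (on the order of $2$ per handle, in line with $\intnum(\gamma_g) \approx 2g-1$). By designing $\gamma_g$ so that its self-intersections are arranged in a regular pattern, the pretrack $\pretrack{\gamma_g}$ will decompose into $g$ nearly-identical ``handle pieces'' joined along a common base region containing the eye, and one checks by Euler characteristic that all complementary components of $\pretrack{\gamma_g}$ have negative Euler index—so $\pretrack{\gamma_g}$ is a genuine train track (or at worst a bigon track).

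The heart of the proof is then to bound the Perron--Frobenius eigenvalue of $M_{\gamma_g}$, which by Lemma~\ref{lem:PerronFrobenius} controls $\pushdil{\gamma_g}$. Naively applying Lemma~\ref{lem:matrixBound} would give only the general bound $9^{\intnum(\gamma_g)}\approx 9^{2g}$, which is much weaker than $11^g$. To get the sharper estimate, the strategy is to exploit the $g$-fold symmetry of $\gamma_g$ and the block structure of the matrices $A_i^{\text{right}}$ and $A_i^{\text{left}}$ in Corollary~\ref{cor:incidenceMatrix}: the per-handle contributions should commute with a cyclic symmetry, allowing $M_{\gamma_g}$ to be block-diagonalized or reduced to a small, fixed-size matrix whose eigenvalues can be computed explicitly. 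Each handle should then contribute a bounded multiplicative factor (at most $11$) to the spectral radius, yielding a strict inequality $\pushdil{\gamma_g} < 11^g$ and verifying the Perron--Frobenius condition along the way.

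The main obstacle is the last step: extracting the precise constant $11$ from the algebra. The general matrix bound in Lemma~\ref{lem:matrixBound} is too lossy because it cascades row-sum estimates across every crossing; one really needs a structural analysis—either diagonalizing the per-handle block directly or finding an explicit positive eigenvector whose components decay geometrically across handles—to beat the naive bound. A secondary issue is verifying that $\pretrack{\gamma_g}$ is a train track rather than merely a pretrack, so that the Perron--Frobenius eigenvalue actually equals $\pushdil{\gamma_g}$ and not merely an upper bound; however, this is a finite combinatorial check once $\gamma_g$ is drawn explicitly.
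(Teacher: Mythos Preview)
Your lower-bound argument is correct and matches the paper's proof exactly.

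For the upper bound, your overall strategy---an explicit curve with repeating per-handle structure, verify the induced pretrack is a train track, bound the incidence matrix---is the right one, but the mechanism you propose for extracting the constant $11$ is not what actually works. The paper realizes $S_g$ as a $(g-1)$-fold \emph{cyclic} cover of $S_2$ (so $g-1$ pieces, not $g$), and lifts $\gamma_0^{g-1}$ where $\gamma_0\subset S_2$ is a specific filling loop with \emph{three} self-intersections. The $12\times 12$ incidence matrix on $S_2$ is computed explicitly as a block matrix $\left(\begin{smallmatrix}A&B\\C&D\end{smallmatrix}\right)$; the $3\times 3$ eye block $A$ is upper-triangular with diagonal $(11,1,1)$, so $A^n$ is available in closed form with dominant entry $11^n$. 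This is where the constant $11$ comes from---it is read off directly from the $S_2$ computation, not obtained by a symmetry reduction.

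The block-diagonalization you suggest does not apply: $\push(\gamma)$ is \emph{not} a lift of $\push(\gamma_0)$ (the basepoint breaks the cyclic symmetry), and $M_\gamma$ is not circulant. What the paper exploits instead is that $M_\gamma$ factors as a product of $g-1$ local matrices, each equal to the identity away from the eye block and one piece block. Multiplying these out gives an explicit block matrix whose entries are $A^j$, $A^jB$, $CA^j$, $CA^jB$, and $D$. The bound then comes from a direct row-sum estimate via Lemma~\ref{lem:PerronFrobenius}: one checks that the first row dominates and sums to less than $\tfrac{2}{5}\cdot 11^g$. No eigenvector or diagonalization is needed---the structural insight is that only the eye block iterates, and its growth rate is exactly $11$.
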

\begin{proof}
As one may easily check, the inequality \eqref{eqn:EulerCharacteristicBound} is strict in the case of a closed surface. Therefore the inequality in Corollary~\ref{cor:boundOnLeast} may be improved to give the lower bound $\least(\pp(S_g)) \geq \tfrac{1}{5}\log(2g)$. To bound $\least(\pp(S_g))$ from above, it suffices to consider an example.

\begin{exmp}\label{ex:fixedSurface}
Consider the surface $S_2$ of genus $2$, and let $\gamma_0\colon[0,1]\to S_2$ be the closed curve illustrated in Figure~\ref{fig:exampleCurvePiece}. Notice that $\gamma_0$ has three self-intersection points and that $S_2\setminus\gamma_0$ is a single topological disk. Applying Definition~\ref{defn:inducedPretrack} to $\gamma_0$ produces the invariant pretrack $\pretrack{\gamma_0}$ shown in Figure~\ref{fig:exampleTrackPiece}. Since the complementary components of $\pretrack{\gamma_0}$ consist of five trigons and one punctured monogon, $\pretrack{\gamma_0}$ is, in fact, a train track.

\begin{figure}
\centering
\subfigure[The filling loop $\gamma_0$.]
{
\labellist\small\hair 2pt
\pinlabel {$\gamma_0$} [br] at 64 108
\pinlabel {$p$} [tr] <0pt,-1pt> at 40 48
\endlabellist
\includegraphics[scale=1.0]{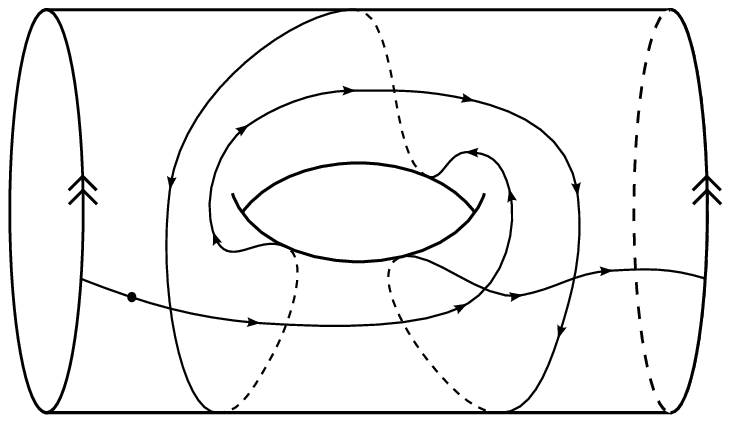}
\label{fig:exampleCurvePiece}
}
\subfigure[The corresponding pretrack $\pretrack{\gamma_0}$.]
{
\labellist\scriptsize\hair 2pt
\pinlabel {$\pretrack{\gamma_0}$} [bl] <0pt,-1pt> at 151 102
\pinlabel {$\drag$} [br]  <2pt,0pt> at 37 51
\pinlabel {$\lside$} [b]  <1pt,-0.5pt> at 48 45
\pinlabel {$\rside$} [tr] <0pt,-0.5pt> at 43 34
\pinlabel {$\ewt{1}$} [r]  <1pt,-1pt> at 61 42
\pinlabel {$\ewt{2}$} [tl] <0pt,-0.5pt> at 69 36
\pinlabel {$\ewt{3}$} [bl] at 67 45
\pinlabel {$\ewt{4}$} [tr] <1pt,0pt> at 137 53
\pinlabel {$\ewt{5}$} [l]  <0pt,1pt> at 144 56
\pinlabel {$\ewt{6}$} [br] <1pt,-1pt> at 140 60
\pinlabel {$\ewt{7}$} [br] <1.2pt,-0.2pt> at 173 54
\pinlabel {$\ewt{8}$} [tl] at 175 50
\pinlabel {$\ewt{9}$} [tr] <1pt,0pt> at 168 49
\endlabellist
\includegraphics[scale=1.0]{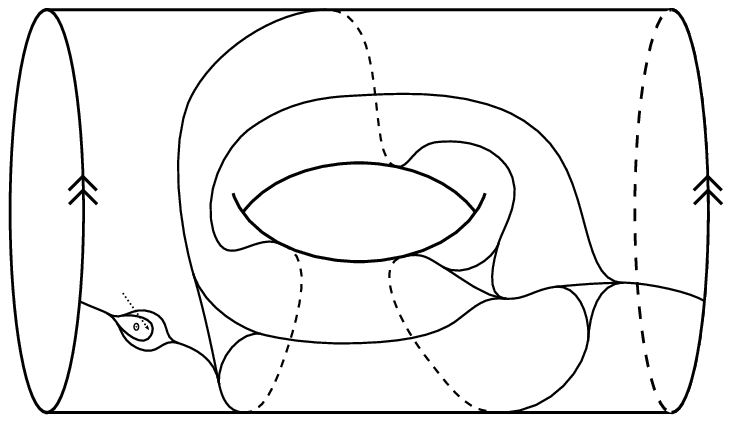}
\label{fig:exampleTrackPiece}
}
\caption{A point-pushing example on $S_2$ (the left and right boundaries are identified).}
\label{fig:examplePiece}
\end{figure}
 
Assigning weights as indicated in Figure~\ref{fig:exampleTrackPiece} determines a weight function on $\pretrack{\gamma_0}$. If $v_e = (\drag,\lside,\rside)$ denotes the vector of weights around the eye and $v = (\ewt{1},\ewt{2},\ewt{3},\ewt{4},\ewt{5},\ewt{6},\ewt{7},\ewt{8},\ewt{9})$ denotes the vector of weights at the intersection points of $\gamma_0$, then Corollary~\ref{cor:incidenceMatrix} implies that the mapping class $\push(\gamma_0)\in \Mod(S_2,p)$ transforms the weight vector $(v_e,v)$ according to the block matrix

\begin{equation}\label{eqn:matrixFormula}
\begin{pmatrix} A & B \\ C & D \end{pmatrix}
=
\begin{pmatrix}
\begin{pmatrix}
11 & 8 & 2\\
0  & 1 & 0\\
0  & 0 & 1\end{pmatrix} &
\begin{pmatrix}
5 & 0 & 5 & 4 & 4 & 0 & 1 & 0 & 1 \\
1 & 1 & 0 & 0 & 0 & 0 & 0 & 0 & 0 \\
0 & 0 & 0 & 0 & 0 & 0 & 1 & 1 & 0 \end{pmatrix}
\vspace{8 pt}\\

\begin{pmatrix}
2  & 2 & 0 \\
2  & 2 & 0 \\
0  & 0 & 0 \\
6  & 4 & 2 \\
2  & 2 & 0 \\
0  & 0 & 0 \\
10 & 8 & 2 \\
6  & 4 & 2 \\
0  & 0 & 0 \end{pmatrix} &
\begin{pmatrix}
2 & 0 & 2 & 2 & 1 & 0 & 0 & 0 & 0 \\
1 & 1 & 0 & 0 & 1 & 0 & 0 & 0 & 0 \\
0 & 0 & 0 & 0 & 0 & 1 & 0 & 0 & 0 \\
2 & 0 & 2 & 2 & 2 & 0 & 2 & 0 & 1 \\
2 & 0 & 2 & 1 & 1 & 0 & 0 & 0 & 1 \\
0 & 0 & 0 & 0 & 0 & 0 & 0 & 1 & 0 \\
6 & 0 & 5 & 3 & 3 & 0 & 2 & 0 & 2 \\
2 & 0 & 3 & 3 & 3 & 0 & 1 & 1 & 0 \\
0 & 1 & 0 & 0 & 0 & 0 & 0 & 0 & 0 \end{pmatrix}
\end{pmatrix}.
\end{equation}


The surface $S_g$ of genus $g$ is a natural $(g-1)$-fold, cyclic cover of $S_2$, which one may visualize as follows: Take $g-1$ copies of the torus with two boundary components, as in Figure~\ref{fig:exampleCurvePiece}, and glue them end-to-end to form a ring; the result is a rotationally symmetric surface with $g-1$ genera cyclically arranged around one central genus. The covering map $S_g \to S_2$ is the quotient by the $\Z/(g-1)\Z$ rotational symmetry. Explicitly, this is the cover corresponding to the kernel of the map
\[\pi_1(S_2)\to H_1(S_2;\Z) \to H_1\left(S_2;\Z/(g-1)\Z\right)\stackrel{\psi}{\to}\Z/(g-1)\Z,\]
where $\psi$ is Poincar\'e dual to the $\Z/(g-1)\Z$ homology class of the curve that is cut along in Figure~\ref{fig:exampleCurvePiece}. We now lift $\gamma_0^{g-1}\in \pi_1(S_2)$ to a filling loop $\gamma$ on $S_g$ and consider the corresponding mapping class $\push(\gamma)\in \pp(S_g)$. As illustrated in Figure~\ref{fig:exampleTrack}, the induced invariant pretrack $\pretrack{\gamma}$ is a train track that cuts $S_g$ into a single punctured monogon and $4g-3$ trigons---one at each of the $3g-3$ self-intersection points of $\gamma$, one for each of the $g-1$ components of $S_g\setminus \gamma$, and one surrounding the punctured monogon. Although the mapping class $\push(\gamma)$ is not a lift of $\push(\gamma_0)$, we note that $\gamma\subset S_g$ is precisely the preimage of $\gamma_0\subset S_2$ and that $\pretrack{\gamma}$ is essentially just the preimage of $\pretrack{\gamma_0}$.

\begin{figure}
\centering
\includegraphics[scale=1.0]{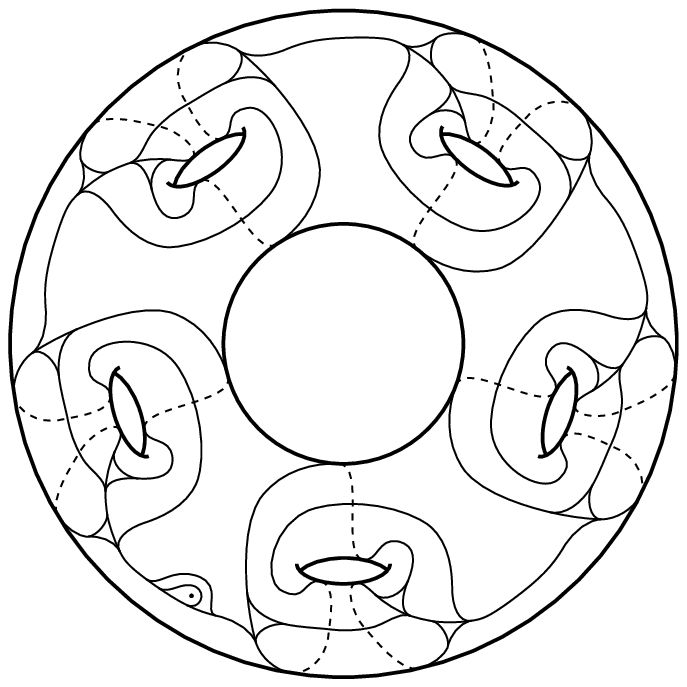}
\caption{The invariant train track $\pretrack{\gamma}$ for $\push(\gamma)\in \Mod(S_g,p)$.}
\label{fig:exampleTrack}
\end{figure}

We now calculate the action of $\push(\gamma)$ on the weight vector
\begin{equation*}
w = (v_e,v_1,v_2,\ldots,v_{g-1}),\quad\text{ where }\quad v_i=(\ewt{1}_i, \ewt{2}_i, \ewt{3}_i, \ewt{4}_i, \ewt{5}_i, \ewt{6}_i, \ewt{7}_i, \ewt{8}_i, \ewt{9}_i)
\end{equation*}
records the weights along the $i^{\text{th}}$ copy of $\gamma_0$ and $v_e = (\drag,\lside,\rside)$ records the weights in the eye of the track. Pushing the marked point along the $i^{\text{th}}$ copy of $\gamma_0$ transforms the weight vectors $v_e$ and $v_i$ according to \eqref{eqn:matrixFormula} and leaves the other vectors $v_j$ unchanged. Therefore, the full action of $\push(\gamma)$ on the weight vector $w$ is given by the block matrix product
\begin{equation}\label{eqn:bigMatrixProduct}
\begin{pmatrix}
A & 0 & 0 & \cdots & B \\
0 & I & 0 & \cdots & 0 \\
0 & 0 & I & \cdots & 0 \\
\vdots & \vdots & \vdots & \ddots & \vdots \\
C & 0 & 0 & \cdots & D
\end{pmatrix}
\cdots
\begin{pmatrix}
A & 0 & B & \cdots & 0 \\
0 & I & 0 & \cdots & 0 \\
C & 0 & D & \cdots & 0 \\
\vdots & \vdots & \vdots & \ddots & \vdots \\
0 & 0 & 0 & \cdots & I
\end{pmatrix}
\begin{pmatrix}
A & B & 0 & \cdots & 0 \\
C & D & 0 & \cdots & 0 \\
0 & 0 & I & \cdots & 0 \\
\vdots & \vdots & \vdots & \ddots & \vdots \\
0 & 0 & 0 & \cdots & I
\end{pmatrix}.
\end{equation}
Multiplying this out, we see that the incidence matrix for the carrying $\push(\gamma)(\pretrack{\gamma})\prec \pretrack{\gamma}$ is
\begin{equation}\label{eqn:bigIncidenceMatrix}
M_\gamma =
\begin{pmatrix}
A^{g-1}  & A^{g-2}B  & A^{g-3}B  & \cdots & A^2B   &  AB &  B \\
C        & D        & 0         & \cdots & 0      & 0  &  0 \\
CA       & CB       & D         & \cdots & 0      & 0  &  0 \\
\vdots   & \vdots   & \vdots    & \ddots & \vdots & \vdots & \vdots \\
CA^{g-3} & CA^{g-4}B & CA^{g-5}B & \cdots & CB     & D  &  0 \\
CA^{g-2} & CA^{g-3}B & CA^{g-4}B & \cdots & CAB    & CB &  D 
\end{pmatrix}.
\end{equation}

An elementary calculation shows that the first row and first column of $M_\gamma^3$ have strictly positive entries.
This implies that $M_\gamma^6$ has strictly positive entries and, consequently, that $M_\gamma$ is a Perron--Frobenius matrix. According to Lemma~\ref{lem:PerronFrobenius}, it now follows that the dilatation $\lambda_\gamma$ of $\push(\gamma)$ is bounded above by the largest row sum of $M_\gamma$. Using the fact that 
\begin{equation}\label{eqn:MtoTheN}
A^n = \begin{pmatrix}
11^n & 8q(n) & 2q(n)\\
0 & 1 & 0\\
0 & 0 & 1
\end{pmatrix},\quad\text{where} \quad q(n) = \sum_{j=0}^{n-1} 11^{j} = \frac{11^{n}-1}{11-1} \leq \frac{1}{10}11^{n},
\end{equation}
a direct comparison of the relevant matrix blocks shows that the first row of $M_\gamma$ has the largest sum. Indeed, since the first row of $A^{n-1}B$ sums to $20q(n)$, we may easily calculate the first row sum of $M_\gamma$ and conclude that
\begin{align*}
\lambda_{\gamma} \leq& 11^{g-1} + 10q(g-1) + \sum_{j=1}^{g-1}20q(j)\\
\leq& 11^{g-1} + 11^{g-1} + 2\sum_{j=1}^{g-1}11^{j}\\
=& 2\left(11^{g-1}\right) + 2\left(\tfrac{11^g -1}{11-1} - 1\right)\\
<& \tfrac{2}{5}11^{g}.
\end{align*}
Since $\push(\gamma)\in \pp(S_g)$, this example shows that
\[\least(\pp(S_g)) \leq \log(\lambda_\gamma) < g\log(11)\]
and completes the proof of Theorem~\ref{thm:leastDilatationFixedGenus}.\qedhere
\end{exmp}
\end{proof}

\subsection{Least dilatation vs. self-intersection number}

In keeping with the theme that the geometric structure of $\gamma$ controls the dynamical complexity of $\push(\gamma)$, we now refine our investigation of least dilatations to account for the dependence on self-intersection number. Accordingly, we restrict our attention to the filtration
\begin{equation*}
\pp_k(S) = \left\{\push(\gamma)\;\middle\vert\; \text{$\gamma\in \pi_1(S)$ with $\intnum(\gamma)=k$}\right\}
\end{equation*}
of $\pp(S)$ and strive to understand the least pseudo-Anosov dilatation achieved by pushing around a curve with prescribed self-intersection number. While Theorem~\ref{thm:generalBounds} gives general upper and lower bounds on the whole spectrum $\spec(\pp_k(S))$, the following theorem establishes a better upper bound on the least dilatation in $\pp_k(S_g)$ and proves that, asymptotically, $\least(\pp_k(S_g))$ grows like $\log(k)$.

\renewcommand{\thethmref}{\ref{thm:leastDilatationVaryN}}
\begin{thmref}
Let $S_g$ be a closed surface of genus $g\geq 3$. For any integer $k \geq 3g-1$, we have that
\[\tfrac{1}{5}\log(k+1) \leq \least(\pp_k(S_g)) < \log(k) + g\log(11).\]
\end{thmref}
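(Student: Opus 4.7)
The lower bound is immediate from Theorem~\ref{thm:generalBounds}: every element of $\pp_k(S_g)$ has the form $\push(\gamma)$ for some filling loop $\gamma \in \pi_1(S_g)$ with $\intnum(\gamma) = k$, so $\pushdil{\gamma} \geq (k+1)^{1/5}$ and consequently $\log\pushdil{\gamma} \geq \tfrac{1}{5}\log(k+1)$.

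For the upper bound, my plan is to exhibit, for each $k \geq 3g-1$, an explicit filling curve $\gamma_k$ on $S_g$ with $\intnum(\gamma_k) = k$ and $\pushdil{\gamma_k} < k \cdot 11^g$, by extending the construction of Example~\ref{ex:fixedSurface}. That example produces a filling curve $\gamma$ with $\intnum(\gamma) = 3g-3$ whose invariant train track $\pretrack{\gamma}$ has incidence matrix of largest row sum less than $\tfrac{2}{5}\cdot 11^g$. Writing $k = (3g-3)+n$ with $n\geq 2$, I would obtain $\gamma_k$ from $\gamma$ by selecting a coordinate disk $D$ centered at a point $y$ on a branch of $\gamma$ (chosen away from the basepoint and from the existing self-crossings) and replacing $\gamma \cap D$ with an arc forming $n$ consecutive small ``curls,'' each contributing exactly one transverse double-point. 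The resulting curve $\gamma_k$ still fills $S_g$, because the modification is local and does not change which essential simple closed curves $\gamma$ must meet, and it realizes $\intnum(\gamma_k) = k$. By Definition~\ref{defn:inducedPretrack} and Proposition~\ref{prop:invariantPretrack}, $\pretrack{\gamma_k}$ is invariant under $\push(\gamma_k)$; it agrees with $\pretrack{\gamma}$ outside of $D$ and acquires $n$ additional trigons inside $D$, so it remains a genuine train track.

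The main obstacle is bounding the largest row sum of the incidence matrix $M_{\gamma_k}$, which by Corollary~\ref{cor:incidenceMatrix} is a product of $2k$ elementary matrices $A_j^{\mathcal{O}}$. A direct application of Lemma~\ref{lem:matrixBound} gives only the exponential bound $3^{2k}$, which is far too large. The key would be to arrange the curls so that the $2n$ new elementary factors have a chain-like structure: each inserted crossing $q$ is such that its branch of $\gamma_k$ only interacts with the weights of its immediate neighbors inside $D$, not with the main coordinates $(v_e, v_1, \dots, v_{3g-3})$ outside $D$. Under this arrangement, the new factors act near-triangularly on the new weights, so their cumulative contribution to the largest row sum grows \emph{additively} in $n$ rather than multiplicatively: each curl should add $O(11^g)$ to the eye-row sum (from the marked point passing through the new crossing) and $O(1)$ to the new weight rows. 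Summing these contributions with the original bound of $\tfrac{2}{5}\cdot 11^g$ would yield a total largest row sum bounded by $k\cdot 11^g$. Finally, verifying the Perron--Frobenius hypothesis of Lemma~\ref{lem:PerronFrobenius} would proceed as in Example~\ref{ex:fixedSurface}: one shows that a low power of $M_{\gamma_k}$ has strictly positive first row and first column, using the filling property of $\gamma_k$, so that $\pushdil{\gamma_k}$ is indeed bounded by this largest row sum.
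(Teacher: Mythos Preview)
Your lower bound argument is fine and matches the paper. The upper bound proposal, however, has two genuine gaps.

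First, adding $n$ small curls inside a disk $D$ does \emph{not} increase the self-intersection number. By Definition~\ref{defn:selfIntNum}, $\intnum(\gamma_k)$ is the minimum number of double points over the free homotopy class, and each curl bounding a disk can be removed by a local homotopy. Thus your $\gamma_k$ is freely homotopic to $\gamma$ and still has $\intnum(\gamma_k)=3g-3$, so it does not lie in $\pp_k(S_g)$ at all. To actually raise $\intnum$, the extra winding must go around something essential; this is why the paper winds $\mu_0$ around a handle of $S_2$ rather than inserting local curls.

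Second, even setting aside the homotopy issue, your claim that $\pretrack{\gamma_k}$ ``remains a genuine train track'' is false. Look at the local pretrack of Figure~\ref{fig:localPretrack}: at each self-intersection point the inbound quadrant keeps a cusp and the other three quadrants acquire smooth corners. The interior of a curl is a disk occupying one of these quadrants, and after the pretrack construction it becomes a complementary monogon or nullgon---both of which have nonnegative Euler index. So $\pretrack{\gamma_k}$ is only a pretrack, Lemma~\ref{lem:PerronFrobenius} does not apply, and you get no bound on $\pushdil{\gamma_k}$ from the incidence matrix. The paper runs into exactly this obstacle even with the essential winding (``The winding structure of $\mu_0$ creates complementary monogons in the corresponding pretrack $\pretrack{\mu_0}$\dots'') and resolves it by abandoning the induced pretrack entirely: it builds a modified bigon track $\sigma_0$ by hand, then proves invariance of $\sigma_0$ under $\push(\mu_0)$ via an explicit sequence of seventeen carrying moves (Figures~\ref{fig:crazyCarrying1}--\ref{fig:crazyCarrying4}), and finally computes the resulting incidence matrix directly. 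Your proposal has no substitute for this work, and the ``near-triangular'' heuristic for the row sums cannot even get started without a valid invariant (bigon) track to which Lemma~\ref{lem:PerronFrobenius} applies.
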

\begin{proof}
The lower bound is a direct consequence of Theorem~\ref{thm:generalBounds}; the upper bound results from the following family of examples.

\begin{exmp}\label{ex:changeWinding}
For fixed numbers $n\geq 2$ and $g \geq 3$, we will construct a filling curve on $S_g$ whose self-intersection number depends on $n$. As in Example~\ref{ex:fixedSurface}, our construction uses the cyclic covering $S_g\to S_2$ and the filling curve $\gamma_0\subset S_2$ shown in Figure~\ref{fig:exampleCurvePiece}. To adjust the self-intersection number, we also consider the modified loop $\mu_0\colon[0,1]\to S_2$ illustrated in Figure~\ref{fig:windingCurvePiece}---$\mu_0$ is identical to $\gamma_0$ except in that it winds around a handle of $S_2$ an additional $n$ times; as such, it has self-intersection number $\intnum(\mu_0) = n+3$.

\begin{figure}
\centering
\subfigure[The loop $\mu_0$ winds around a handle of $S_2$.]
{
\labellist\small\hair 2pt
\pinlabel {$\mu_0$} [br] at 56 103
\pinlabel {$p$} [tr] <0pt,-1pt> at 40 49
\pinlabel {$n$-times} [b] <0pt,0.5pt> at 109 141
\endlabellist
\includegraphics[scale=1.0]{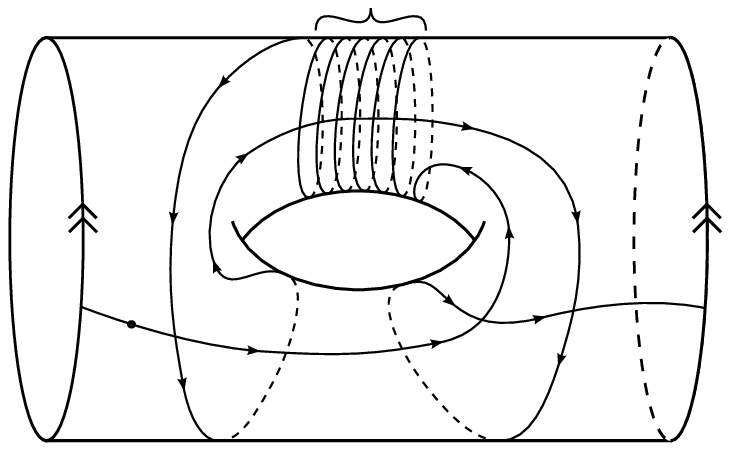}
\label{fig:windingCurvePiece}
}
\subfigure[The track $\sigma_0$ is invariant under $\push(\mu_0)$.]
{
\labellist\scriptsize\hair 2pt
\pinlabel {$\sigma_0$} [bl] <1pt,-1pt> at 154 93
\pinlabel {$\drag$}    [br] <2pt,-1pt> at 37 53
\pinlabel {$\lside$}   [bl] <0pt,-1pt> at 48 45
\pinlabel {$\rside$}   [tr] at 43 34
\pinlabel {$\ewt{1}$}  [r]  <0.5pt,0pt> at 62 40
\pinlabel {$\ewt{2}$}  [tl] <-1pt,-1pt> at 70 36
\pinlabel {$\ewt{3}$}  [bl] <0pt,-1pt> at 68 45
\pinlabel {$\ewt{4}$}  [tr] <1pt,-0.5pt> at 139 52
\pinlabel {$\ewt{5}$}  [l]  <0pt,0.5pt> at 145 56
\pinlabel {$\ewt{6}$}  [br] <1pt,0pt> at 139 58
\pinlabel {$\ewt{7}$}  [br] <1pt,-0.5pt> at 172 56
\pinlabel {$\ewt{8}$}  [tl] <-1pt,-1pt> at 175 53
\pinlabel {$\ewt{9}$}  [tr] <0pt,-1pt> at 167 50
\pinlabel {$\ewt{10}$} [tr] at 92 95
\pinlabel {$\ewt{11}$} [t]  <0.5pt,-0.5pt> at 103 100
\pinlabel {$\ewt{12}$} [l]  <0.5pt,0pt> at 110 116
\pinlabel {$\ewt{13}$} [tr] <0.5pt,0pt> at 107 113
\pinlabel {$\ewt{14}$} [br] <0.5pt,-0.5pt> at 106 120
\endlabellist
\includegraphics[scale=1.0]{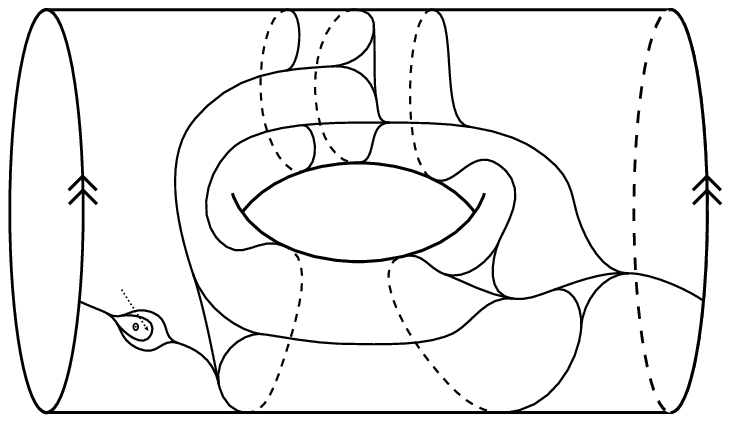}
\label{fig:windingTrackPiece}
}
\caption{A more complicated example on $S_2$.}
\label{fig:windingPieces}
\end{figure}

The winding structure of $\mu_0$ creates complementary monogons in the corresponding pretrack $\pretrack{\mu_0}\subset S_2$, so we instead consider the modified track $\sigma_0\subset S_2$ depicted in Figure~\ref{fig:windingTrackPiece}. This track cuts $S_2$ into six trigons, two bigons, one monogon, and one punctured monogon. Notice that $\sigma_0$ has the same basic structure of $\pretrack{\gamma_0}$ from Figure~\ref{fig:exampleTrackPiece}, but with additional branches to deal with the winding in $\mu_0$.

\begin{claim}\label{claim:anInvariantTrack}
The track $\sigma_0\subset S_2$ is invariant under $\push(\mu_0)$, that is, $\push(\mu_0)(\sigma_0)\prec \sigma_0$.
\end{claim}
\begin{proof}
The proof is in the same spirit as Proposition~\ref{prop:invariantPretrack}: as the marked point travels along $\mu_0$, it pushes the track $\sigma_0$ out of its way while we continually use isotopy and carrying moves (c.f. Figure~\ref{fig:moves}) to simplify the picture. After pushing the marked point once around $\mu_0$, the resulting track will be identical to $\sigma_0$.

Let $C\subset S_2$ be a compact cylinder containing all of the extra winding in the path $\mu_0$ so that the paths $\gamma_0$ and $\mu_0$ and tracks $\pretrack{\gamma_0}$ and $\sigma_0$ both agree on $X = S_2\setminus C$. It follows that, while the marked point is in $X$, we may apply the same simplifying moves as in the carrying $\push(\gamma_0)(\pretrack{\gamma_0})\prec \pretrack{\gamma_0}$ from Proposition~\ref{prop:invariantPretrack}. Namely, when the marked point travels along an edge of $\mu_0\cap X$, the eye of the track simply slides along the corresponding branch of $\sigma_0\cap X$, and when the marked point pushes through a self-intersection point in $X$, we perform the usual carrying from Figure~\ref{fig:invarianceAtIntersection}.

The only difficulty, then, is to navigate the portions of $\mu_0$ that lie in $C$; once this is done, the claim will follow. Notice that, in its journey around $\mu_0$, the marked point interacts with $C$ exactly twice. The first time, it winds $n$ times around $C$ as it progresses from the right boundary component to the left; the second time, the marked point simply traverses $C$ from left to right. The necessary carryings for these sections are a bit involved, so we demonstrate them explicitly in a sequence of $17$ snapshots spanning Figures~\ref{fig:crazyCarrying1} through \ref{fig:crazyCarrying4}. In the figures, the cylinder $C$ is depicted as a rectangle with top and bottom edges identified. 

\begin{figure}
\centering
\labellist\small\hair 8pt
\pinlabel {$1$} [bl] at 40 119
\pinlabel {$2$} [bl] at 41 37
\pinlabel {$3$} [bl] at 230 119
\pinlabel {$4$} [bl] at 230 37
\endlabellist
\includegraphics[scale=1.0]{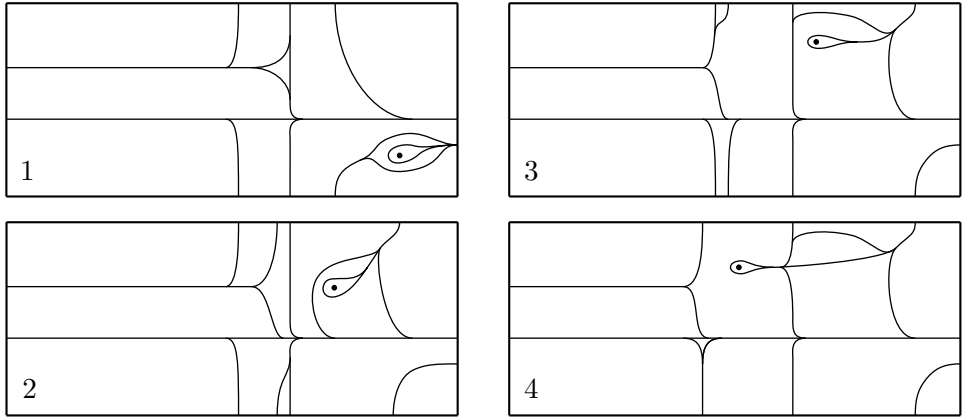}
\caption{The carrying moves for navigating a winding path -- part 1.}
\label{fig:crazyCarrying1}
\end{figure}

The first time the marked point crosses $C$ is illustrated in Figures~\ref{fig:crazyCarrying1} through \ref{fig:crazyCarrying3}. Frame $1$ shows the initial situation after the marked point, surrounded by the eye of the track, enters $C$ from the right. In frames $2$ and $3$, the marked point passes around the back of $C$, and we use a handful of slide moves to pull branches of the track apart. In frame $4$, the marked point pushes through a branch of the track, which we then pinch and collapse onto the monogon containing the marked point.

\begin{figure}
\centering
\labellist\small\hair 8pt
\pinlabel {$5$} [bl] at 40 112
\pinlabel {$6$} [bl] at 40 29
\pinlabel {$7$} [bl] at 229 112
\pinlabel {$8$} [bl] at 229 29
\endlabellist
\includegraphics[scale=1.0]{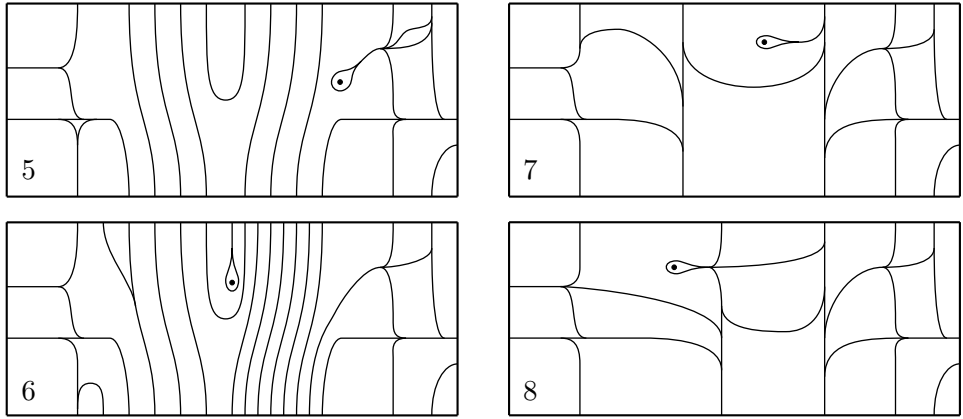}
\caption{The carrying moves for navigating a winding path -- part 2.}
\label{fig:crazyCarrying2}
\end{figure}

The next step is for the marked point to wind around $C$; however, the long horizontal branch of the track is blocking the way. In frame $5$, we clear a path for the marked point by twisting this branch around $C$ so that, in frame $6$, the marked point is able to wind $n$ times around $C$ by moving along the space between these twistings. In frame $7$, we pinch many of these branches together and collapse the resulting bigons. The marked point then pushes through another branch of the track in frame $8$. 

\begin{figure}
\centering
\labellist\small\hair 8pt
\pinlabel {$9$}  [bl] at 39 115
\pinlabel {$10$} [bl] at 39 33
\pinlabel {$11$} [bl] at 228 115
\pinlabel {$12$} [bl] at 228 33
\endlabellist
\includegraphics[scale=1.0]{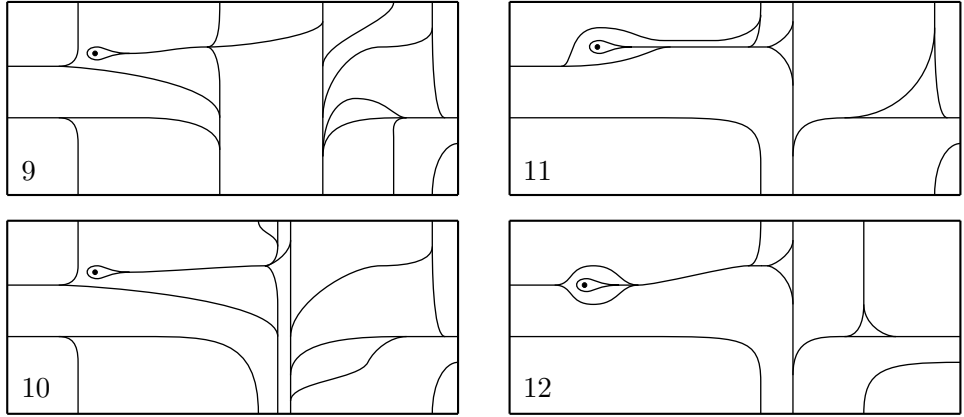}
\caption{The carrying moves for navigating a winding path -- part 3.}
\label{fig:crazyCarrying3}
\end{figure}

All that remains is to repackage the branches into an orderly configuration. In frame $9$, we collapse two bigons and use slide moves to pull apart some of the branches on the right side of $C$. In frames $10$ and $11$, we collapse two more bigons and use slides to combine several branches together. Frame $12$ shows the eye of the track reformed and ready to exit through the left boundary component of $C$.

\begin{figure}
\centering
\labellist\small\hair 8pt
\pinlabel {$13$} [bl] at 16 116
\pinlabel {$14$} [bl] at 16 33
\pinlabel {$15$} [bl] at 142 116
\pinlabel {$16$} [bl] at 142 33
\pinlabel {$17$} [bl] at 268 74
\endlabellist
\includegraphics[scale=1.0]{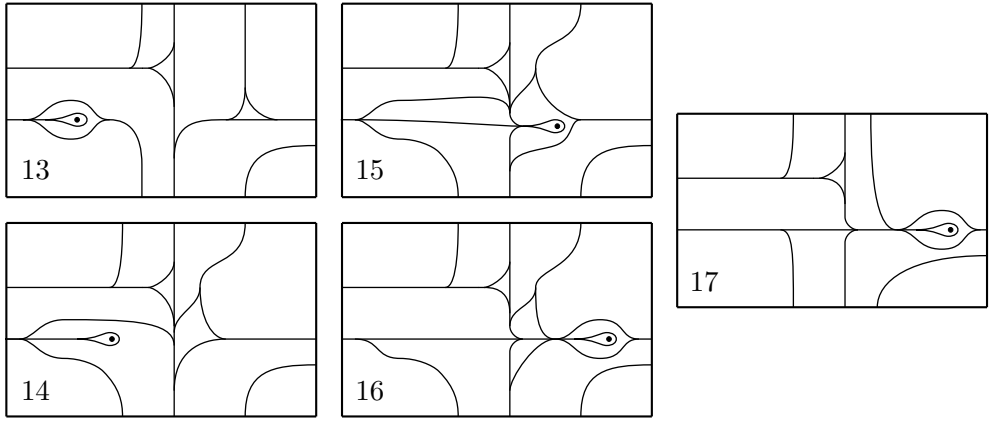}
\caption{The carrying moves for navigating a winding path -- part 4.}
\label{fig:crazyCarrying4}
\end{figure}

The second time the marked point crosses $C$ is illustrated in Figure~\ref{fig:crazyCarrying4}, with the initial configuration depicted in frame $13$. In frame $14$, we use slide moves to pull apart branches and make room for the marked point to move forward. In frame $15$, the marked point pushes through the vertical branch, and in frame $16$, a bigon is collapsed while branches are pinched to form an eye around the marked point. Upon collapsing two more bigons, frame $17$ shows the eye of the track ready to exit $C$, leaving the branches in $C$ exactly how they started in frame $1$. This completes the proof that $\push(\mu_0)(\sigma_0) \prec \sigma_0$.
\end{proof}

As in Example~\ref{ex:fixedSurface}, a weight function on $\sigma_0$ is determined by the weight vectors
\[v_e=(\drag,\lside,\rside)\quad \text{ and } \quad
v = (\ewt{1},\ewt{2},\ewt{3},\ewt{4},\ewt{5},\ewt{6},\ewt{7}, \ewt{8},\ewt{9},\ewt{10},\ewt{11},\ewt{12},\ewt{13},\ewt{14})\]
that are indicated in Figure~\ref{fig:windingTrackPiece}. By carefully keeping track of these weights throughout the carryings in Figures~\ref{fig:invarianceAtIntersection} and \ref{fig:crazyCarrying1}--\ref{fig:crazyCarrying4}, one finds that pushing the marked point around $\mu_0$ transforms the weight vector $(v_e,v)$ according to the block matrix

\begin{equation}\label{eqn:newMatrixFormula}
\begin{pmatrix}
\tilde{A} & \tilde{B}\\
\tilde{C} & \tilde{D}
\end{pmatrix}
=
\begin{pmatrix}
\left(\begin{smallmatrix}
6n+11 & 6n+11 & 0 \\
0     & 0     & 0 \\
0     & 0     & 0 \\
\end{smallmatrix}\right) & \hspace{-10pt}
\left(\begin{smallmatrix}
6n+8 & 3 & 6n+5 & 6n+4 & 6n+3 & 0 & 1 & 0 & 1 & 3 & 9n-4 & 6n+4 & 9n-1 & 9n-7 \\
0    & 0 & 0    & 0    & 0    & 1 & 0 & 0 & 0 & 0 & 0    & 0    & 0    & 0    \\
0    & 0 & 0    & 0    & 1    & 0 & 1 & 1 & 0 & 0 & 1    & 2    & 1    & 1    \\
\end{smallmatrix}\right)
\vspace{6pt} \\
\left(\begin{smallmatrix}
 2     & 2     & 0 \\
 2     & 2     & 0 \\
 0     & 0     & 0 \\
 4n+6  & 4n+6  & 0 \\
 2     & 2     & 0 \\
 0     & 0     & 0 \\
 4n+10 & 4n+10 & 0 \\
 4n+6  & 4n+6  & 0 \\
 0     & 0     & 0 \\
 0     & 0     & 1 \\
 6     & 5     & 0 \\
 2n-2  & 2n-2  & 0 \\
 0     & 0     & 0 \\
 2     & 3     & 0
\end{smallmatrix}\right) & \hspace{-10pt}
\left(\begin{smallmatrix}
2    & 0 & 2    & 2    & 2    & 0 & 0 & 0 & 0 & 1 & 2n   & 2     & 2n  & 2n-2 \\
1    & 1 & 0    & 0    & 0    & 0 & 0 & 0 & 0 & 1 & 0    & 0     & 0   & 0    \\
0    & 0 & 0    & 0    & 0    & 0 & 0 & 0 & 0 & 0 & 0    & 0     & 1   & 1    \\
4n+4 & 2 & 4n+2 & 4n+2 & 4n+2 & 0 & 2 & 0 & 1 & 2 & 6n-2 & 4n+4  & 6n  & 6n-4 \\
2    & 0 & 2    & 1    & 1    & 0 & 0 & 0 & 1 & 0 & 0    & 0     & 0   & 0    \\
0    & 0 & 0    & 0    & 0    & 0 & 0 & 1 & 0 & 0 & 0    & 0     & 0   & 0    \\
4n+8 & 2 & 4n+5 & 4n+3 & 4n+3 & 0 & 2 & 0 & 2 & 2 & 6n-2 & 4n+4  & 6n  & 6n-4 \\
4n+4 & 2 & 4n+3 & 4n+3 & 4n+3 & 0 & 1 & 1 & 0 & 2 & 6n-2 & 4n+4  & 6n  & 6n-4 \\
0    & 1 & 0    & 0    & 0    & 0 & 0 & 0 & 0 & 0 & 0    & 0     & 0   & 0    \\
0    & 0 & 0    & 0    & 0    & 0 & 0 & 0 & 0 & 0 & 0    & 0     & 0   & 0    \\
3    & 1 & 2    & 2    & 2    & 0 & 0 & 0 & 0 & 2 & 2n   & 2     & 2n  & 2n-2 \\
2n-2 & 0 & 2n-2 & 2n-2 & 2n-2 & 0 & 0 & 0 & 0 & 0 & n-1  & 2n-1  & n-1 & n-1  \\
0    & 0 & 0    & 0    & 0    & 0 & 0 & 0 & 0 & 0 & n-1  & 0     & n   & n-1  \\
3    & 1 & 2    & 2    & 2    & 0 & 0 & 0 & 0 & 0 & n    & 2     & n+1 & n      
\end{smallmatrix}\right)
\end{pmatrix}.
\end{equation}

\begin{figure}
\centering
\includegraphics[scale=1.0]{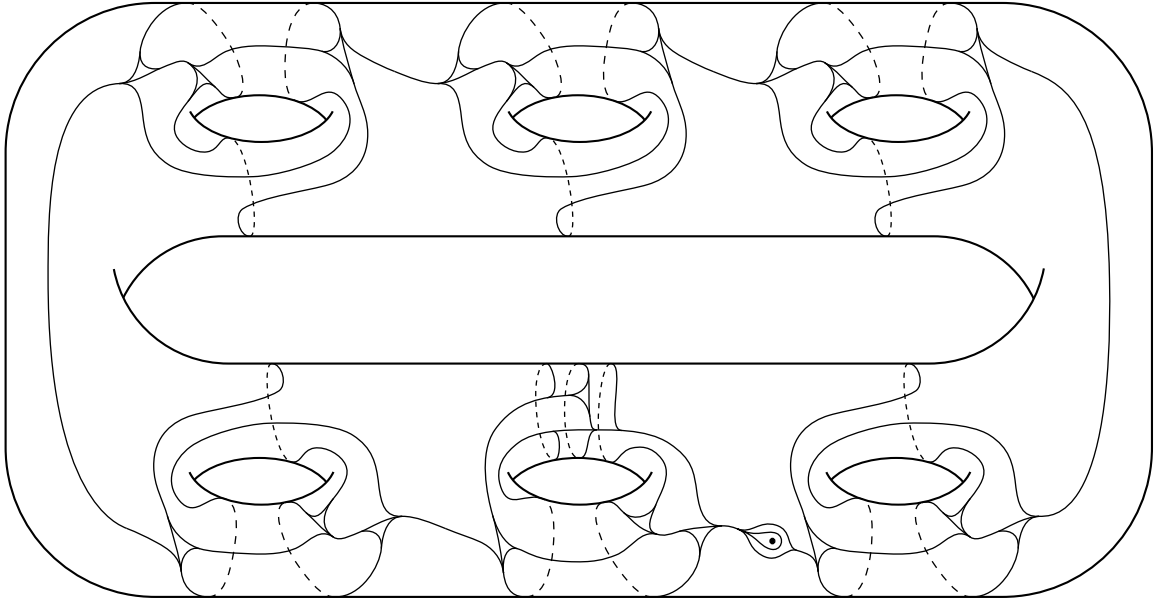}
\caption{The bigon track $\sigma$ is invariant under $\push(\mu)\in\Mod(S,p)$.}
\label{fig:windingExampleTrack}
\end{figure}

We are now ready to construct the example. Using the cyclic cover, $S_g\to S_2$, the loop $\gamma_0^{g-2}\mu_0\in\pi_1(S_2)$ lifts to a filling loop $\mu\subset S_g$. Alternately, if $T$ denotes the torus with two boundary components, then $S_g$ may be attained by gluing $g-1$ copies of $T$ together to form a ring, and $\mu\subset S_g$ may be constructed by concatenating copies of the arc $\gamma_0\subset T$ along the first $g-2$ copies of $T$ with the arc $\mu_0\subset T$ on the last copy of $T$. Note that $\intnum(\mu) = 3(g-1)+n$. Similarly, we may build an invariant track $\sigma\subset S_g$ for $\push(\mu)\in \Mod(S_g,p)$ by concatenating $g-2$ copies of $\pretrack{\gamma_0}\subset T$ with one copy of $\sigma_0\subset T$. As shown in Figure~\ref{fig:windingExampleTrack}, $\sigma$ is a bigon track that that cuts $S_g$ into $(2+ 3(g-1)+(g-3) +1)$ trigons, four bigons, and one punctured monogon (each of the $g-1$ pieces of $\sigma$ contributes three trigons, the copy of $\sigma_0$ contributes two additional trigons and two bigons, the eye contributes a trigon and a punctured monogon, and the junctions between the pieces form either bigons or trigons).

The track $\sigma$ allows us to estimate the dilatation of $\push(\mu)$ as follows. The action of $\push(\mu)$ on the weight space of $\sigma$ is determined by a matrix product that is analogous to the one in \eqref{eqn:bigMatrixProduct}. Using \eqref{eqn:newMatrixFormula} together with \eqref{eqn:bigIncidenceMatrix}, we find that the incidence matrix $M_\mu$ for the carrying $\push(\mu)(\sigma)\prec \sigma$ is given by the block matrix product
\begin{align*}
\begin{pmatrix}
\tilde{A} & 0 & 0 & \cdots & 0 & \tilde{B} \\
0         & I & 0 & \cdots & 0 & 0         \\
0         & 0 & I & \cdots & 0 & 0         \\
\vdots    & \vdots & \vdots & \ddots & \vdots & \vdots \\
0         & 0 & 0 & \cdots & I & 0         \\ 
\tilde{C} & 0 & 0 & \cdots & 0 & \tilde{D} 
\end{pmatrix}
\begin{pmatrix}
A^{g-2}   & A^{g-3}B  & A^{g-4}B  & \cdots & AB  & B      & 0 \\
C        & D         & 0         & \cdots & 0   & 0      & 0  \\
CA       & CB        & D         & \cdots & 0   & 0      & 0  \\
\vdots   & \vdots    & \vdots    &\ddots &\vdots&\vdots & \vdots \\
CA^{g-3}  & CA^{g-4}B & CA^{g-5}B & \cdots & CB  & D      & 0 \\
0        & 0         & 0         &  0     & 0   & 0      & I 
\end{pmatrix},
\end{align*}
which multiplies to give 
\begin{align*} 
M_\mu = \begin{pmatrix}
\tilde{A}A^{g-2}   & \tilde{A}A^{g-3}B  & \tilde{A}A^{g-4}B  & \cdots & \tilde{A}AB &  \tilde{A}B & \tilde{B} \\
C   & D         & 0         & \cdots & 0  &  0 & 0  \\
CA       & CB        & D         & \cdots & 0  &  0 & 0  \\
\vdots   & \vdots    & \vdots    & \ddots & \vdots & \vdots & \vdots \\
CA^{g-3}  & CA^{g-4}B & CA^{g-5}B & \cdots & CB &  D & 0 \\
\tilde{C}A^{g-2}   & \tilde{C}A^{g-3}B  & \tilde{C}A^{g-4}B  & \cdots & \tilde{C}AB &  \tilde{C}B & \tilde{D} \\
\end{pmatrix}.
\end{align*}

As before, one may show that $M_\mu$ is Perron--Frobenius by checking that the first row and first column of $M_\mu^3$ have strictly positive entries. Lemma~\ref{lem:PerronFrobenius} then implies that the dilatation of $\push(\mu)$ is bounded above by the sum of the entries in the largest row of $M_\mu$, which is evidently the first. Making use of \eqref{eqn:MtoTheN}, we see that the first row of $\tilde{A}A^j$ has sum $(6n+11)(11^j+10q(j)+1)$ and that the first row of $\tilde{A}A^{j}B$ sums to $(6n+11)(20q(j+1)+2)$. Since the first row of $\tilde{B}$ sums to $57n+20$, we find that the first row of $M_{\mu}$ has total sum
\begin{align*}
(6n+11)&(11^{g-2}+10q(g-2)+1) + (57n+20) + \sum_{j=1}^{g-2}(6n+11)(20q(j)+2)\\
&\leq (57n+20)+(6n+11)\left[11^{g-2} + 11^{g-2}+ 1 + \left(\sum_{j=1}^{g-2}2\cdot11^{j}\right) + 2(g-2)\right]\\
&\leq (57n+10n)+(6n+6n)\left[2(11^{g-2}) +2\left(\frac{11^{g-1}-1}{11-1} -1\right) + 2g-3 \right]\\
&\leq (67n)+(12n)\left[\tfrac{2}{11}11^{g-1} + \tfrac{2}{10}11^{g-1} + \tfrac{2}{11}11^{g-1} \right]\\
&\leq \left(11^{g-1}n\right)+(12n)\left[\tfrac{3}{5}11^{g-1}\right]\\
&< n 11^{g}.
\end{align*}
Since $\intnum(\mu) = 3(g-1) + n \geq n$, it follows that $\pushdil{\mu} < \intnum(\mu)11^{g}$. This construction shows that for each $k \geq 3g-1$ in the statement of Theorem~\ref{thm:leastDilatationVaryN}, we may take $n = k-3(g-1)\geq 2$ and produce a filling curve $\mu\subset S_g$ with $\intnum(\mu) = k$ whose corresponding mapping class $\push(\mu)$ has dilatation at most $k 11^g$. Since $\push(\mu)\in \pp_k(S_g)$ this shows that
\[\least(\pp_k(S_g)) < \log(k) + g\log(11)\]
and completes the proof of Theorem~\ref{thm:leastDilatationVaryN}.\qedhere
\end{exmp}
\end{proof}

{\small
\bibliography{surface_topology}{}
\bibliographystyle{alphanum}}

\bigskip

\noindent
Department of Mathematics\\
University of Illinois at Urbana-Champaign\\
1409 W. Green Street\\
Urbana, IL 61801\\
E-mail: {\tt dowdall@illinois.edu}

\end{document}